\numberwithin{equation}{section}
\theoremstyle{plain}
\newtheorem{thm}{Theorem}[section]
\newtheorem{prop}[thm]{Proposition}
\theoremstyle{definition}
\newtheorem{defn}[thm]{Definition}
\theoremstyle{remark}
\newtheorem{rem}[thm]{Remark}
\title[The embedded Nash problem of birational model of rational triple singularities]{The embedded Nash problem of birational models of rational triple singularities}
\author{B.Karadeniz}
\address{Department of Mathematics,  Gebze Technical University\\  Gebze 41400, Kocaeli, Turkey}
 \email{busrakaradeniz@gtu.edu.tr }
 \author{H. Mourtada}
\address{Universit\'e de Paris, Sorbonne Universit\'e, CNRS \\
Institut Math\'ematiques de Jussieu-Paris Rive Gauche\\
F-75013 Paris, France\\}
 \email{hussein.mourtada@imj-prg.fr }
 \author{C.Pl\'enat}
\address{Aix Marseille Univ, CNRS, Centrale Marseille, I2M\\
CMI, Technop\^ole Ch\^ateau-Gombert\\
39, rue F. Joliot Curie, 13453 Marseille Cedex 13\\
}
 \email{camille.plenat@univ-amu.fr}
\author{M. Tosun}
\address{Department of Mathematics, Galatasaray University\\  Ortak{\"o}y 34357,  Istanbul, Turkey}
\email{mtosun@gsu.edu.tr}
\thanks{This work is supported by the projects "PHC Bosphore no.42613UE", "TUBITAK no.118F320" and 
"Galatasaray University no.16.504.001BAP". The second named author is also partially supported by Project ANR-LISA-17-CE40-0023.}
\subjclass[2000]{58K20}
\begin{document}

\maketitle 

\tableofcontents

\section{Introduction}

\noindent Given a variety $X$ defined over an algebraically closed field of characteristic $0,$  we are often not able to exhibit an explicit resolution of its singularities; on the other hand there are infinitely many resolutions of singularities of $X$ giving extra information which is not intrinsic to the singularity. The need for understanding the information which is common to all the  resolutions of singularities of a given space $X$ led Nash (in \cite{N}) to study the arc space of $X$. See also  \cite{DeF,PS} for more details. This paper follows this line of thoughts. The difference here is that we are interested in the embedded resolutions of singularities of $X\subset \mathbb A^n.$ For this purpose, we replace the arc space $X_\infty$  of $X$ with the jet schemes of $X$: the arc space $X_\infty$ of $X$ is the space of germs of formal curves drawn on $X$. The jet schemes are a family of finite dimensional schemes indexed by integers which approximate the infinite dimensional arc space; for $m\in \textbf{N},$ the $m$-th jet scheme $X_m$  of $X$, can be thought {of} as the space of arcs in the ambient space $\mathbb A^n$ whose "contact" with $X$ is greater or equal to $m+1;$ this gives the intuition  why these schemes should detect information about embedded resolutions of singularities. The main question considered in this paper is: can we construct an embedded resolution of singularities from the jet schemes of $X\subset \mathbb A^n$? More precisely, we ask the following much less optimistic question: 
\begin{center}
\textit{ $(\star)$ Can one construct an embedded resolution of singularities of $X\subset \mathbb A^n$ from the irreducible components of the space $X_m^{Sing}$ of jets  centered at the singular locus of $X\subset \mathbb A^n$?} 
\end{center} 

          \noindent This question is studied in \cite{Mo5,Mo4,LMR,MP}. In \cite{MP}, the authors proved that the irreducible components of the jet schemes centered at the singular locus of a rational double point surface singularity (known also as ''simple singularities" in the literature) give a minimal embedded resolution by a birational toric modification of the ambient space. Equivalently, a certain natural family of the irreducible components of  the jet schemes of $X$ centered at the singular point ${0}$ $X_m^0$ is in bijection with the divisorial valuations whose center is a toric divisor on every toric embedded resolution; this bijection is actually a conceptual correspondence since one can associate with any irreducible component of $X^0_m$ a divisorial valuation centered at the origin of $\mathbb A^n$ (see \cite{ELM}). In general, such a statement is hopeless: indeed, even for an irreducible plane curve singularity (say, for the cusp $\{y^2-x^3=0\}\subset \mathbb A^2$), the irreducible components of the jet schemes centered at the origin give divisorial valuations which do not appear in the minimal embedded resolution of the curve singularity (in that case, the minimal embedded resolution makes sense and is unique). \\

\noindent The answer to $(\star)$ is no in general. Indeed, consider the three-dimensional variety defined by
$$X=\{x^2+y^2+z^2+w^5=0\}\subset \mathbb A^4.$$
It has an isolated singularity at the origin $0$. On the one hand, by a direct computation, we see that the jet schemes $X_m^0$ centered at $0$ are irreducible for every $m\geq 1.$  On the other hand, we have two exceptional (irreducible) divisors that appear on every embedded resolution of the singularity (at least those which correspond to the two essential divisors appearing in the abstract resolution of the origin $0$ ) of $X;$ these are the divisors associated with the monomial valuations on $k[x,y,z,w]$ defined by the vectors $(1,1,1,1)$ and $(2,2,2,1).$ The valuation associated
with the vector $(2,2,2,1)$ does not correspond to any of the schemes  $X_m^0$ with $m\geq1.$ Note that this example is one of the counterexamples to the Nash problem given in \cite{JK}; note also that the Nash correspondence is bijective in dimension $2$ \cite{dedo,bope} but there are many counter-examples in higher dimension (\cite{IK,DeF2}). This suggests that a reasonable frame to study the question $(\star)$ is the surface singularities.\\

\noindent In this paper we study the question $(\star)$ for a family of hypersurface singularities whose normalizations are rational triple point singularities (RTP-singularities, for short).  These hypersurfaces are classified in  \cite{mag} and are called the non-isolated forms of RTP-singularities. We prove that, for this class of singularities, the answer to $(\star)$ is {{positive}}. When $X$ is of that type, we determine again a natural family of irreducible components of $X_m^{Sing}$, $m\geq 1$ whose associated divisorial valuations are monomial, hence defined by some vectors in $\mathbb{N}^3$.   For all of the non-isolated forms of RTP-singularities except when $X$ is of type $B_{k-1,2l-1}$, we show that these vectors give a regular subdivision $\Sigma$ of the dual Newton fan of $X$ and hence a nonsingular toric variety $Z_\Sigma$; since our singularities are Newton non-degenerate \cite{Va,AGS,mag}, this gives a birational toric morphism $Z_\Sigma\longrightarrow \mathbb A^3$ which is an embedded resolution of $X\subset \mathbb A^3$; the irreducible components of the exceptional divisor correspond to the natural set of irreducible components of $X_m^{Sing}.$ When $X$ is of type $B_{k-1,2l-1},$ we again build  a toric embedded resolution  from the irreducible components of the jet schemes  which does not factor through the toric map associated with the dual Newton fan (such resolutions of non-degenerate singularities also appear when one considers an embedded resolution in family; work in progress of Leyton-Alvarez, Mourtada and Spivakovsky). This again shows mysteriously that the jet schemes tell us something about the "minimality" of the embedded resolution, as in the case of rational double point singularities.  \\

 \noindent The paper is organized as follows: Section 2 present a reminder  on RTP-singularities. Section 3 is devoted to jet schemes and how one can associate a divisorial valuation with a component of the jet schemes; it also contains a summary of the approach to the embedded resolutions which will be constructed in the sequel. Each of the remaining sections is  devoted to a class of RTP-singularities (given in the table of contents above): we compute each of the jet schemes and present the results in the jet graph (see Section 3). We then give the toric embedded resolution which comes  from the jet schemes.  We give the explicit computations with all details  for the classes $E_{6,0}$ and $A_{k-1,l-1,m-1}.$ For the other classes, except a subclass of the type $B,$ we proceed similarly, so we present here only the results of the computations. The case $B_{k-1, 2l-1}$ with $k\ge l$ is treated in detail as its behavior is completely different from the other cases. This is related to the fact that the abstract toric resolution of  $B_{k-1, 2l-1}$ which is obtained from a subdivision of the two dimensional cones of the dual Newton fan is not minimal \cite{mag}. \\

\subsection*{Acknowledgements} We are grateful to the two referees for their comments and corrections which greatly
improved the article.  The third author would like  also to thank the UMI fibonacci (Pisa) and the Lama (University of Savoie-Montblanc (Chamb\'ery)) for their hospitality during the preparation of this work.

\section{RTP-singularities}

 \noindent  Let $X$ denote a germ of a surface $(X,0)\subset (\mathbb C^N,0)$ having a singularity at $0$. We say that the singularity of $X$ is rational if 
{ $H^1(\tilde X,\mathcal O_{\tilde X})=0$} where  $\pi :\tilde X\longrightarrow X$ is a resolution of $X$. This definition does not depend on the resolution $\pi $. It is well known that the rational singularities of complex surfaces have nice combinatorial properties which can be computed via their resolutions. In  \cite{Ar}, the rational singularities of multiplicity $3$ are classified by their dual graphs associated with the irreducible components of the minimal resolutions. For short, we call  RTP-singularities this class of rational singularities. They are among the surface singularities defined  in $\mathbb C^4$ and, each of which is defined by three equations given in \cite{tj}. The classification problem of rational singularities of multiplicity $m\geq 3$ is well studied in  \cite{le-tosun} and \cite{stevens2}.
 
 \vskip.2cm

\noindent   In \cite{mag}, the authors obtain the equations of a class of hypersurfaces in $\mathbb C^3$ having nonisolated singularities obtained by projecting the equations of RTP-singularities to a generic hyperplane in $\mathbb C^4$ and, they call them the non-isolated forms of RTP-singularities since the normalizations of these hypersurfaces in $\mathbb C^3$ are exactly the RTP-singularities. They also show that:

\begin{thm}
The RTP-singularities  are  non-degenerate with respect to their Newton polyhedron. In particular, they can be resolved by a toric birational map $Z\longrightarrow \mathbb{C}^4.$ \\
\end{thm}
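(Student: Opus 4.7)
The plan is to combine Tjurina's explicit classification \cite{tj} of RTP-singularities with a general Khovanskii--Varchenko type theorem on toric resolutions of Newton non-degenerate singularities (see \cite{AGS} for the formulation appropriate to complete intersections). Each RTP-singularity $(X,0) \subset (\mathbb{C}^4,0)$ is a complete intersection cut out by an explicit triple $f_1, f_2, f_3$, whose shape depends on the discrete type (the families $A$, $B$, $C$, $D$, $E$, $F$, $G$, $H$). The general theorem asserts that once the system $(f_1, f_2, f_3)$ is non-degenerate with respect to its joint Newton polyhedron, any regular simplicial subdivision $\Sigma$ of the dual Newton fan produces a toric birational morphism $Z_\Sigma \to \mathbb{C}^4$ which restricts to a resolution of $X$. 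Thus the proof reduces to verifying the non-degeneracy condition for Tjurina's equations.

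First I would fix the correct definition of Newton non-degeneracy for a complete intersection. Following Khovanskii, as reformulated in \cite{AGS}, for every compact face $\Delta$ of the (joint) Newton polyhedron of the system, one forms the truncated equations $(f_1)_\Delta, (f_2)_\Delta, (f_3)_\Delta$ and requires that they define a smooth subvariety of $(\mathbb{C}^*)^4$ of the expected codimension. Concretely this becomes a rank condition on the $3 \times 4$ truncated Jacobian matrix, evaluated at any point of the torus.

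Next I would verify this condition type by type, using the explicit equations from \cite{tj}. Tjurina's equations are semi-quasi-homogeneous with known weights, so both the Newton polyhedron and its compact faces can be read off directly from the exponents. For each type and each compact face the strategy is the same: compute the truncated system, and check that the relevant $3 \times 3$ minors of the truncated Jacobian are, up to nonzero scalars, monomials in the torus coordinates, hence nowhere vanishing on $(\mathbb{C}^*)^4$. The parametric families $A_{k,l,m}$, $B_{k,l}$, $C_{k,l}$, $D_{k,l}$ must be treated uniformly in their indices, not one value at a time.

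The main obstacle is the combinatorial bookkeeping on the \emph{mixed} compact faces of the Newton polyhedron, i.e.\ those faces on which more than one of the $(f_i)_\Delta$ is non-constant. On such faces the non-degeneracy check is no longer the one-polynomial Varchenko criterion but a genuine rank condition on a matrix of truncations, and the combinatorial type of these faces itself varies with the parameters $k, l, m$. A clean argument requires parametrising the mixed faces by the discrete invariants of each family and checking the Jacobian minors as polynomials in $k, l, m$. Once this is done, non-degeneracy is established in every case and the existence of a toric resolution $Z \to \mathbb{C}^4$ follows at once from the quoted general theorem.
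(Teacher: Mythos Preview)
The paper does not give its own proof of this theorem: it is quoted as a result of \cite{mag} (see the sentence ``They also show that:'' immediately preceding the statement). So there is no in-paper argument to compare against beyond the citation.

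Your proposed strategy---take Tjurina's explicit complete-intersection equations in $\mathbb{C}^4$, verify the Khovanskii/Varchenko non-degeneracy condition face by face and type by type, then invoke the general toric-resolution theorem for non-degenerate complete intersections \cite{AGS,Va,O2}---is exactly the natural route, and it is what \cite{mag} carries out. In that sense your plan matches the cited source. Two small remarks: there is no type $G$ in the RTP list (the families are $A$, $B$, $C$, $D$, the three $E$'s, $F$, $H$); and what you call the ``main obstacle'' is genuinely where all the content lives---your proposal correctly identifies the work but does not do it, so as written it is an outline rather than a proof.
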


 \noindent  { In \cite{mag}, the dual graph of the minimal resolution for all RTP-singularities, except  those of type $B_{k-1,2l-1}$ for $k\geq l$ (see Section 6) are constructed by refining the dual Newton fan of the corresponding non-isolated forms of RTP-singularity (see also \cite{O2,Va}). In the  case of the nonisolated form of a rational singularity of type $B_{k-1,2l-1}$ with $k\geq l,$ the resolution obtained by the subdivision of the corresponding dual Newton fan is not minimal}: { c}onsider the vectors $R:=(2l-2,2,2k+1)$, $Q:=(2k-l+2,1,2k-l+2)$, $P:=(l-1,1,l-1)$, $V:=(2k-l, 1, 2k-l+1)$ and $U:=(l-1,1,l)$ coming out in the subdivision of the dual Newton fan of that singularity: \\

\begin{figure}[H]
	\setlength{\unitlength}{0.30cm}
	
	\includegraphics[scale=0.7]{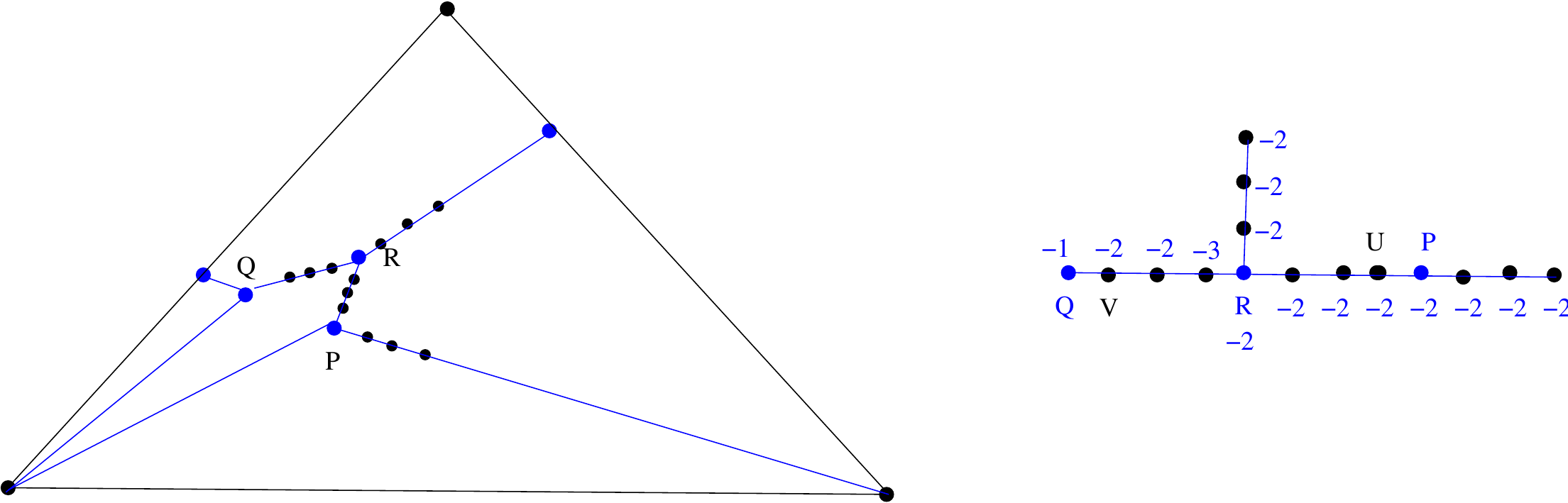}
\caption{Dual Newton fan of a $B_{k-1,2l-1}$ singularity (with $k\geq l$), and its dual (abstract) resolution graph}
\end{figure}

 \noindent Using \cite{O2}, one can compute the self-intersections of the irreducible components of the exceptional divisors corresponding to these vectors; they are given by the number decorating the dual graph given on the right hand side. We omit the genus decorations which are all $0$ in this case.
The exceptional component corresponding to  the vector $Q$ has self-intersection $(-1)$; by Castelnuovo's criterion,  (cf. for example \cite{H}, chapter V), that component can be contracted to a nonsingular point without creating singularities. If we continue to contract each $(-1)$-curve and neighboring components accordingly we obtain a $(-3)$-curve on the segment $[QR]$ and the dual graph of the minimal resolution of the RTP-singularity of type $B_{k-1,2l-1}$, $k\geq l.$

\section{Jet schemes}

\noindent Let $k$ be an algebraically closed field of arbitrary characteristic and $X$ be a $k$-algebraic variety. For $m \in \mathbb{N},$ the jet scheme 
$X_m$ is the scheme representing the functor 
$$F_m \colon
\begin{array}{c}
k\text{-}Schemes \to Sets\\
Spec(A)\mapsto Hom_k\left(Spec \left(A[t]/(t^{m+1})\right),X\right )
\end{array} $$

\noindent where $A$ is a $k$-algebra.  The closed points of $X_m$ are in bijection with the $k[t]/(t^{m+1})$ points of $X$. By definition, we have $X_0=X$. Moreover, for $m,p \in \mathbb{N}$ with $m > p$, we have a  canonical projection $\pi_{m,p}: X_m \longrightarrow X_p$  which is induced by the surjection $A[t]/(t^{m+1}) \longrightarrow A[t]/(t^{p+1}).$ These morphisms are affine and verify $\pi_{m,p}\circ \pi_{q,m}=\pi_{q,p}$
for $p<m<q;$ they define a projective system whose limit is a scheme that we denote $X_{\infty}$ and which is called the arc space of $X$. Let us denote the canonical projection $\pi_{m,0}:X_m\longrightarrow X_0$ by $\pi_{m}$ and, the canonical morphisms $X_{\infty}\longrightarrow X_m$ by $\Psi_m$.   \\

We show here for a surface $X=\{f=0\}\subset k^3$ (since the varieties that we are considering are defined this way) that the functor of the jet schemes is representable; this explains also how one determines jet schemes. We have 
$$X=\mathrm{Spec}\frac{k[x,y,z]}{(f)}.$$
For a $k-$algebra $A,$ an element $\gamma$ in $F_m(\mathrm{Spec}(A))$ corresponds to
a $k-$algebra homomorphism $$\gamma^*: \frac{k[x,y,z]}{(f)}\longrightarrow \frac{A[t]}{(t^{m+1})}.$$
 The data of such a $\gamma$ is equivalent to  the data of 
 $$\gamma^*(x)=x(t)=x_0+x_1t+\cdots+x_mt^m \in A[t]/(t^{m+1}),$$
 $$\gamma^*(y)=y(t)=y_0+y_1t+\cdots+y_mt^m\in A[t]/(t^{m+1}),$$
 $$\gamma^*(z)=z(t)=z_0+z_1t+\cdots+z_mt^m\in A[t]/(t^{m+1});$$
such that $$f(x(t),y(t),z(t))=F_0+F_1t+\cdots+F_mt^m+\cdots=0~~~\mathrm{mod}~~(t^{m+1}).$$
Here, for $i\geq 0, F_i$ is simply the coefficient of $t^i$ in the expanding of $ f(x(t),y(t),z(t)).$\\

Hence, the data of such a $\gamma$ is equivalent to the data of $x_j, y_j, z_j\in A$ with $j=0,\ldots,m$ such that  $F_i(x_0,y_0,z_0,\ldots,x_i,y_i,z_i)=0$ with $i=0,\ldots,m.$ This is equivalent to determining an $A$-point of the scheme 
$$X_m:=\mathrm{Spec}\frac{K[x_i,y_i,z_i;i=0,\ldots,m]}{(F_0,\ldots,F_m)},$$ 
which then represents the functor $F_m$ and, is by definition the $m$-th jet scheme of $X.$ \\

From now on, we assume that $X$ is a surface in $\mathbb{C}^3$ {{ defined by $ \{f(x,y,z)=0\}$ }} and $Y$  is {{a}} subvariety of $X$. {{Let $m\in \mathbb{N}$ }}We denote by $X_m^{Y}:=\pi_{m}^{-1}(Y).$ We consider  a special type of the irreducible components  {{of}} $X_m^{Y}, m\in \mathbb{N}$ where $Y$ is the singular locus of $X$ or $Y\subset X$ is a curve {{contained}} in a coordinate hyperplane of $\mathbb{C}^3$. To such $Y,$ {{we associate}} a divisorial valuation over $\mathbb{C}^3$ with  an irreducible {{component }} $\mathcal{C}_m\subset X_m^{Y}$ {{ in the following way.}} \\

\noindent  Let ${\psi_m^a}: \mathbb{C}^3_\infty \longrightarrow \mathbb{C}^3_m$ be the truncation morphism associated with the ambient space $\mathbb{C}^3$, {{here the exponent $"a"$ stands for \it{ambient map} }}. The morphism ${\psi_m^a}$ is a trivial fibration, hence
 ${\psi_m^a}^{-1}(\mathcal{C}_{m})$ is an irreducible cylinder in $\mathbb{C}^3_\infty.$ Let $\eta$ be the generic point of ${\psi_m^a}^{-1}(\mathcal{C}_{m})$. { By Corollary 2.6 in} \cite{ELM},  the map  $\nu_{\mathcal{C}_m}:\mathbb C[x,y,z]\longrightarrow \mathbb{N}$ defined by  
$$ \nu_{\mathcal{C}_m}(h)=\mbox{ord}_t h\circ \eta $$
{{ is}} a divisorial valuation on $\mathbb C^3.$ To each irreducible component $\mathcal{C}_m$ of $X_m^{Y}$, let us  associate a vector, called the weight vector, in the following way:
$$v(\mathcal{C}_m):=(\nu_{\mathcal{C}_m}(x),\nu_{\mathcal{C}_m}(y),\nu_{\mathcal{C}_m}(z))   \in \mathbb{N}^{3}.$$ 
\noindent Now, we want to characterize {{ the}}  irreducible components of $X_m^{Y}$ that will allow us to construct an embedded resolution of $X$:
For $p\in \mathbb{N},$ we consider the following cylinder in the arc space:
$$Cont^p(f)=\{\gamma \in \mathbb{C}^3_\infty ; \mbox{ord}_tf\circ \gamma=p\}.$$

\begin{defn} {{Let $X:\{f=0\}$ be a surface in $ \mathbb{C}^3$ and let $Y$ be a subvariety of $X$}}.\\
 $(i)$ The elements of the set:
$$EC(X):=\{\text{Irreducible components }\mathcal{C}_m \ \text{of} \ X_m^{Y} \ \text{such that }  {\psi_m^a}^{-1}(\mathcal{C}_{m})\cap Cont^{m+1}{f} \not = \emptyset$$
$$\ \ \ \ \ \ \ \ \ \text{and } v(\mathcal{C}_m)\not=v(\mathcal{C}_{m-1})~~\text{for {any component}} ~~\mathcal{C}_{m-1} ~~\text{verifying}~~ 
\pi_{m,m-1}(\mathcal{C}_m)\subset \mathcal{C}_{m-1}, m\geqslant 1 \}$$
are called the \emph{essential components} for $X$. 

\noindent $(ii)$ the elements of the set of associated valuations: 
$$EV(X):=\{\nu_{\mathcal{C}_m},~~\mathcal{C}_m\in EC(X) \},$$
are called \emph{embedded-valuations} for $X$. 
\end{defn}

\noindent This means that the elements of $EV(X)$  appear in the embedded toric resolution of $X$.  We will be interested in a subset of $EV(X),$ which {{gives}} us an embedded resolution.  In the following sections, in order to determine such a subset when $X$ is a non-isolated form of an RTP-singularity, we will study the $m$-th jet schemes of $X,$ for $m\leq l$ with $l$ large enough. We will encode the structure of these jet schemes by a levelled  graph whose vertices correspond to the irreducible components of $X^Y_m$ for an integer $m$; two vertices at the level $m$ and $m-1$ are joined by an edge if the transition morphism $\pi_{m,m-1}$ {{sends}} the corresponding components one into the other \cite{Mo3}. An element of $EV(X)$ corresponding to a component $\mathcal{C}_m \in EC(X)$ is actually a monomial (or toric) valuation (see proposition 2.3 in \cite{MP}) and is 
defined by the vector $v(\mathcal{C}_m)=(a,b,c)$: this means that, for $h=  \sum_{\{(i,j,k)\}}a_{(i,j,k)}x^iy^jz^k\in \mathbb{C}[x,y,z]$ we have:
$$\nu_{\mathcal{C}_m}(h)=\mathrm{min}_{\{(i,j,k)\mid a_{(i,j,k)}\not=0\}}\{ai+bj+ck\}.$$
By subdividing the first quadrant of $\mathbb{R}^3$ using the vectors $v(\mathcal{C}_m)$ for some $\mathcal{C}_m \in EC(X),$ we obtain a fan $\Sigma$ whose support is the first quadrant of $\mathbb{R}^3$ and whose one dimensional cones are generated by these $v(\mathcal{C}_m)$'s. Note that one can obtain different fans from a set of vectors in $\mathbb{R}^3$, depending on the way one relies the vertices and, some of them may not be regular, but here we are interested in finding a regular fan. Hence we have a proper birational map $\mu_\Sigma:Z_\Sigma\longrightarrow \mathbb{C}^3$ where $Z_\Sigma$ is smooth and the irreducible components of the exceptional divisor of $\mu_{\Sigma}$ correspond to the vectors $v(\mathcal{C}_m)$ that we consider. More precisely, the divisorial valuations corresponding to the irreducible components of the exceptional divisor of $\mu_\Sigma$ are exactly the $\nu_{\mathcal{C}_m}$ associated with the components   $\mathcal{C}_m$ that we consider. We will find such a regular fan $\Sigma$ for a non-isolated {{form  $X$ of}} an RTP-singularity which is not of the type $B_{k-1,2l-1}$ ($i.e$ we will construct  $\Sigma$ using the vectors of type  $v(\mathcal{C}_m))$ that refines the dual Newton fan of $X\subset \mathbb{C}^3$. Thanks to Varchenko's theorem \cite{Va}, this gives that $\mu_\Sigma$ is an embedded resolution of $X\subset \mathbb{C}^3.$ On the other hand, for a $B_{k-1,2l-1}$-singularity, we cannot apply Varchenko's theorem because there is no $\Sigma $ refining the dual Newton fan as described above; nevertheless we build a regular fan $\Sigma$ satisfying the properties above and, we prove by studying the total transform of our singularity by $\mu_\Sigma$ that $\mu_\Sigma:Z_\Sigma\longrightarrow \mathbb{C}^3$ is an embedded resolution of the $B_{k-1,2l-1}$ singularity.
  
\section{RTP-singularities of type $E_{6,0}$}

\noindent  The singularity of $X\subset \mathbb{C}^3$ defined by  the equation:
$$f(x,y,z)=z^{3}+y^{3}z+x^{2}y^{2}=0$$ is called $E_{6,0}$-type singularity. Its dual Newton fan is given in Figure \ref{dualE60}. \\

 In this section, we compute explicitly the $m$-th jet schemes (for $m\leq 18$) and we determine a subset of $EV(X)$ which gives a regular subdivision of the dual Newton fan as explained in the previous section. We represent the irreducible components as a graph in Figure \ref{E60}, where we also weight the vertex associated with a component $\mathcal{C}_m$ by the vector $v(\mathcal{C}_m)$ also defined in the previous section. For a component $\mathcal{C}_m$ which projects by the maps $\pi_{m,m-1}$ given in Section $3$ on a monomial component ($i.e.$ a component whose associated valuation is monomial)  $\mathcal{C}_{m-1},$ which is not itself monomial; we also weight the associated vertex by the unique non-monomial equation which, together with the hyperplane coordinates $\mathcal{C}_{m-1},$ defines $\mathcal{C}_{m}$. That helps for the computations of the irreducible components in the process. Here we do not pay much attention to the edges since they are not relevant for the problem at hand. The arrows in Figure $\ref{E60}$ correspond to a component $\mathcal{C}_m$ such that the inverse image of a dense open set in it gives an irreducible component for every $n\geq m.$ 
First let us fix some notations:

\begin{figure}[!ht]
\includegraphics [scale=0.3,height=6cm,width=12cm]{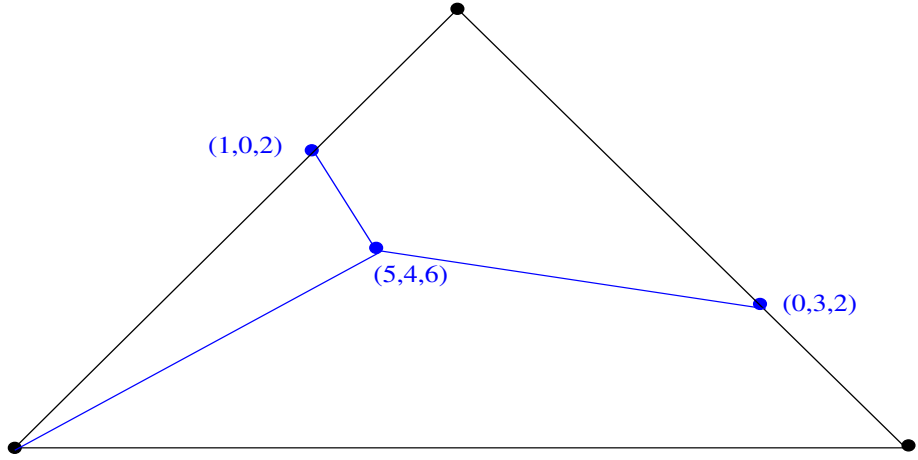}
\caption{Dual Newton fan of $E_{6,0}$ singularity}
\label{dualE60}
\end{figure} 
\vskip.2cm

\noindent  {\bf Notation:} Let
$$(\ast )\  \  f(\sum\limits_{i=0}^{m}x_{i}t^{i}, \sum\limits_{i=0}^{m}y_{i}t^{i}, \sum\limits_{i=0}^{m}z_{i}t^{i} )= \sum\limits_{i=0}^{i=m}F_{i}t^{i} \ \ \ \ mod (t^{m+1})$$
\noindent We know that (e.g. \cite{MP}) the $m$-th jet scheme $X_{m}$ is defined by the ideal 
$$I_{m}=(F_{0}, F_{1},\dots ,F_{m}) \subset \mathbb{C}[x_i,y_i,z_i;i=0,\ldots,m].$$
\subsection{Jet Schemes of $E_{6,0}$}

\noindent For $m\geq 1,$ we will determine the irreducible components of the space of $m-$jets that projects on the singular locus of $X,$ i.e. the irreducible components of $X_m^{Sing}:=\pi_{m,0}^{-1}(V(y_0,z_0))\subset X_m\subset \mathrm{Spec}(\mathbb{C}[x_i, y_i, z_i;i=0,\ldots,m])=\mathbb{C}_m^3;$ here $V(I)$ denotes the variety defined by an ideal $I$ and $\mathbb{C}_m^3$ is the $m$-th jet scheme of the affine three dimensional space $\mathbb{C}^3;$ we insist here that when considering $X_m^{Sing}$ for a given $m,$ the symbol $V(I)$ designates the variety defined by an ideal $I$ in $\mathbb{C}_m^3.$ Recall that $\pi_{m_0}:X_m\longrightarrow X_0=X.$ We also insist on the fact that we consider only the reduced structure of these schemes.\\

\noindent {\bf For $m=1$}, we have $X_1^{Sing}=V(y_0,z_0)\subset\mathrm{Spec}(\mathbb{C}[x_i,y_i,z_i;i=0,1])$ because, if we put $y_0=z_0=0$ in the equation $(\ast)$  we get $F_0=F_1=0$ modulo the ideal $(y_0,z_0).$ Hence, $X_1^{Sing}$ consists of a unique irreducible component, denoted by $\mathcal{C}_{1,1}.$ The weight vector of $\mathcal{C}_{1,1}$ is $(0,1,1).$\\
 
\noindent {\bf For $m=2$}, we have $X_2^{Sing}=\pi_{2,1}^{-1}(\mathcal{C}_{1,1});$ this uses the fact $\pi_{2,0}= \pi_{1,0}\circ \pi_{2,1}.$ A direct computation using the equation $(\ast)$ gives:
$$F_2=x_0^2y_1^2~~~\mathrm{mod}~~ (y_0,z_0).$$ 
Hence $X_2^{Sing}=V(y_0,z_0,x_0^2y_1^2)\subset \mathrm{Spec}(\mathbb{C}[x_i, y_i, z_i;i=0,1,2])=\mathbb{C}_2^3$. We deduce that  $X_2^{Sing}$ has two irreducible components 
$\mathcal{C}_{2,1}:=V(y_0, z_0, x_0)$ and $\mathcal{C}_{2,2}:=V(y_0, z_0, y_1)$ both are sent via $\pi_{2,1}$ into $\mathcal{C}_{1,1};$ there weight vectors are respectively $(1,1,1)$ and $(0,2,1).$ These vectors are represented in Figure \ref{E60} at the levels $m=1$ and $m=2.$\\

\noindent {\bf For $m=3$}, using  the fact $\pi_{3,0}=\pi_{2,0}\circ \pi_{3,2},$ it is sufficient to study $\pi_{3,2}^{-1}(\mathcal{C}_{2,j})$ with $j=1,2$ to understand $X_3^{Sing}.$  
\begin{itemize}
\item To find $\pi_{3,2}^{-1}(\mathcal{C}_{2,1}),$ we compute $F_3$ modulo the ideal $(x_0,y_0,z_0)$ and we obtain: 
$$F_3= z_1^3~~~\mathrm{mod} (x_0,y_0,z_0);$$  
Hence we obtain that $\mathcal{C}_{3,1}:=\pi_{3,2}^{-1}(\mathcal{C}_{2,1})=V(x_0,y_0,z_0,z_1)$ is irreducible. 
\item Similarly, we obtain that   $\mathcal{C}_{3,2}:=\pi_{3,2}^{-1}(\mathcal{C}_{2,2})=V(y_0,y_1,z_0,z_1)$ is irreducible.
\end{itemize}
\noindent So we have $X_3^{Sing}=\mathcal{C}_{3,1}\cup \mathcal{C}_{3,2}$ where $\mathcal{C}_{3,1}$ and  $\mathcal{C}_{3,2}$ are both irreducible and clearly there is no inclusions between them: indeed, $\mathcal{C}_{3,1}$ is included in $V(x_0)$ but  $\mathcal{C}_{3,2}$  is not and,  $\mathcal{C}_{3,2}$ is included in $V(y_1)$ but  $\mathcal{C}_{3,1}$  is not. We conclude that $\mathcal{C}_{3,1}$ and  $\mathcal{C}_{3,2}$ are the irreducible components of $X_3^{Sing}$. Their associated weight vectors are respectively $(1,1,2)$ and $(0,2,2).$\\

\noindent {\bf For $m=4$}, as in the previous case, it is sufficient to consider $\pi_{4,3}^{-1}(\mathcal{C}_{3,j}),$ with $j=1,2.$ As the computations go almost in the same way, we just announce what we obtain:
\begin{itemize}
	\item  To determine $\pi_{4,3}^{-1}(\mathcal{C}_{3,1})$ we compute $F_4$ modulo the ideal $(x_0,y_0,z_0,z_1).$ We have 
	$$F_4=x_1^2y_1^2~~~\mathrm{mod}~~ (x_0,y_0,z_0,z_1).$$ Hence, $\pi_{4,3}^{-1}(\mathcal{C}_{3,1})$ has $2$ irreducible components  $\mathcal{C}_{4,1}=V(x_0,y_0,y_1,z_0,z_1)$  and $\mathcal{C}_{4,2}=V(x_0,y_0,x_1,z_0,z_1).$ 
	\item Similarly we have  $\pi_{4,3}^{-1}(\mathcal{C}_{3,2})=\mathcal{C}_{4,1}\cup \mathcal{C}_{4,3}$ where $\mathcal{C}_{4,3}= V(y_0,y_1,y_2,z_1,z_0).$ 
\end{itemize}
\noindent Then we get $$X_4^{Sing}=\mathcal{C}_{4,1}\cup \mathcal{C}_{4,2}\cup \mathcal{C}_{4,3}$$ which is a decomposition into irreducible varieties. Using a similar argument as in the case of $m=3$, we conclude that there are no mutual inclusions between these components; hence this is the decomposition into irreducible components. The corresponding weight vectors of $\mathcal{C}_{4,1}, \mathcal{C}_{4,2}$ and  $\mathcal{C}_{4,3}$ are 
respectively $(1,2,2),(2,1,2)$ and $(0,3,2).$ Figure \ref{E60} encodes also this information.\\

\noindent {\bf For $m=5$}, we have 
$$X_5^{Sing}=\pi_{5,4}^{-1}(\mathcal{C}_{4,1})\cup \pi_{5,4}^{-1}(\mathcal{C}_{4,2})\cup\pi_{5,4}^{-1}(\mathcal{C}_{4,3})\subset \mathbb{C}_5^3. $$ 
\begin{itemize}
	\item To determine $\pi_{5,4}^{-1}(\mathcal{C}_{4,1}),$ we compute $F_5$ modulo  $(x_0,y_0,y_1,z_0,z_1)$ that we find to be $0.$ We deduce, that ${C}_{5,1}:= \pi_{5,4}^{-1}(\mathcal{C}_{4,1})= V(x_0,y_0,y_1,z_0,z_1)$ is irreducible. A small attention here is needed: The varieties $\mathcal{C}_{4,1}$ and  $\mathcal{C}_{5,1}$ are not the same; they are defined by the same equations but in different rings; they actually define the same valuation on $\mathbb{C}^3$ (see proposition 2.3 in \cite{MP}).
	\item Computing $F_5$ modulo the ideal  $(x_0,x_1,y_0,z_0,z_1)$, we find $F_5=y_1^3z_2=0$. So $\pi_{5,4}^{-1}(\mathcal{C}_{4,2})$ is the union of  $V(x_0,x_1,y_0,y_1,z_0,z_1)$  and $\mathcal{C}_{5,2}:=V(x_0,x_1,y_0,z_1,z_0,z_2).$ 
	\item As for $\pi_{5,4}^{-1}(\mathcal{C}_{4,1}),$ computing $F_5 $ modulo $(y_0,y_1,y_2,z_0,z_1)$ we find zero. This gives that $\mathcal{C}_{5,3}:=\pi_{5,4}^{-1}(\mathcal{C}_{4,3})=V(y_0,y_1,z_0,z_1,z_2)$is irreducible.
\end{itemize}
Hence we obtain $$X_5^{Sing}=\mathcal{C}_{5,1}\cup \mathcal{C}_{5,2}\cup V(x_0,x_1,y_0,y_1,z_0,z_1)\cup \mathcal{C}_{5,3}.$$
Since $V(x_0,x_1,y_0,y_1,z_0,z_1)$ is included in $\mathcal{C}_{5,1}$, the decomposition $$X_5^{Sing}=\mathcal{C}_{5,1}\cup \mathcal{C}_{5,2}\cup  \mathcal{C}_{5,3}$$  
 is the decomposition into the irreducible components. Moreover, the weight vectors of
$\mathcal{C}_{5,j}$ for $j=1,2,3$ are $(1,2,2), (2,1,3)$ and $(0,3,2)$ respectively.\\

\noindent {\bf For $m=6$}, we have $$X_6^{Sing}=\pi_{6,5}^{-1}(\mathcal{C}_{5,1})\cup \pi_{6,5}^{-1}(\mathcal{C}_{5,2})\cup\pi_{6,5}^{-1}(\mathcal{C}_{5,3})\subset \mathbb{C}_6^3. $$ 
\begin{itemize}
	\item To determine $\pi_{6,5}^{-1}(\mathcal{C}_{5,1}),$ we compute $F_6$ modulo the ideal $(x_0,y_0,y_1,z_0,z_1)$ and we find 
	$$\mathcal{C}_{6,1}:=\pi_{6,5}^{-1}(\mathcal{C}_{5,1})=V(x_0,y_0,y_1,z_0,z_1,z_2^3+x_1^2y_2^2)\subset  \mathbb{C}_6^3.$$ Notice that
	$\mathcal{C}_{6,1}$ is isomorphic to the product of an affine space and the hypersurface defined by $\{z_2^3+x_1^2y_2^2=0\};$ this hypersurface is a Hirzebruch-Jung singularity which is well known to be an irreducible quasi-ordinary singularity \cite{CM}; in particular $\mathcal{C}_{6,1}$ is irreducible. Actually, we will see that $\mathcal{C}_{6,1}$ will give rise to an irreducible component of $X_6^{Sing}$ whose weight vector is same as the weight vector associated with $\mathcal{C}_{5,1},$ so it is not an essential component (see definition above): the divisorial valuation associated with it is not monomial while a divisorial valuation associated with an essential component is monomial. Before we continue to study on $X_6^{Sing},$ let us consider $\pi_{m,6}^{-1}(\mathcal{C}_{6,1})$ for $m\geq 7$: For this, we will stratify $\mathcal{C}_{6,1}$ into its regular locus and its singular locus which are defined respectively by 
$x_1=z_2=0$ and $y_2=z_2=0.$ The inverse images $\pi_{7,6}^{-1}(\mathcal{C}_{6,1}\cap \{x_1=z_2=0\})$
and $\pi_{7,6}^{-1}(\mathcal{C}_{6,1}\cap \{y_2=z_2=0\})$ will give the irreducible components of $X_7^{Sing}$ looking like the irreducible components that we have studied before which are the essential components, so give the new weight vectors. The inverse image of the regular part of $\mathcal{C}_{6,1}$ with respect to $\pi_{m,6},$ with $m\geq 7$  is equal to $\pi_{m,6}^{-1}(\mathcal{C}_{6,1}\cap \{z_2\not=0\});$ this latter is defined in $\mathbb{C}_m^3 \cap \{z_2\not=0\}$ by the ideal generated by $x_0,y_0,y_1,z_0,z_1,z_2^3+x_1^2y_2^2$ and
  $$F_j=c_jz_3z_{j-3}+H_j(x_1,\ldots,x_{j-5},y_2,\ldots,y_{j-4},z_3,\ldots,z_{j-3}), c_j\in \mathbb{C}^*$$
  for $7\leq j\leq m $. The functions $F_j$ are linear as we can invert $c_jz_3\not=0.$ Then       
the Zariski closure $\overline{\pi_{m,6}^{-1}(\mathcal{C}_{6,1}\cap \{z_2\not=0\})}$ is irreducible and, is actually an irreducible component of $X_m^{Sing}$ for every $m\leq 7$. Note that the weight vector of $\overline{\pi_{m,6}^{-1}(\mathcal{C}_{6,1}\cap \{z_2\not=0\})}$ is $(1,2,2)$ which is same as the one for $\mathcal{C}_{6,1}$ and for $\mathcal{C}_{5,1};$ hence they don't give an essential component. They are encoded in Figure \ref{E60} by the dashed arrow which starts at the vertex weighted by the 
vector $(1,2,2)$ and the equation $z_2^3+x_1^2y_2^2=0.$ \\
	 
	\item To determine $\pi_{6,5}^{-1}(\mathcal{C}_{5,2}),$ we compute $F_6$ modulo the ideal $(x_0,x_1,y_0,z_0,z_1,z_2)$ and we find that $F_6=y_1^2(z_2y_1+x_2^2).$ So $\pi_{6,5}^{-1}(\mathcal{C}_{5,2}) $ is the union of $\mathcal{C}_{6,2}:=V(x_0,x_1,y_0,y_1,z_0,z_1,z_2)$  and $\mathcal{C}_{6,3}:=V(x_0,x_1,y_0,z_1,z_0,z_2,z_3y_1+x_2^2)$ which are both irreducible. We note that $\pi_{m,6}^{-1}(\mathcal{C}_{6,3})$ is irreducible for every $m\ge 7$ and gives rise to an irreducible component of $X_m^{Sing}$ for every $m\geq 7.$ The irreducibility of the inverse image results from the 
	fact that $\mathcal{C}_{6,3}$ is the product of an affine space and an $A_1$-singularity and the jet schemes of such singularity are irreducible 
	\cite{Mu,Tor} (what applies here for $A_1$ is also true for any rational singularity). The components of $\pi_{m,6}^{-1}(\mathcal{C}_{6,3})$ are not the essential components, they are associated with non-monomial valuations and they have the same weight vector, namely $(2,1,3).$ They are encoded in  Figure \ref{E60} (to the most right of the graph) by the dashed arrow which starts at the vertex weighted by the 
vector $(2,1,3)$ and the equation $x_2^2+z_3y_1=0.$ \\
	
	\item To determine $\pi_{6,5}^{-1}(\mathcal{C}_{5,3}),$ we compute $F_6$ modulo the ideal  $(y_0,y_1,y_2,z_0,z_1)$ and we find that $F_6=z_2^3+x_0^2y_3^3.$  Hence
	$\mathcal{C}_{6,4}:=\pi_{6,5}^{-1}(\mathcal{C}_{5,3})=V(y_0,y_1,y_2,z_0,z_1,z_2^3+x_0^2y_3^2)$ is irreducible. 
By the same argument as in the case of $\pi_{m,6}^{-1}(\mathcal{C}_{6,1}),$ the inverse images $\pi_{7,6}^{-1}(\mathcal{C}_{6,4}\cap \{x_0=z_2=0\})$
and $\pi_{7,6}^{-1}(\mathcal{C}_{6,4}\cap \{y_3=z_2=0\})$ will give rise to the irreducible components of $X_7^{Sing};$ the Zariski closure $\overline{\pi_{m,6}^{-1}(\mathcal{C}_{6,4}\cap \{z_2\not=0\})}$ is irreducible and is actually an irreducible component of $X_m^{Sing}$ for every $m\geq 7.$ This is encoded in Figure \ref{E60} by the dashed arrow starting at the vertex weighted by the 
vector $(0,3,2)$ and the equation $z_2^3+x_0^2y_3^2.$
	\end{itemize}

\noindent To summarize, we obtain $X_6^{Sing}=\mathcal{C}_{6,1}\cup \mathcal{C}_{6,2}\cup  \mathcal{C}_{6,3}\cup \mathcal{C}_{6,4}$ where each $\mathcal{C}_{6,j}$ for $j=1\ldots,4$ is irreducible. Obviously, $\mathcal{C}_{6,2}\subset \mathcal{C}_{6,1} $ and, using the same argument as in the case of $m=3$,  we verify that there is no inclusion among the remaining $ \mathcal{C}_{6,j}$'s. Hence we get the irreducible decomposition
$$X_6^{Sing}=\mathcal{C}_{6,1}\cup  \mathcal{C}_{6,3}\cup \mathcal{C}_{6,4}$$ with the respective weight vectors $(1,2,2),(2,1,3)$ and $(0,3,2)$.\\

\noindent {\bf For $m=7$}, by the above discussions, we have a stratification 
$$X_7^{Sing}= \pi_{7,6}^{-1}(\mathcal{C}_{6,1}\cap \{x_1=z_2=0\})
\cup\pi_{7,6}^{-1}(\mathcal{C}_{6,1}\cap \{y_2=z_2=0\}) \cup \overline{\pi_{7,6}^{-1}(\mathcal{C}_{6,1}\cap \{z_2\not=0\})}\cup $$ 
$$\pi_{7,6}^{-1}(\mathcal{C}_{6,3})\cup \pi_{7,6}^{-1}(\mathcal{C}_{6,4}\cap \{y_3=z_2=0\}) \cup \overline{\pi_{m,6}^{-1}(\mathcal{C}_{6,4}\cap \{z_2\not=0\})}$$ which is the decomposition into irreducible components; indeed, on the one hand using the same argument
as for $m=3$,  there is no inclusions between $\pi_{7,6}^{-1}(\mathcal{C}_{6,3})$ and the other components; on the other hand, the other components are clearly not equal, this means that there are no inclusions between them because they are irreducible and they have the same dimension (actually codimension $7$ in $\mathbb{C}^3_7$). Note that the codimension is easy to compute since the equations are either hyperplane coordinates in $\mathbb{C}^3_7$ or we consider the closure of a constructible set which is defined by hyperplane coordinates and by linear equations. The weight vectors are respectively $(2,2,3),(1,3,3),(1,2,2),(2,1,3),(0,4,3)$ and $(0,3,3).$ Moreover we have 
$\pi_{7,6}^{-1}(\mathcal{C}_{6,4}\cap \{x_0=z_2=0\})=\pi_{7,6}^{-1}(\mathcal{C}_{6,1}\cap \{y_2=z_2=0\}).$
We should also note that although $\mathcal{C}_{6,2}$ is not an irreducible component, its inverse image $\pi_{7,6}^{-1}(\mathcal{C}_{6,2})$ which is equal to $\pi_{7,6}^{-1}(\mathcal{C}_{6,1}\cap \{y_2=z_2=0\})$ gives an irreducible component.\\

We have gone through the arguments which allow to determine all the irreducible components of $X_m^{Sing}$ for $m\leq 18.$ This is encoded in Figure \ref{E60}. Note that $18$ is the quasi-degree of the weighted homogeneous polynomial defining our singularity.\\
 One last important thing is that the axis $Y=\{x=z=0\}$ is  drawn on our singularity. We determine the essential components of $X_m^Y,m\geq 0,$ we find $V(x_0,z_0)\subset X_0\subset \mathbb{C}^3_0$ and $V(x_0,z_0,z_1)\subset X_1\subset \mathbb{C}^3_1$ whose weight vectors are respectively $(1,0,1), (1,0,2).$\\
 
To conclude, the essential components are the irreducible components of $X_m^Z$ (where $Z$ is the singular locus of $X$ or $Z=Y$ is the $y$-axis)  whose defining equations are hyperplane coordinates and, their associated valuations are monomial and determined with their weight vectors.  Hence we get the graph in figure \ref{E60} for the jet schemes.\\

\begin{prop}\label{vE60} 
For an $E_{6,0}$-singularity, the monomial valuations associated with the vectors $(0,1,1),(0,2,1),(1,1,1),(0,3,2),(1,1,2),(1,2,2), (2,1,2),(2,1,3),(2,2,3),(3,2,3),(3,2,4),\break (3,3,4),(4,3,5),(5,4,6)$ belong to $EV(X)$.
\end{prop}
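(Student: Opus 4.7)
The plan is to continue the level-by-level analysis of $X_m^{Sing}$ started in Section~4.1 all the way up to $m=18$ (the quasi-degree of $f=z^3+y^3z+x^2y^2$), and to exhibit, for each of the fourteen vectors in the statement, an essential component $\mathcal{C}_m\in EC(X)$ realising it as its weight vector. Note that the proposition asserts inclusion, not equality: other essential vectors (e.g.\ $(1,3,3)$ and $(0,4,3)$, already visible at level $m=7$) also arise, but the fourteen listed ones are precisely those used in the next subsection to build the regular subdivision of the dual Newton fan.

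The inductive step is mechanical: for every irreducible component $\mathcal{C}_{m-1,j}\subset X^{Sing}_{m-1}$, one computes $F_m$ modulo the defining ideal of $\mathcal{C}_{m-1,j}$ using the expansion $(\ast)$, factors the result, and reads off the irreducible components of $\pi_{m,m-1}^{-1}(\mathcal{C}_{m-1,j})$ together with their weight vectors $v(\mathcal{C}_m)$. A component enters $EV(X)$ when its defining ideal is generated by coordinate hyperplanes (so the associated valuation is monomial, by Proposition~2.3 of \cite{MP}) and when $v(\mathcal{C}_m)\neq v(\mathcal{C}_{m-1})$ for the parent component.

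Two non-monomial local patterns dominate the recursion: a Hirzebruch--Jung fibre of type $\{z_j^3+x_i^2y_k^2=0\}$ (first appearing at $m=6$ inside $\mathcal{C}_{6,1}$ and $\mathcal{C}_{6,4}$) and an $A_1$-fibre of type $\{x_i^2+z_jy_k=0\}$ (first appearing at $m=6$ inside $\mathcal{C}_{6,3}$). Each is irreducible and carries the weight vector of its parent, so it is \emph{not} itself essential. The pullback of its regular locus remains irreducible at every higher level --- for the $A_1$-fibre this uses the irreducibility of jet schemes of rational singularities \cite{Mu,Tor}; for the Hirzebruch--Jung fibre a direct linear inversion as exhibited for $\mathcal{C}_{6,1}$ --- while the stratification of its singular locus produces essential monomial components at the next level, matching the dashed arrows in Figure~\ref{E60}.

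Running the induction from $m=8$ up to $m=18$ and collecting the weight vectors of the essential components then yields a finite list containing each of the fourteen vectors of the statement. The first nine, $(0,1,1),(0,2,1),(1,1,1),(0,3,2),(1,1,2),(1,2,2),(2,1,2),(2,1,3),(2,2,3)$, have already been displayed in the $m\leq 7$ analysis above; the remaining five, $(3,2,3),(3,2,4),(3,3,4),(4,3,5),(5,4,6)$, emerge successively at higher $m$ as the non-monomial fibres born at level $6$ are repeatedly stratified along their singular loci. The main obstacle is not any single calculation but the bookkeeping of the whole jet tree through eighteen levels, together with the systematic verification that every new non-monomial fibre falls into one of the two recurring patterns above; once this is completed, the fourteen weight vectors of the proposition can be read directly off Figure~\ref{E60}.
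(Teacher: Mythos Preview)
Your proposal is correct and follows essentially the same approach as the paper: the proposition is a summary of the level-by-level jet computations of Section~4.1, carried out explicitly for $m\leq 7$ and then continued (with the same recurring Hirzebruch--Jung and $A_1$ patterns you identify) up to $m=18$, the results being recorded in Figure~\ref{E60}. The paper gives no separate proof beyond this computation and the jet graph, so your description matches it.
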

\begin{figure}[H]
\includegraphics[height=13cm,width=14cm]{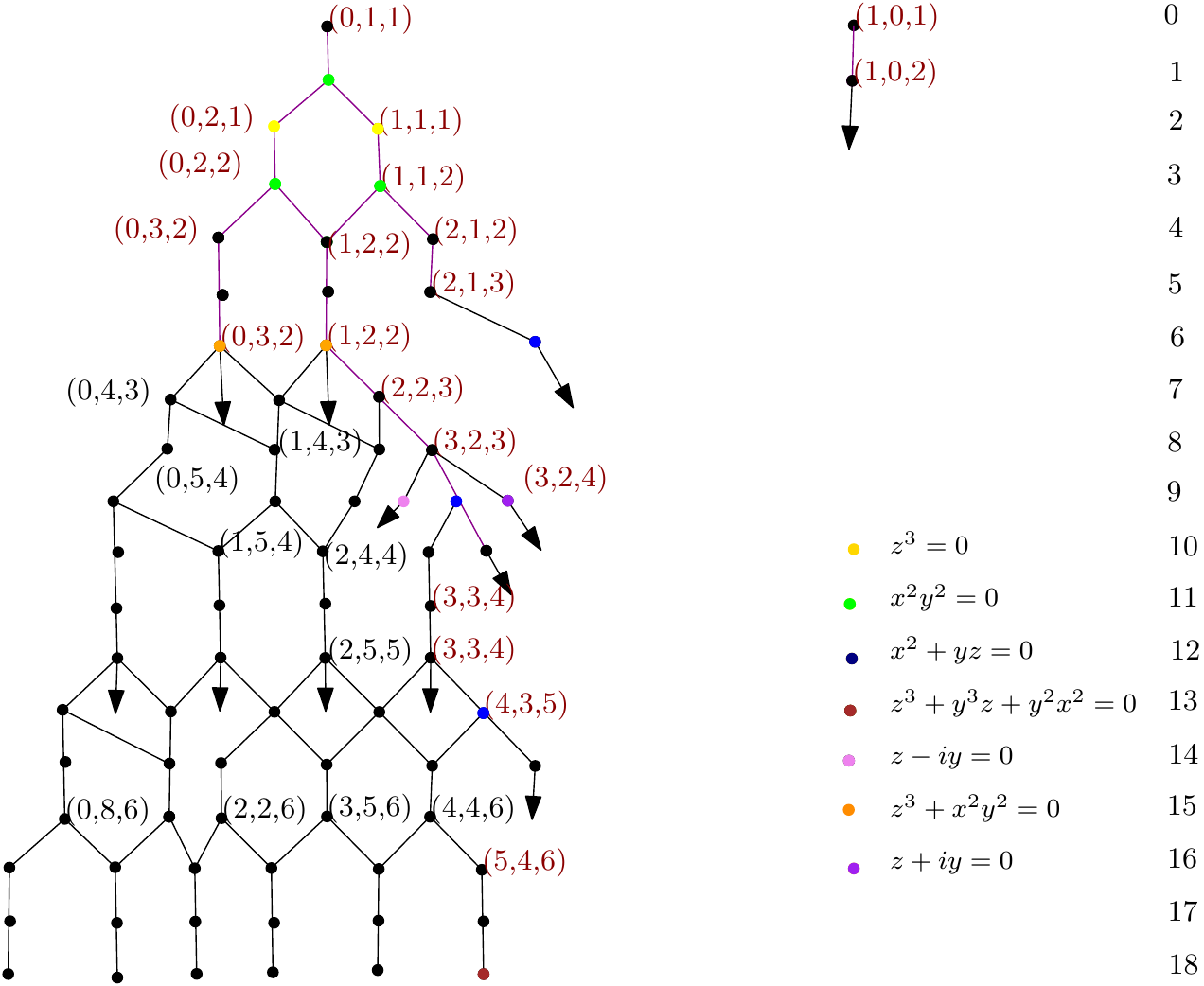}
\caption{Jets schemes of $E_{6,0}$}
\label{E60}
\end{figure}

\subsection{Toric Embedded Resolution of $E_{6,0}$}

\noindent Now we are ready to announce the main result for the surface $X$ of type $E_{6,0}$-singularity.
\begin{thm} There exists a toric birational map $\mu_\Sigma:Z_\Sigma\longrightarrow \mathbb{C}^3$ which is an embedded resolution of $X\subset \mathbb{C}^3$ such that the components of the exceptional divisor of $\mu_\Sigma$ correspond to the irreducible components of the $m$-th jet schemes of $X$ (centered at the singular locus and the intersection of $X$ with the coordinate hyperplane). Moreover this yields a construction (not canonical) of $\mu_\Sigma.$    
\end{thm}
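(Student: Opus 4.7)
The plan is to use the monomial valuations produced by Proposition \ref{vE60} together with the two additional vectors $(1,0,1)$ and $(1,0,2)$ coming from the essential components over the $y$-axis $Y=\{x=z=0\}\subset X$, and take them (along with the three coordinate rays $e_1,e_2,e_3$) as the one-dimensional cones of a fan $\Sigma$ in $\mathbb{R}_{\geq 0}^3$. Reading off the jet graph in Figure \ref{E60}, the natural ordering of these vectors by increasing $m$ suggests an iterated stellar subdivision of the dual Newton fan (Figure \ref{dualE60}); I would write down the resulting triangulation of the first octant by listing its maximal cones explicitly.

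Next I would carry out the two necessary verifications. First, that the prescribed maximal cones glue into a genuine fan: any two of them intersect along a common face, they cover the first octant, and each of them lies in a single maximal cone of the dual Newton fan of $X$, so that $\Sigma$ is a refinement of it. Second, regularity: for every three-dimensional cone $\sigma=\langle v_i,v_j,v_k\rangle$ of $\Sigma$ the primitive generators satisfy $|\det(v_i,v_j,v_k)|=1$. Both verifications reduce to a finite list of $3\times 3$ determinant computations and inclusion checks among the explicit vectors.

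Once $\Sigma$ is shown to be a regular refinement of the dual Newton fan, Theorem 2.1 together with Varchenko's theorem \cite{Va} implies that the induced toric morphism $\mu_\Sigma:Z_\Sigma\longrightarrow \mathbb{C}^3$ is an embedded resolution of $X\subset \mathbb{C}^3$. The irreducible components of its exceptional divisor are in bijection with the rays of $\Sigma$ other than the coordinate axes, which by construction are exactly the weight vectors attached to the essential components in $EC(X)$ identified in Subsection 4.1, establishing the correspondence claimed. The main obstacle will be the combinatorial bookkeeping: among the many triangulations one could assemble from the prescribed rays, one must exhibit at least one that is simultaneously regular and refines the dual Newton fan. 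The non-canonicity stated in the theorem reflects precisely this freedom of choice.
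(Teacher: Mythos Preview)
Your proposal is correct and follows essentially the same approach as the paper: both construct a regular fan $\Sigma$ whose rays are the weight vectors coming from the jet-scheme computation (including the two vectors $(1,0,1)$ and $(1,0,2)$ from the $y$-axis, which the paper records just before Proposition~\ref{vE60}), verify by determinant checks that $\Sigma$ is a regular refinement of the dual Newton fan, and then invoke Varchenko's theorem to conclude that $\mu_\Sigma$ is an embedded resolution. The paper's proof is terser---it simply points to Figure~\ref{resE60} for the explicit triangulation and asserts that the regularity check reduces to the determinant condition---but the logical structure is identical to yours.
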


\begin{proof} By  \cite{Va,O2, mag} (see also \cite{MP} for a summary),  an embedded resolution of $X\subset \mathbb{C}^3$ can be obtained by constructing a regular subdivision of the dual Newton fan of $X\subset \mathbb{C}^3$. The dual Newton fan  $\Sigma$ for $E_{6,0}$ is presented in Figure \ref{dualE60} .
 \begin{figure}[!ht]
	\includegraphics[scale=0.5,height=7cm,width=12cm]{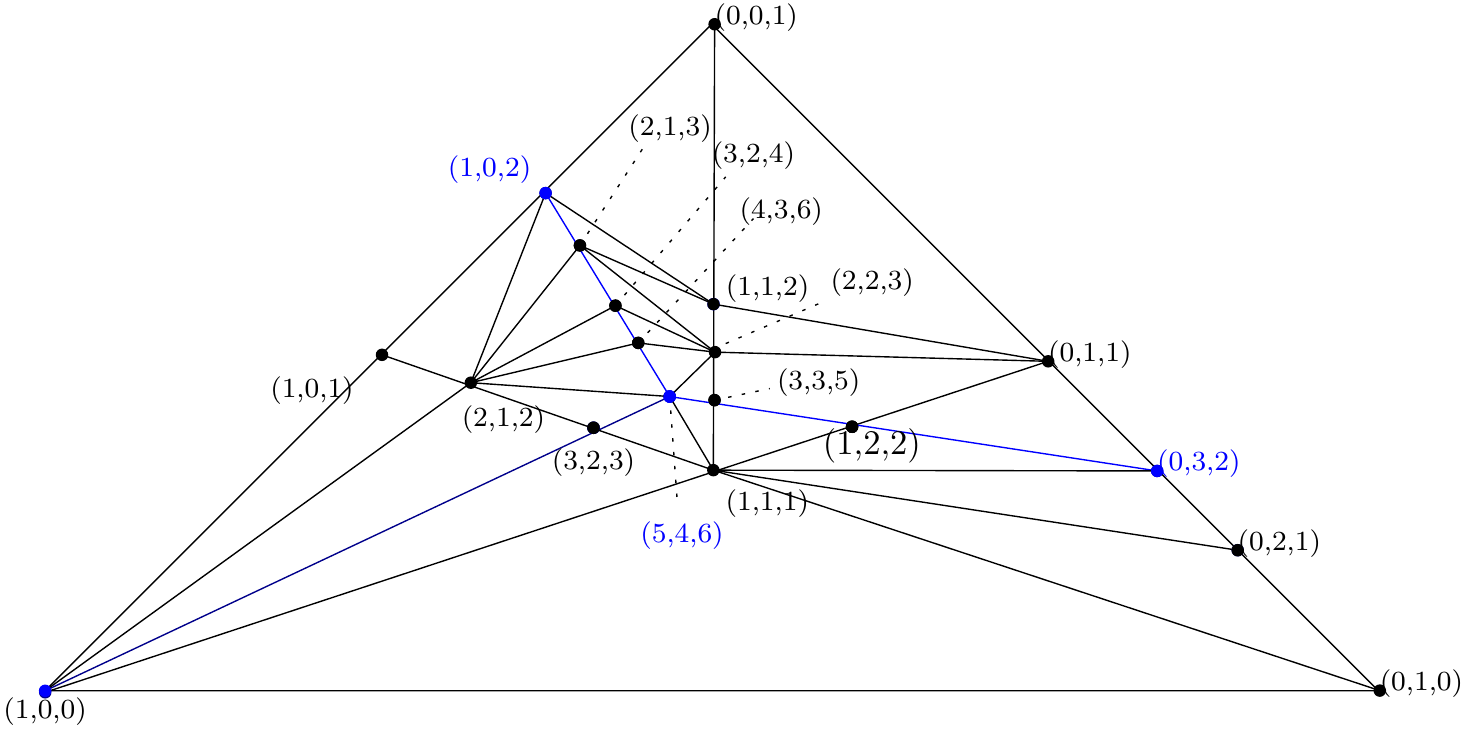}
	\caption{An embedded resolution of $E_{6,0}$}
	\label{resE60}
\end{figure}

\noindent  In Figure $4$, we give a regular subdivision $\Sigma$ where the rays (cones of dimension 1) are the lines supported by the vectors given in proposition \ref{vE60}. To see that this is a regular subdivision, it is sufficient to show that each cone is regular (means that the determinant of the matrix whose columns are any three vectors generating a cone of $\Sigma$ equals $1$). Moreover the 1-dimensional cones (rays) are in bijective correspondence with the components of the exceptional divisors. 

\end{proof}

\section{RTP-singularities of type $A_{k-1,l-1,m-1}$}

\noindent   The singularity of $X\subset \mathbb{C}^3$ defined by  the equations:
    \begin{itemize}
    \item $k\geq \ell \geq m$,
    $$z^3+xz^2-(x+y^k+y^\ell+y^m)y^kz+y^{2k+\ell}=0,$$
    \item $k=\ell < m$,
    $$z^3+(x-y^k)z^2-(x+y^{k}+y^m)y^kz+y^{2k+m}=0.$$
        \end{itemize}
is called $A_{k-1,l-1,m-1}$-type singularity where  $k,\ell,m\geq 1$.

\subsection{Jet Schemes and toric Embedded resolution of $A_{k-1,l-1,m-1}$ when $k=l\le m$}

\noindent The singular locus is $\{y=z=0\}$. So we compute the jets schemes over $\{y=z=0\}$.
 The graph representing the irreducible components of the  jet schemes of $A_{k-1,l-1,m-1}$ is on the  figure \ref{jetA}

\begin{thm}
With preceding notation, the monomial valuations associated with the vectors  
\begin{itemize}
\item $(0,1,1), (0,1,2), \dots (0,1,k+m)$
\item $(s,1,s),\dots,(s,1,m+k-s)$ $ 1\le s \le k-1$
\item $(k,1,k),\dots,(k,1,m)$
\end{itemize}
belong to $EV(X)$. Moreover, these vectors give a toric birational map $\mu_\Sigma:Z_\Sigma\longrightarrow \mathbb{C}^3$ which is an embedded resolution of $X\subset \mathbb{C}^3$ (in the neighborhood of the origin) such that the components of the exceptional divisor of $\mu_\Sigma$ correspond to the monomial valuations defined by them; hence they correspond to  the irreducible components of the $m$-th jet schemes of $X$ (centered at the singular locus and the intersection of $X$ with the coordinate hyperplanes).    
\end{thm}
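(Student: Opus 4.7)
The plan is to follow the pipeline developed in Section 3 and carried out in detail for $E_{6,0}$ in the previous section. One computes the irreducible components of the jet schemes $X_m^{Sing}=\pi_{m,0}^{-1}(V(y_0,z_0))$ level-by-level, identifies the essential ones (those defined by hyperplane coordinates and carrying a monomial weight vector different from that of their parent), then checks that the resulting weight vectors, together with the coordinate rays $(1,0,0),(0,1,0),(0,0,1)$, assemble into a regular fan $\Sigma$ refining the dual Newton fan of $X$, and finally invokes Theorem 2.1 (Newton non-degeneracy) together with Varchenko's theorem \cite{Va} to conclude that the toric morphism $\mu_\Sigma:Z_\Sigma\to \mathbb{C}^3$ is an embedded resolution. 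For this case the only $1$-dimensional coordinate subvariety of $X$ is the $x$-axis $\{y=z=0\}$, which coincides with $\mathrm{Sing}(X)$, so no auxiliary jet schemes on coordinate hyperplanes need to be considered.

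For the inductive computation of the jet schemes, $X_1^{Sing}=V(y_0,z_0)$ of weight vector $(0,1,1)$ is the starting component. At level $m$, for each irreducible component $\mathcal{C}_{m-1}$ of $X_{m-1}^{Sing}$ with ideal $I$, one computes $F_m$ modulo $I$ and factors it; each irreducible factor, together with $I$, cuts out a component of $\pi_{m,m-1}^{-1}(\mathcal{C}_{m-1})$. The dominant monomials $z^3$, $xz^2$, $y^{2k}z$, $y^{2k+m}$ of $f$ govern the branching, which splits according to how many of the initial coefficients $x_0, x_1,\ldots$ are forced to vanish. Three systematic families of essential components emerge: the chain inside $\{x_0=0\}$ yields weight vectors $(0,1,c)$ for $c=1,\dots,k+m$; the intermediate branches, where $x_0=\cdots=x_{s-1}=0$ but $x_s$ is free ($1\le s\le k-1$), yield $(s,1,c)$ for $s\le c\le m+k-s$; and the deepest branch where $x_0,\dots,x_{k-1}$ all vanish yields $(k,1,c)$ for $k\le c\le m$. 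This matches the lists in the statement. As in $E_{6,0}$, some intermediate components are defined by Hirzebruch--Jung type equations $z_j^3+x_i^\alpha y_j^\beta=0$ or $A_1$-type equations; these are non-essential, and stratifying each such component into its regular and singular loci, together with the irreducibility of jet schemes of rational surface singularities (\cite{Mu,Tor}), shows that their inverse images under $\pi_{n,m}$ remain irreducible with unchanged weight vector, hence contribute no new monomial valuations.

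All the listed vectors have middle coordinate $1$ and thus lie in the affine plane $\{y=1\}\subset \mathbb{R}^3$. Their projections $(a,c)$ are precisely the lattice points of the pentagonal region with vertices $(0,1),(1,1),(k,k),(k,m),(0,k+m)$. Since every lattice point of this region is listed, any triangulation using all of them as vertices is unimodular (each triangle has Euclidean area $1/2$), so the corresponding lifted $3$-dimensional cones satisfy
\[
\det\!\begin{pmatrix}a_1&a_2&a_3\\1&1&1\\c_1&c_2&c_3\end{pmatrix}=\pm 1,
\]
which is the regularity condition in $\mathbb{Z}^3$. Adjoining the coordinate rays and filling in the cones bridging the boundary of the pentagon to the coordinate rays by consecutive boundary rays yields a regular fan $\Sigma$ whose support is the first octant. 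A direct inspection of the Newton polyhedron of $f$ shows that the inner normals of its compact $2$-dimensional faces (notably $(1,1,1)$ and the normal associated with the edge ending on the $y$-axis at $(0,2k+m,0)$) all belong to the listed vectors, so every ray of the dual Newton fan $\Sigma_N$ is a ray of $\Sigma$; moreover the extra rays of $\Sigma$ all lie on the wall $\{v_1=0\}$, which in $\Sigma_N$ is only subdivided by the coordinate rays, so $\Sigma$ indeed refines $\Sigma_N$. Varchenko's theorem, applicable by Theorem 2.1, then yields that $\mu_\Sigma:Z_\Sigma\to \mathbb{C}^3$ is an embedded resolution of $X$, and the non-coordinate rays of $\Sigma$ are in bijection with both the exceptional divisors of $\mu_\Sigma$ and the essential components of the jet schemes.

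The main obstacle is the inductive jet-scheme description itself: tracking essential and non-essential components as $m$ grows up to the quasi-degree $2k+m$, carefully factoring $F_m$ modulo each component's ideal (now involving the cross-terms $xz^2$ and $xy^k z$ absent from $E_{6,0}$), and ruling out extra monomial valuations hidden inside the Hirzebruch--Jung substrata that appear at each branching. Once the list of essential vectors is pinned down, the regularity and refinement checks reduce to routine determinant and containment computations in the pentagonal lattice polygon above.
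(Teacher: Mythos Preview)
Your overall strategy coincides with the paper's: extract the weight vectors from the jet graph, assemble them into a regular fan refining the dual Newton fan, and invoke Varchenko. Your observation that the listed vectors are exactly the lattice points of the pentagon with vertices $(0,1),(1,1),(k,k),(k,m),(0,k+m)$ in the slice $\{v_2=1\}$, so that any full triangulation is automatically unimodular by Pick's theorem, is a genuinely cleaner way to see regularity than the paper's case-by-case $3\times 3$ determinant checks.

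However, the refinement step has a real gap. The assertion that ``the extra rays of $\Sigma$ all lie on the wall $\{v_1=0\}$'' is false: almost all of the listed vectors have $v_1>0$. More substantively, having every ray of $\Sigma_N$ among the rays of $\Sigma$ is not sufficient for $\Sigma$ to refine $\Sigma_N$; you must also arrange that every $2$-dimensional wall of $\Sigma_N$ is a union of $2$-cones of $\Sigma$. In the slice this means the internal segments of $\Sigma_N$---for instance the horizontal segment from $(0,1,k)$ to $(k,1,k)$ and the anti-diagonal from $(0,1,k+m)$ to $(k,1,m)$---must be unions of edges of your triangulation. An arbitrary unimodular triangulation of the pentagon need not respect these internal segments (a diagonal of your triangulation could cross the line $c=k$ transversally), so ``any triangulation'' does not work. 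The paper avoids this by first locating which listed vectors lie on each wall of $\Sigma_N$ and then subdividing each maximal cone $C_1,\ldots,C_5$ of $\Sigma_N$ separately; refinement is then automatic. You should either imitate that, or argue explicitly that because every lattice point on each wall of $\Sigma_N$ is among your vertices, one can choose a unimodular triangulation of the pentagon compatible with those walls.

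A secondary point: the ``bridging'' cones joining the pentagon boundary to the coordinate rays $e_1,e_3$ also need their regularity checked (these are the paper's decompositions of $C_1$, $C_2$, $C_5$); your one-sentence treatment does not cover, for example, the degenerate situation along the edge $(0,1)$--$(1,1)$, where $(1,1,1)=(1,0,0)+(0,1,1)$.
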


\begin{figure}[H]
\setlength{\unitlength}{0.5cm}
\includegraphics[scale=0.8,height=16cm,width=16cm]{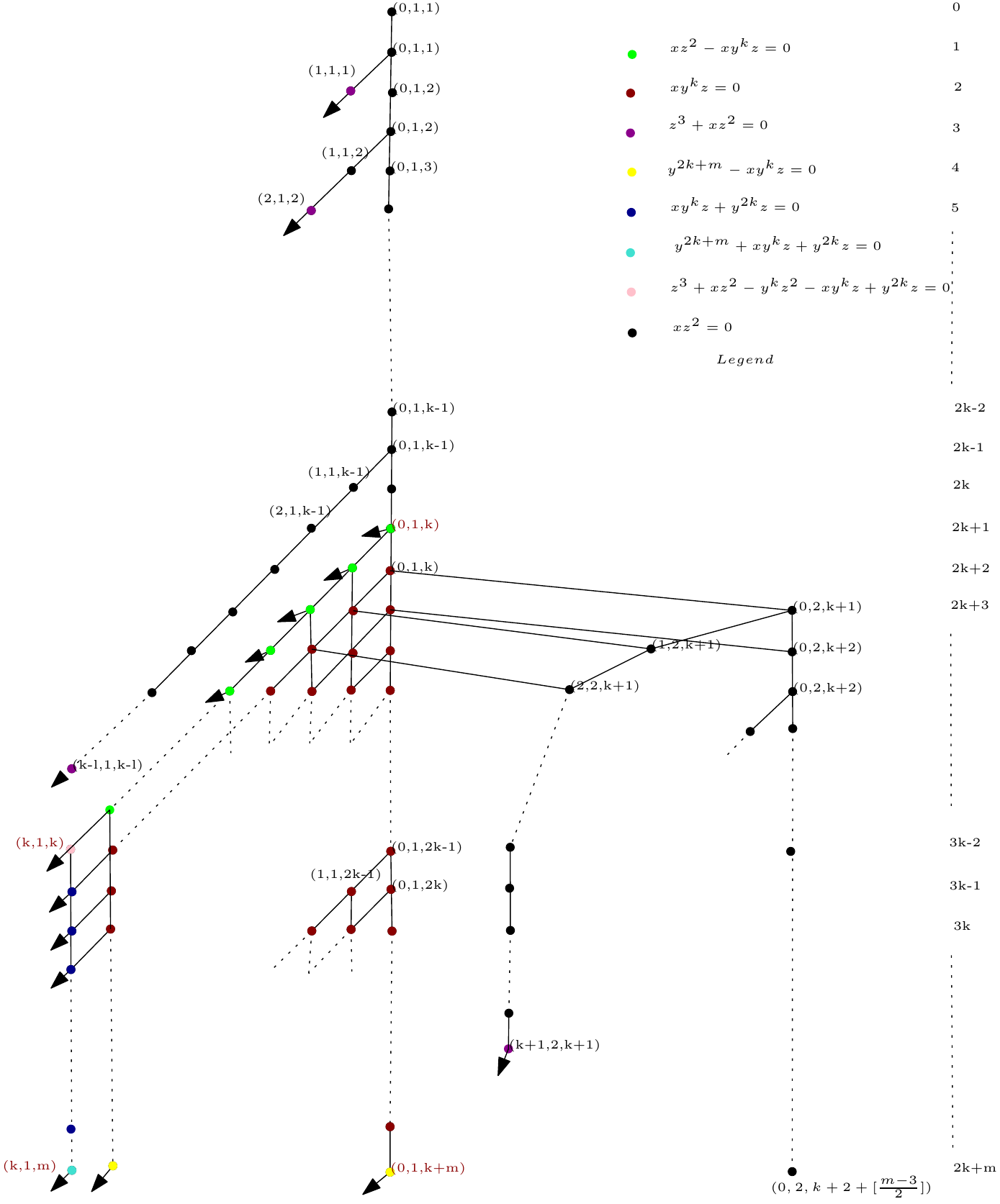}
\caption{ Jets schemes of $A_{k-1,k-1,m-1}$ }
\label{jetA}
\end{figure}

\begin{proof}
The first part of the theorem results from the jet graph. Before showing that the given vectors give a simplicial regular decomposition of the dual Newton fan of $A_{k-1,k-1,m-1}$, let us study their positions in the fan:
\begin{itemize}
\item for all $0\le s \le k$, we have $(s,1,k)\in [(0,1,k),(k,1,k)]$
\item for all $0\le s \le k$,  we have $(s,1,k+m-s)\in [(0,1,k+m),(k,1,m)]$: \\
$$\left \vert \begin{array}{ccc}
\  k&0&s\\
\ 1&1&1\\
\ m&k+m&k+m-s\\
\end{array}\right \vert=\left \vert \begin{array}{ccc}
\  k&0&s\\
\ 0&1&1\\
\ -k&k+m&-s\\
\end{array}\right \vert=0$$
\item the vectors $(\alpha,1,l+\alpha+1)$ for all $0\le \alpha \le k$ are aligned, for each $0\le l\le k$.
\end{itemize}

\begin{figure}[H]
\setlength{\unitlength}{0.5cm}
\includegraphics[scale=0.5,height=5cm,width=16cm]{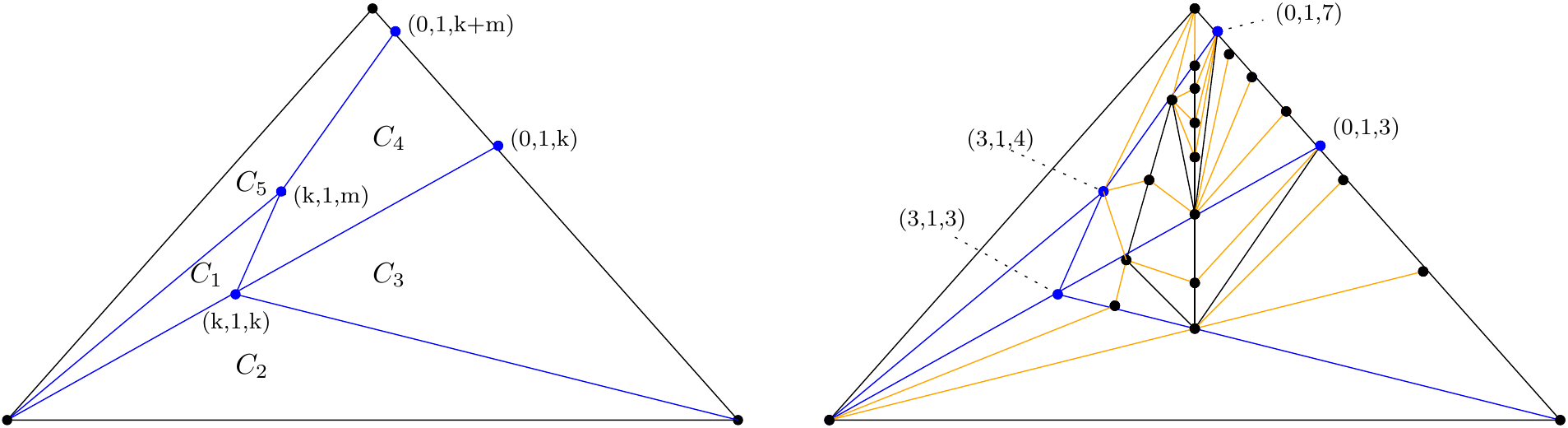}
\caption{Dual Newton fan of $A_{k-1,k-1,m-1}$ and an embedded resolution for $A_{2,2,3}$}
\end{figure}

\noindent Now let us decompose each subcone $C_i$ into regular cones: \\

\noindent {\bf Decomposition of $C_1$}: The cone $C_1$ contains the vectors $(k,1,\beta)$ for  $k\le \beta \le m-1$. They are on the skeleton of the fan. For $k\le \beta\le m-1$, we have:
$\left \vert \begin{array}{ccc}
\  k&k&1\\
\ 1&1&0\\
\ \beta&\beta+1&0\\
\end{array}\right \vert=1.$\\

\noindent {\bf Decomposition of $C_2$}: The cone $C_2$ contains the vectors $(1,1,1),\dots, (k,1,k)$ which are 
on the skeleton. For $0\le \alpha \le k-1$ we have:
$\left \vert \begin{array}{ccc}
\  1&\alpha&\alpha+1\\
\ 0&1&1\\
\ 0&\alpha&\alpha+1\\
\end{array}\right \vert=1.$\\

\noindent {\bf Decomposition of $C_3$}: To decompose the cone $C_3$, we first add successively an edge between the vectors $(k-1,1,k), (k-2,1,k-2),(k-3,1,k), \dots $ with the last vector being $(0,1,k)$ if $k$ is odd and with $(0,1,0)$ if $k$ is even. Then we obtain that the vectors $(\alpha,1,\alpha),\dots ,( \alpha,1,k)$ are in the same triangles (see Figure 7). Now let us add those vectors and the vectors on the associated edges successively.
 
 \begin{figure}[H]
\includegraphics [height=6cm, width =16cm]{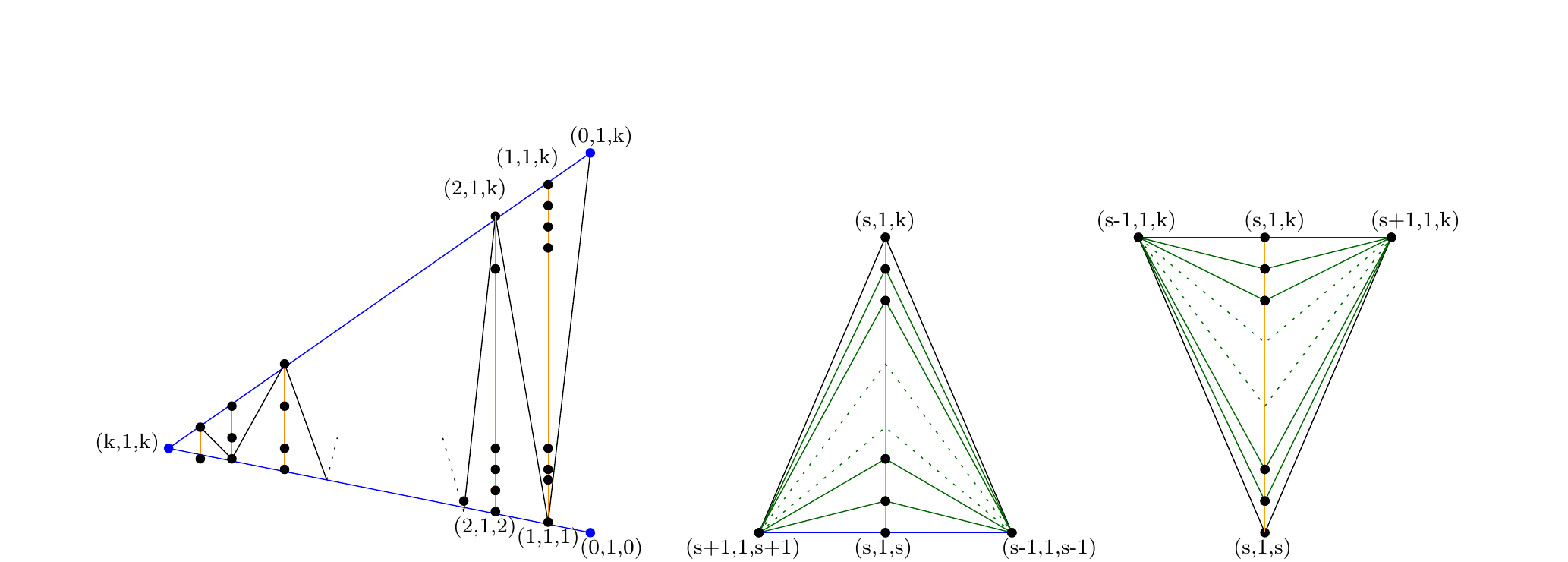}
\caption{Decomposition of the cone $C_3$ and of its two types of subcones}
\end{figure}

\noindent Each new subcone will  be regular as we only have one of the following two cases:
\begin{itemize}
\item {\bf Case 1}: for $\alpha \le \beta \le k-1$ we have
$$\left \vert \begin{array}{ccc}
\  \alpha-1&\alpha&\alpha\\
\ 1&1&1\\
\ k&\beta&\beta+1\\
\end{array}\right \vert=1\ \ and \ \ \left \vert \begin{array}{ccc}
\  \alpha+1&\alpha&\alpha\\
\ 1&1&1\\
\ k&\beta&\beta+1\\
\end{array}\right \vert=1$$
\item{\bf Case 2}:
$$\left \vert \begin{array}{ccc}
\  \alpha+1&\alpha&\alpha\\
\ 1&1&1\\
\ \alpha+1&\beta&\beta+1\\
\end{array}\right \vert=1\ \ and \ \ \left \vert \begin{array}{ccc}
\  \alpha-1&\alpha&\alpha\\
\ 1&1&1\\
\ \alpha-1&\beta&\beta+1\\
\end{array}\right \vert=-1$$
\end{itemize}

\noindent  {\bf Decomposition of $C_4$}: The cone $C_4$ is decomposed by adding successively the edges between the vectors $(k,1,m),(k-1,1,k),(k-2,1,m+2),\dots$ with the last vector being $(1,1,k)$ if $k$ is odd and with  $(1,1,k+m-1)$ if $k$ is even. Then let us add successively the vectors and the associated  edges  $(s,1,\alpha)$ for $k\le \alpha \le k+m-s$.
 \begin{figure}[H]
\includegraphics [height=5cm,width =15cm]{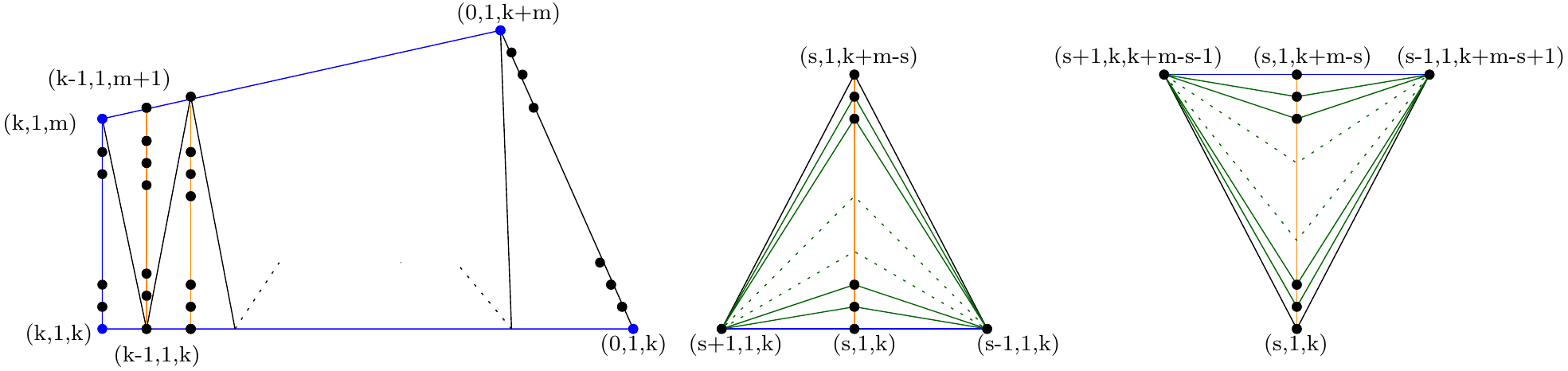}
\caption{Decomposition of the cone $C_4$ and of its two types of subcones}
\end{figure}

\noindent Each new subcone will  be regular as we have: for $0\le s \le k-1$ and for $k\le \beta \le k+m-s$,  
$$\left \vert \begin{array}{ccc}
\  s-1&s&s\\
\ 1&1&1\\
\ k&\beta&\beta+1\\
\end{array}\right \vert=1 \ \ and \ \ \left \vert \begin{array}{ccc}
\  s+1&s&s\\
\ 1&1&1\\
\ k&\beta&\beta+1\\
\end{array}\right \vert=1 $$ 
or
$$\left \vert \begin{array}{ccc}
\  s+1&s&s\\
\ 1&1&1\\
\ k+m-s-1&\beta&\beta+1\\
\end{array}\right \vert=1 \ \ and \ \ \left \vert \begin{array}{ccc}
\  s-1&s&s\\
\ 1&1&1\\
\ k+m-s+1&\beta&\beta+1\\
\end{array}\right \vert=1$$
\noindent {\bf Decomposition of $C_5$}: The cone $C_5$ contains the vectors $(s,1,k+m-s)$ for $0\le s \le k$ which are on the skeleton. For $0\le \alpha \le k$,   we have:  
$$\left \vert \begin{array}{ccc}
\  0&\alpha&\alpha+1\\
\ 0&1&1\\
\ 1&k+m-\alpha&k+m-1-\alpha\\
\end{array}\right \vert=1,  \left \vert \begin{array}{ccc}
\  k&1&0\\
\ 1&0&0\\
\ m&0&1\\
\end{array}\right \vert=1.$$
\end{proof}

\subsection{Jet Schemes and toric Embedded resolution of $A_{k-1,l-1,m-1}$ when $k\ge l \ge m$}

\noindent  The graph representing the irreducible components of the  jet schemes of $A_{k-1,l-1,m-1}$ projecting on the singular locus $\{y=z=0\}$ is given by Figure $9$ below. 
 \begin{figure}[H]
\setlength{\unitlength}{0.5cm}
\includegraphics[scale=0.7,height=14cm,width=16cm]{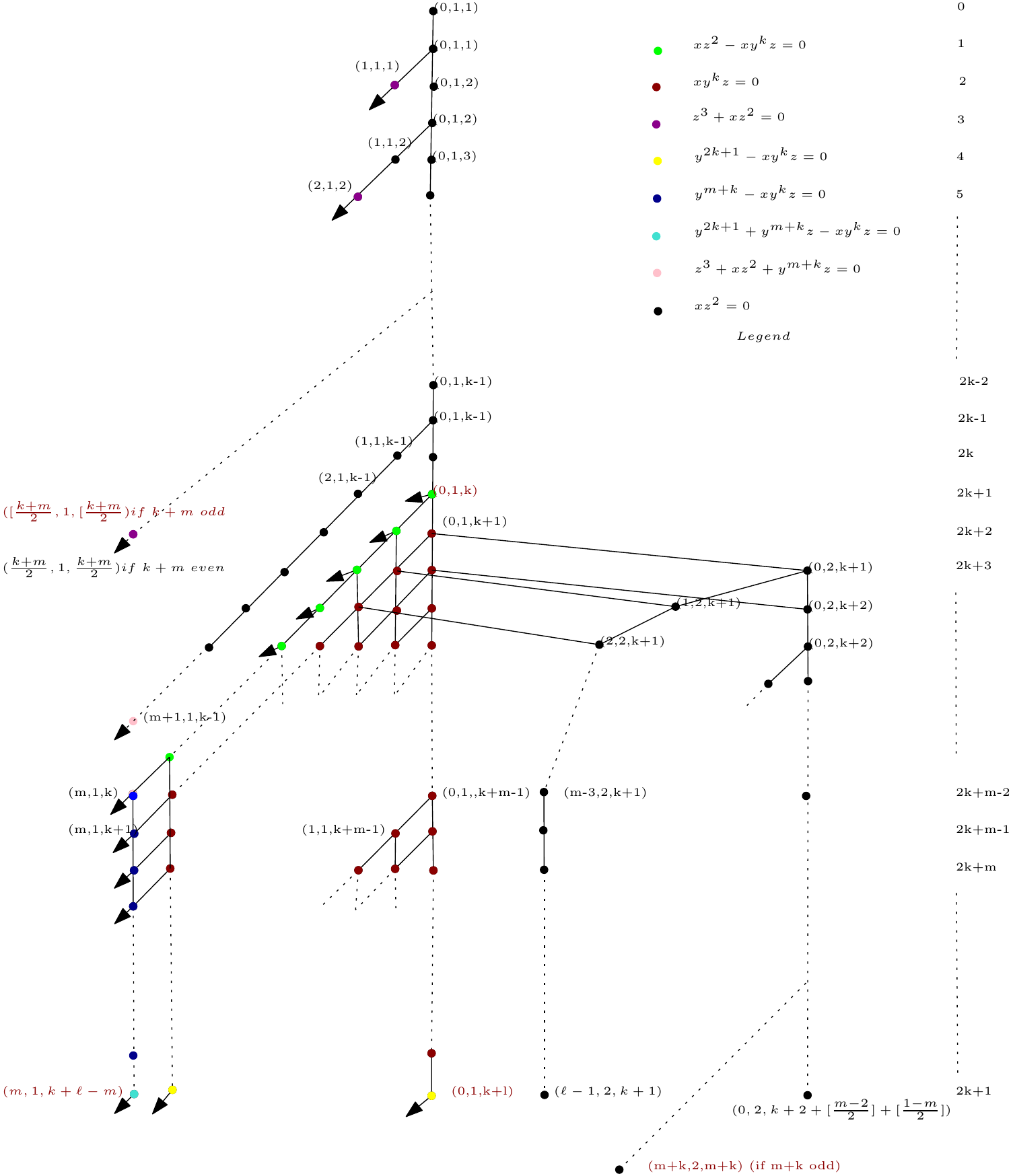}
\caption{ Jets schemes of $A_{k-1,l-1,m-1}$ }
\end{figure}

\begin{thm} \label{aa}
Let $X \subset \mathbb{C}^3$ be a surface of type $A_{k-1,l-1,m-1}$ with $k\geq l \geq m$. The monomial valuations associated with the vectors:  
\begin{itemize}
\item $(0,1,1), (0,1,2),\dots (0,1,k+l)$
\item $(s,1,s),\dots ,(s,1,l+k-s)$ $ 1\le s \le m-1$
\item $(m,1,m),\dots ,(m,1,k+l-m)$
\item $(m+r,1,m+r),\dots,(m+r,1,k-r)$ with $1\le r \le E(\frac{k-m}{2})$
\end{itemize}
belong to $EV(X)$. Moreover, these vectors give a toric birational map $\mu_\Sigma:Z_\Sigma\longrightarrow \mathbb{C}^3$ which is an embedded resolution of $X\subset \mathbb{C}^3$ (in the neighborhood of the origin) such that the components of the exceptional divisor of $\mu_\Sigma$ correspond to the monomial valuations defined by the vectors; hence they correspond to  irreducible components of the $m$-th jet schemes of $X$ (centered at the singular locus and the intersection of $X$ with the coordinate hyperplanes).    
\end{thm}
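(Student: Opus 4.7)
The plan is to follow the same two-stage strategy as in the proof of the preceding theorem on $A_{k-1,k-1,m-1}$. \emph{Stage one:} read off from the jet graph in Figure 9 that the listed vectors are precisely the weight vectors $v(\mathcal{C}_s)$ attached to the essential components $\mathcal{C}_s \in EC(X)$ obtained by computing, level by level, the irreducible components of $X_s^{Sing}$ and of $X_s^Y$ for $Y$ a coordinate axis contained in $X$; this identifies the first claim of the theorem (membership in $EV(X)$). \emph{Stage two:} verify that the same vectors, placed as rays inside the first quadrant of $\mathbb{R}^3$, produce a regular simplicial subdivision $\Sigma$ that refines the dual Newton fan of $A_{k-1,l-1,m-1}$. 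Since these singularities are Newton non-degenerate (Theorem of Section 2), Varchenko's theorem \cite{Va,O2,mag} will then immediately imply that the induced toric morphism $\mu_\Sigma : Z_\Sigma \longrightarrow \mathbb{C}^3$ is an embedded resolution of $X$, and the final clause follows from the bijection between the rays of $\Sigma$ and the irreducible components of the exceptional divisor of $\mu_\Sigma$.

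For stage two I would proceed exactly as for $A_{k-1,k-1,m-1}$: locate each family of vectors inside the standard subcones $C_1,\ldots,C_5$ of the dual Newton fan, check the collinearity relations that place the families $(s,1,s),\ldots,(s,1,l+k-s)$ and $(m,1,m),\ldots,(m,1,k+l-m)$ on the appropriate segments of the Newton skeleton (a $3\times 3$ determinant vanishing argument, identical in form to the one displayed in the previous proof), and finally decompose each $C_i$ by inserting rays one at a time and certifying regularity of each new $3$-cone by a unimodular determinant computation. For the analogues of $C_1$, $C_2$ and $C_5$ — bounded by the coordinate rays and by the rays along the Newton boundary — the determinant patterns are essentially identical to those already written out, so nothing substantially new happens there.

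The main new feature, and the step I expect to be the genuine obstacle, is the treatment of the cone $C_3$ (and symmetrically of $C_4$), where the additional interior family $(m+r,1,m+r),\ldots,(m+r,1,k-r)$ with $1\leq r\leq E(\tfrac{k-m}{2})$ now appears. In the case $k=l\le m$ the ``staircase'' of essential rays stopped at height $m$; here, because $l<m$ fails and one has instead $k\geq l\geq m$, the jet scheme produces further essential vectors penetrating deeper into $C_3$, indexed by the parameter $r$. I would therefore organize the refinement of $C_3$ in two phases: first add the horizontal strata $(m,1,m),\ldots,(m,1,k+l-m)$ along the top edge; then recursively on $r=1,2,\ldots,E(\tfrac{k-m}{2})$ insert the family $(m+r,1,m+r),\ldots,(m+r,1,k-r)$, at each step checking that every newly created $3$-cone is unimodular. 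The determinant computations fall once again into the two cases already displayed for the earlier $C_3$, namely
\[
\left\vert\begin{array}{ccc} \alpha-1 & \alpha & \alpha \\ 1 & 1 & 1 \\ k & \beta & \beta+1 \end{array}\right\vert = 1, \qquad
\left\vert\begin{array}{ccc} \alpha+1 & \alpha & \alpha \\ 1 & 1 & 1 \\ \alpha+1 & \beta & \beta+1 \end{array}\right\vert = 1,
\]
and their mirror versions for $C_4$; the nontrivial bookkeeping lies in certifying that this recursion terminates cleanly, with the apex of the staircase matching the appropriate parity of $k-m$, analogously to the even/odd dichotomy for $k$ in the symmetric case.

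The main difficulty is thus purely combinatorial: one must keep control of the recursive fan refinement in the interior of $C_3$ and $C_4$, ensure the staircase closes up at $r=E(\tfrac{k-m}{2})$ without leaving any non-simplicial or non-regular leftover cone, and in particular verify unimodularity at the corners where the interior rays meet the rays along the Newton skeleton. Once this verification is complete, the Varchenko--Oka framework closes the proof in the same way as for the $E_{6,0}$ case and for $A_{k-1,k-1,m-1}$.
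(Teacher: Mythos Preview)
Your proposal is essentially correct and follows the same two-stage strategy as the paper: extract the vectors from the jet graph, then show they give a regular refinement of the dual Newton fan so that Varchenko's theorem applies. The paper's proof is considerably terser than your outline: after two collinearity checks (showing $(m+r,1,k-r)\in[(m+k,2,m+k),(m,1,k)]$ and $(\alpha,1,k+l-\alpha)\in[(m,1,k+l-m),(0,1,k+l)]$), it simply declares that when $\tfrac{m+k}{2}\in\mathbb{Z}$ the decomposition is identical to the $A_{k-1,k-1,m-1}$ case, and only for $m+k$ odd does it add two explicit $3\times 3$ determinants to show the cones adjacent to the skeleton vertex $(m+k,2,m+k)$ are regular. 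Your anticipated ``parity of $k-m$'' difficulty at the apex is exactly this point; the concrete object to name is the vertex $(m+k,2,m+k)$ of the dual Newton fan, which your staircase must reach without leaving a non-regular cone.
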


\begin{proof} As above, we first study the positions of the vectors given in theorem \ref{aa}:
\begin{itemize}
\item $(m+r,1,k-r)\in [(m+k,2,m+k),(m,1,k)]$: $$\left \vert \begin{array}{ccc}
\  m+k&m+k&m\\
\ 1&2&1\\
\ k+r&m+k&k\\
\end{array}\right \vert=\left \vert \begin{array}{ccc}
\  r&k&m\\
\ 0&1&1\\
\ -r&m&k\\
\end{array}\right \vert=\left \vert \begin{array}{ccc}
\  r&k&m\\
\ 0&1&1\\
\ 0&m+k&k+m\\
\end{array}\right \vert= 0$$
\item $(\alpha,1,k+l-\alpha)\in [(m,1,k+l-m),(0,1,k+l)]$ for $0\le \alpha \le m$ :
 $$\left \vert \begin{array}{ccc}
\  m&0&\alpha\\
\ 1&1&1\\
\ k+l-m&k+l&k+l-\alpha \\
\end{array}\right \vert=\left \vert \begin{array}{ccc}
\  m&0&\alpha\\
\ 0&1&0\\
\ -m&k+l&-\alpha\\
\end{array}\right \vert=0$$
\end{itemize}

 \noindent If $\frac{m+k}{2} \in \mathbb Z$, then the dual fan can be decomposed in the same way as for the case $A_{k-1,k-1,m-1}$.  Otherwise, we have to show the subcones containing the vector $(m+k,2,m+k)$ are regular. In this case $E(\frac{k-m}{2})=\frac{k-m-1}{2}$ and $(m+E(\frac{k-m}{2}),1,m+E(\frac{k-m}{2}))=(\frac{k+m-1}{2},1,\frac{k+m-1}{2})$. We have :
 $$\left \vert \begin{array}{ccc}
\ \frac{k+m-1}{2}&\frac{k+m-1}{2}&m+k\\
\ 1&1&2\\
\ \frac{k+m+1}{2}&\frac{k+m-1}{2}&m+k\\
\end{array}\right \vert=\left \vert \begin{array}{ccc}
\  0&\frac{k+m-1}{2}&m+k\\
\ 0&1&2\\
\ 1&\frac{k+m-1}{2}&m+k\\
\end{array}\right \vert=1\ \ and \ \ \left \vert \begin{array}{ccc}
\  1&\frac{k+m-1}{2}&m+k\\
\ 0&1&2\\
\ 0&\frac{k+m-1}{2}&m+k\\
\end{array}\right \vert=1.$$
\end{proof}

\begin{figure}[H]
\setlength{\unitlength}{0.6cm}
\includegraphics[scale=1,height=5cm,width=16cm]{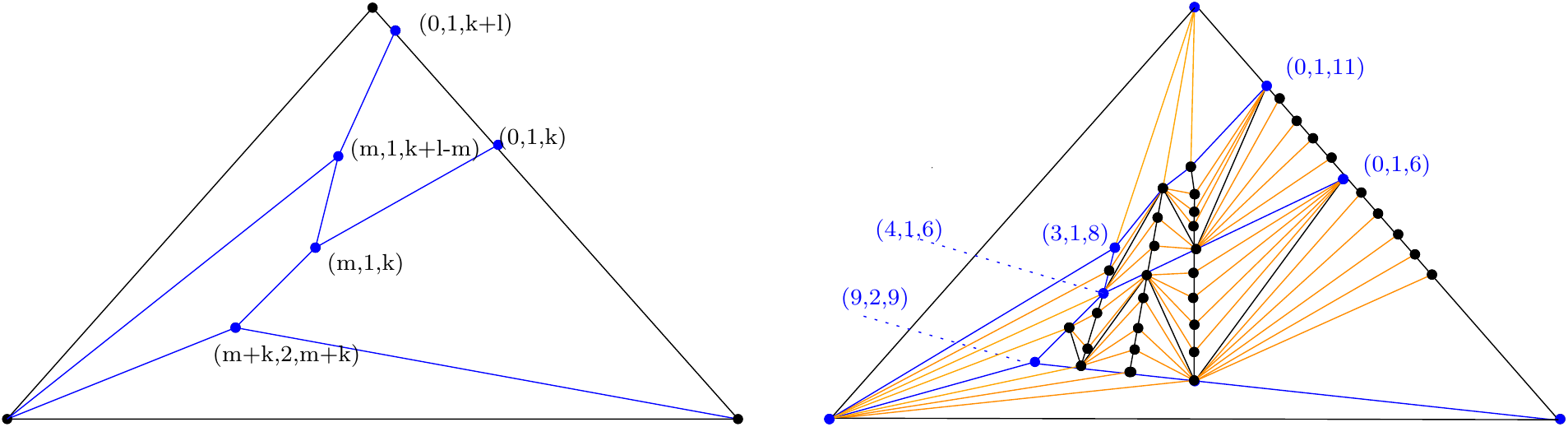}
\caption{ Dual Newton fan of $A_{k-1,l-1,m-1}$ with $k\geq l \geq m$, and a resolution of $A_{5,4,2}$ }
\end{figure}

\section{Jet Schemes and Toric Embedded Resolution of $B_{k-1,m}$}

\noindent  The singularity of $X\subset \mathbb{C}^3$ defined by  the equations:
  \begin{itemize}
	\item $m=2\ell$, 
	$$z^3+xz^2-(y^{k+1}+y^{\ell})y^kz-xy^{2k+1}=0,$$
	\item $m=2\ell-1$, 
	$$z^3+(x-y^{\ell-1})z^2-y^{2k+1}z-xy^{2k+1}=0.$$
	\end{itemize} is called $B_{k-1,m}$-type singularity with  $k\geq 2$ and $m\geq 3$.\\
	 {\bf In the case where $m=2l$},  the jet schemes and the toric embedded resolution behaves as in the case of $A_{m,k,l}$; so, let's just present the jet graph presenting the irreducible components of the jets schemes projecting over the singular locus $\{y=z=0\}$ and the axis $\{x=z=0\}$ included in $X$:
\begin{figure}[H]
\setlength{\unitlength}{0.28cm}
\includegraphics[scale=0.7,height=14cm,width=14cm]{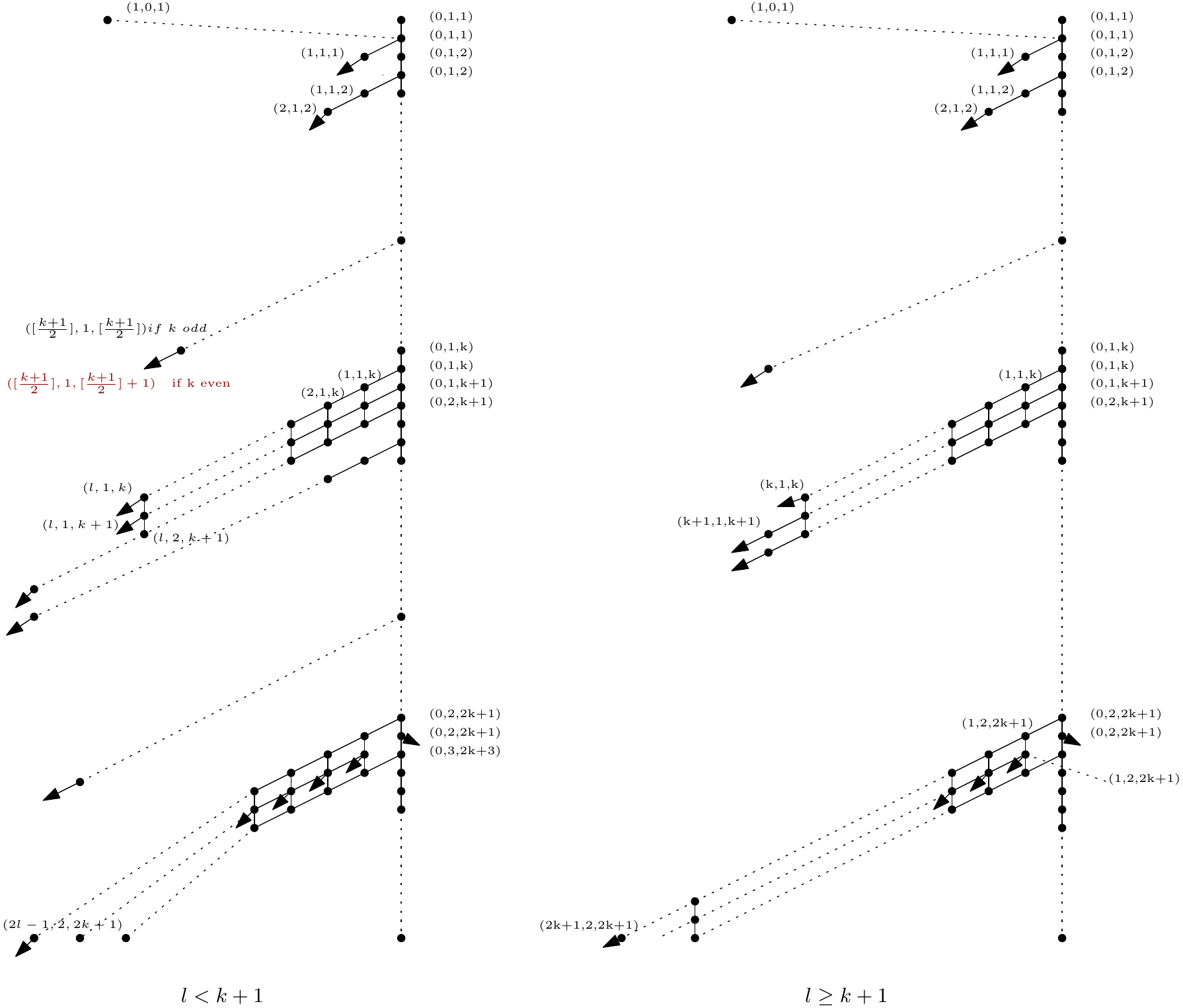}
\caption{Jets schemes of $B_{k-1,m}$ when $m=2l$ }
\end{figure}

\begin{thm}
Let $X \subset \mathbb{C}^3$ be a surface of type $B_{k-1,2l}$. The monomial valuations associated with the vectors:  
\begin{itemize}
\item $(0,1,1), (0,1,2),\dots, (0,1,k+1)$
\item $(1,1,1),\dots,(1,1,k+1$
\item \dots
\item $(l,1,l),\dots,(l,1,k+1)$
\item $(l+1,1,l+1),\dots,(l+1,1,k-1)$
\item $(l+2,1,l+2),\dots,(l+1,1,k-2)$
\item \dots
\item $(E((l+k)/2),1,E((l+k)/2))$, and $(E((l+k)/2),1,E((l+k)/2)+1)$ if $k+l$ is odd.
\item $(0,2,2k+1)\dots(2l-l,2,2k+1)$
\end{itemize}
belong to $EV(X).$ Moreover, these vectors give a toric birational map $\mu_\Sigma:Z_\Sigma\longrightarrow \mathbb{C}^3$ which is an embedded resolution of $X\subset \mathbb{C}^3$ (in the neighborhood of the origin) such that the components of the exceptional divisor of $\mu_\Sigma$ correspond to the monomial valuations defined by them; hence they correspond to the irreducible components of the $m$-th jet schemes of $X$ (centered at the singular locus and the intersection of $X$ with the coordinate hyperplanes).    
\end{thm}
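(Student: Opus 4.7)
The plan is to mirror the argument used for $A_{k-1,l-1,m-1}$ in the previous section. The proof splits naturally into two parts: first, reading off the jet graph (Figure 11) that the listed vectors are weight vectors of essential components, so that the associated monomial valuations lie in $EV(X)$; second, showing that these vectors produce a regular simplicial subdivision of the dual Newton fan of $X \subset \mathbb{C}^3$, so that Varchenko's theorem \cite{Va} (applicable thanks to Theorem 2.1) furnishes the embedded resolution, with the rays of the resulting fan in bijection with the components of the exceptional divisor.

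For the first part, I would perform a level-by-level analysis of $X_m^{Sing}$ and of $X_m^{Y}$ for $Y = \{x=z=0\}$, proceeding exactly as in the $E_{6,0}$ computation in Section 4. At each level the irreducible components come in two flavors: those defined by hyperplane coordinates (these are the essential ones, yielding monomial valuations by Proposition 2.3 of \cite{MP}) and those defined by an additional non-monomial equation (handled by stratifying the auxiliary hypersurface into its regular and singular loci, as in the treatment of $\mathcal{C}_{6,1}$ for $E_{6,0}$). The weight vector $(a,b,c)$ of an essential component $\mathcal{C}_m$ is then read off as $(\nu_{\mathcal{C}_m}(x), \nu_{\mathcal{C}_m}(y), \nu_{\mathcal{C}_m}(z))$, i.e.\ the smallest indices of $x_i, y_i, z_i$ absent from the defining ideal; the triangular families together with the family $(j,2,2k+1)$ for $0\le j\le 2l$ are exactly the labels visible on Figure 11, the latter family coming from components projecting onto $Y \subset X$.

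For the second part, I would first verify by direct $3\times 3$ determinant computations that the candidate vectors lie on the appropriate two-dimensional faces of the dual Newton fan, as was done at the start of the $A_{k-1,k-1,m-1}$ and $A_{k-1,l-1,m-1}$ proofs. Then I would decompose the dual Newton fan into the subcones $C_i$ cut out by the compact faces of the Newton polyhedron of $f$, and subdivide each $C_i$ by the zig-zag pattern illustrated in Figures 7 and 8, with regularity of each elementary simplicial cone reduced to showing that the corresponding $3\times 3$ determinant equals $\pm 1$.

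The main obstacle will be the central region near the tip vector $(E((k+l)/2),1,E((k+l)/2))$, where the parity of $k+l$ modifies the combinatorics: when $k+l$ is odd, a second tip vector $(E((k+l)/2),1,E((k+l)/2)+1)$ must be added and the cones adjacent to this pair require an ad hoc determinant check of the same flavor as the one at the end of the proof for $A_{k-1,l-1,m-1}$ with $k \ge l \ge m$. A secondary care point is the gluing between the $(s,1,t)$-subdivision and the sub-fan spanned by the vectors $(j,2,2k+1)$ along their common two-dimensional face of the Newton polyhedron: one must verify that the two decompositions agree on this face and that the mixed simplicial cones with one vertex in each family are regular. Once these checks are done, Varchenko's theorem closes the proof.
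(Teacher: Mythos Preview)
Your proposal is correct and follows essentially the same approach as the paper. The paper is in fact far more terse: it simply asserts that for $m=2l$ ``the jet schemes and the toric embedded resolution behaves as in the case of $A_{m,k,l}$,'' presents the jet graph, and states that the listed vectors decompose the dual Newton fan into regular subcones; your outline spells out in somewhat greater detail (the parity distinction at the tip, the gluing with the $(j,2,2k+1)$ family) precisely the checks that this one-line reduction leaves implicit.
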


\noindent The vectors given in the theorem allows us to decompose the corresponding dual Newton fan into regular subcones and find an embedded resolution of the singularity.

\begin{figure}[H]
\setlength{\unitlength}{0.5cm}
\scalebox{0.65}{\input{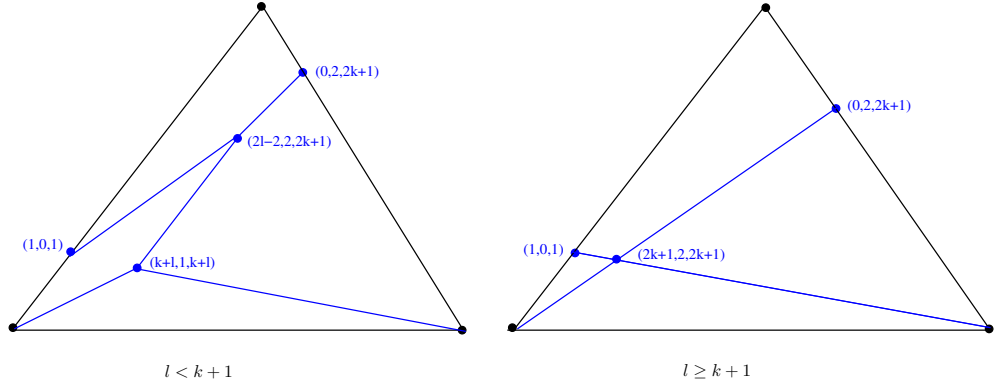_t}}
\caption{ Dual Newton fans of  $B_{k-1,m}$ when $m=2l$}
\end{figure}

\noindent Two embedded resolutions for two special cases look as the following:

\begin{figure}[!ht]
\includegraphics [width =14cm]{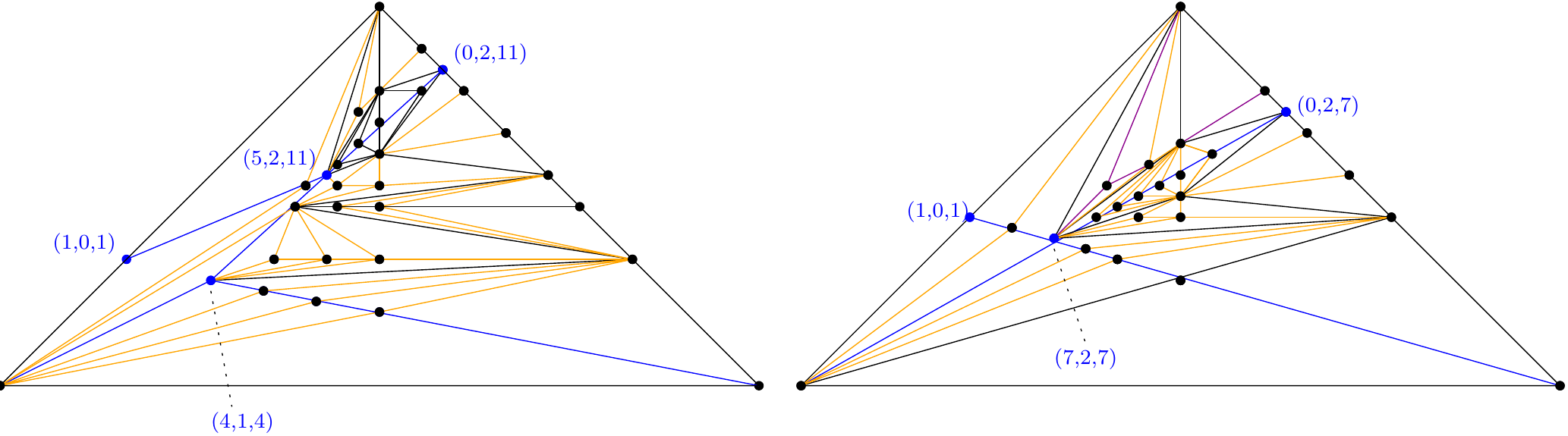}
\caption{Embedded resolution of $B_{4,6}$ and  of $B_{2,10}$}
\end{figure}

\noindent {\bf In the case of $B_{k-1,2l-1}$}, there is an amazing subclass (see Section $2$ below) for which the jet schemes give a resolution which is not a subdivision of the dual Newton fan of the singularity. So this case needs to be treated in details. There are two sub-cases to be considered which are the cases $k+1\le l$ and $k\geq l$. \\
 {\bf Let us first treat the case $k+1\le l$}: we start by computing the irreducible components of the jet schemes projecting on the singular locus $\{y=z=0\}$ and the axe $\{x=z=0\}$ included in $X$. And, by computing the associated vectors we obtain the figure \ref{jetB}:

\begin{thm}
Let $X$ be of type  $B_{k-1,2l-1}$ with $k+1\le l$. The monomial valuations associated with the vectors:
\begin{itemize}
\item $(0,1,1), (0,1,2),\dots, (0,1,k+1)$
\item $(1,1,1),\dots,(1,1,k+1)$
\item \dots
\item $(k,1,k),(k,1,k+1)$
\item $(k+1,1,k+1)$
\item $(0,2,2k+1)\dots(2k+1,2,2k+1)$
\end{itemize}
belong to $EV(X).$ Moreover there exists a toric birational map $\mu_\Sigma:Z_\Sigma\longrightarrow \mathbb{C}^3$ which is an embedded resolution of $X\subset \mathbb{C}^3$ such that the irreducible components of the exceptional divisor of $\mu_\Sigma$ correspond to the irreducible components of the $m$-th jet schemes of $X$ (centered at the singular locus and the intersection of $X$ with the coordinate hyperplane). This yields a construction of $\mu_\Sigma $ (not canonical).     
\end{thm}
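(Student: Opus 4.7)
The plan is to proceed in exactly the same four-step pattern that was successful for $E_{6,0}$ and $A_{k-1,l-1,m-1}$, taking advantage of the fact that the regime $k+1\le l$ is the ``tame'' sub-case where the dual Newton fan of $X$ does admit a regular refinement by vectors coming from jet schemes (unlike the companion case $k\ge l$ treated separately). First I would compute the irreducible components of $X_m^{Sing}$ where $Sing(X)=\{y=z=0\}$, and of $X_m^Y$ for the axis $Y=\{x=z=0\}\subset X$, by induction on $m$ up to the quasi-degree of the defining polynomial. At each step, one takes the inverse image under $\pi_{m,m-1}$ of each component $\mathcal{C}_{m-1}$ computed at the previous level, reduces the polynomial $F_m$ from the expansion $(\ast)$ modulo the defining ideal of $\mathcal{C}_{m-1}$, and splits the resulting locus into irreducible pieces. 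This produces the jet graph of Figure \ref{jetB}, and reading off the vertices whose defining equations are hyperplane coordinates yields precisely the family of weight vectors listed in the statement, giving the first assertion on $EV(X)$.

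Along the way, exactly as in the $E_{6,0}$ analysis, some components will be defined by a single non-monomial equation of $A_1$-type or Hirzebruch--Jung type; these do \emph{not} give essential components, but their inverse images under the truncation maps $\pi_{m,n}$ must be tracked to make sure they do not create new essential components at higher levels. Here I would invoke the irreducibility of jet schemes of rational singularities \cite{Mu,Tor} (together with the stratification trick used for $\mathcal{C}_{6,1}$ in Section 4) to dispose of these branches once and for all. The hypothesis $k+1\le l$ is crucial at this stage: it prevents the vector $(2l-2,2,2k+1)$ from lying outside the dual Newton fan, which is precisely the pathology making the $k\ge l$ case qualitatively different.

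Next I would verify that the listed vectors, together with the coordinate vectors $e_1,e_2,e_3$, span a regular fan $\Sigma$ refining the dual Newton fan of $X$. Organize them into horizontal ``slices'' along the $(*,1,*)$-plane and the single slice $(*,2,2k+1)$ coming from the $Y$-axis. Inside each slice, three consecutive vectors generate a cone whose determinant is $\pm 1$ by the same $2\times 2$ minor expansions used in the $A_{k-1,l-1,m-1}$ proof. One then has to check the bridging cones between the two slices and with the coordinate rays; the only non-routine checks involve the determinants that mix $(k+1,1,k+1)$ with the vectors $(j,2,2k+1)$ for $j=0,\dots,2k+1$, and these reduce to elementary identities.

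Finally, since by \cite{mag} the $B_{k-1,2l-1}$ singularity is non-degenerate with respect to its Newton polyhedron and $\Sigma$ is a regular refinement of the dual Newton fan, Varchenko's theorem \cite{Va} gives that the induced toric morphism $\mu_\Sigma:Z_\Sigma\to\mathbb{C}^3$ is an embedded resolution of $X\subset\mathbb{C}^3$. The bijective correspondence between the rays of $\Sigma$ and the irreducible components of the exceptional divisor of $\mu_\Sigma$ is standard toric geometry, and by construction these rays are generated by the weight vectors of the essential components determined in Step 1. The step I expect to be the main obstacle is the jet-scheme induction itself: in the middle range $k\le m\le 2k+1$, several non-monomial branches coexist, and verifying that none of them contributes an essential weight vector outside the listed family requires careful bookkeeping analogous to (but longer than) the case-by-case analysis done in Section 4 for $E_{6,0}$.
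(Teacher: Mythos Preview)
Your proposal is correct and follows essentially the same approach as the paper, which in fact gives almost no detail for this theorem beyond the single sentence ``The computations are similar to the case $B_{k-1,2l}$; the associated vectors with the jet schemes give a subdivision of the dual Newton fan, thus an embedded resolution, of the singularity'' together with Figure~15. Your four-step outline (jet-scheme induction, disposal of non-monomial branches via \cite{Mu,Tor}, regularity of the fan by determinant checks, and the conclusion via Varchenko's theorem) is exactly the template the paper uses throughout and refers back to here.
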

\begin{figure}[H]
\includegraphics[scale=0.7,height=12cm,width=15cm]{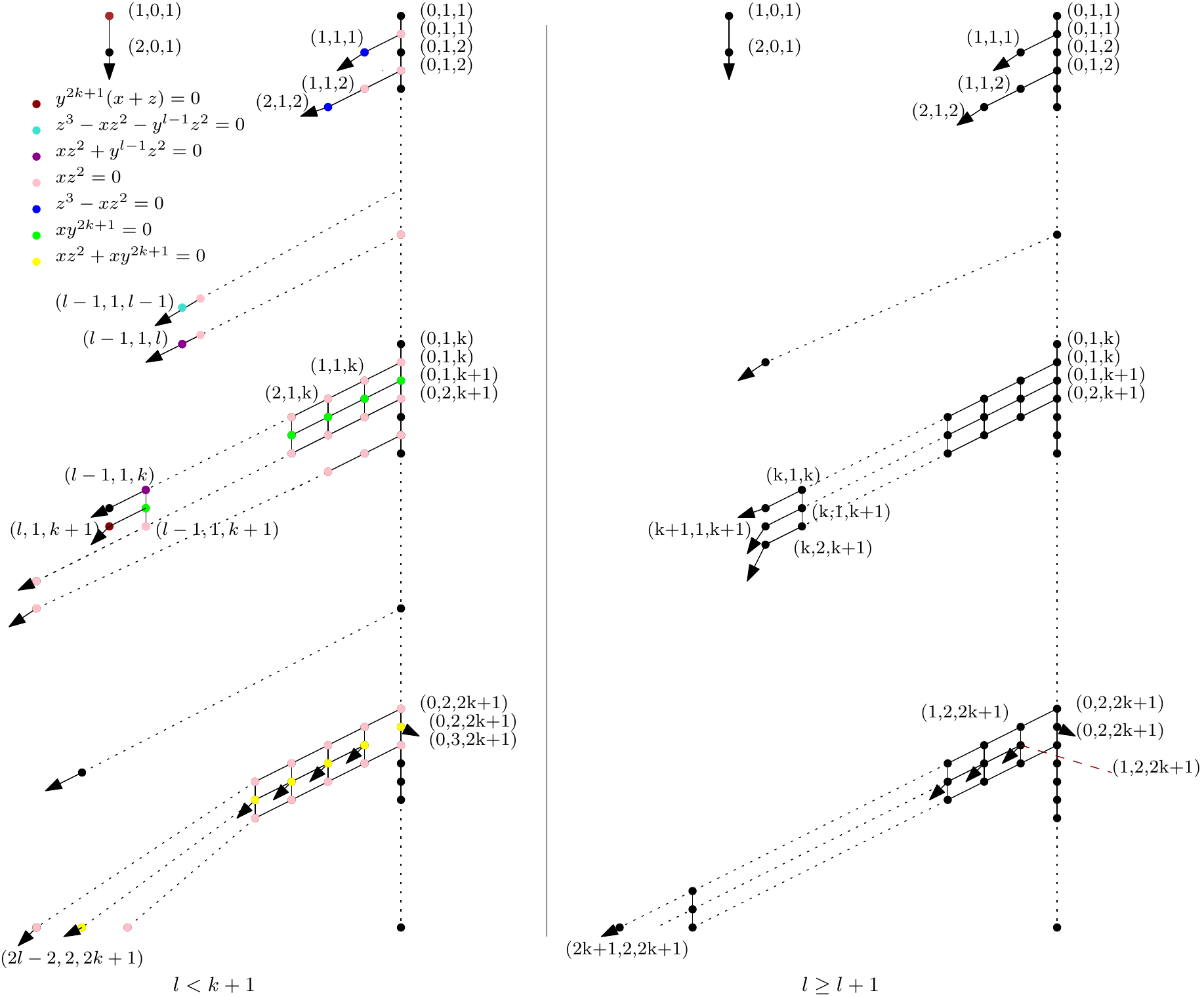}
\caption{Jets schemes of $B_{k-1,2l-1}$ }
\label{jetB}
\end{figure}

\noindent The computations are similar to the case $B_{k-1,2l}$; the associated vectors with the jet schemes give a subdivision of the dual Newton fan, thus an embedded resolution, of the singularity. 

\begin{thm} Let $X$ be of type  $B_{k-1,2l-1}$ for $l\le k$. The monomial valuations associated with the vectors  
\begin{itemize}
\item $(0,1,1), (0,1,2),\dots, (0,1,k+1)$
\item $(1,1,1),\dots,(1,1,k+1)$
\item \dots
\item $(l-1,1,l-1),\dots, (l-1,1,k+1)$
\item $(l,1,k+1)$
\item $(0,2,2k+1)\dots(2l-2,2,2k+1)$
\end{itemize}
belong to $EV(X).$ Moreover there exists a toric birational map $\mu_\Sigma:Z_\Sigma\longrightarrow \mathbb{C}^3$ which is an embedded resolution of $X\subset \mathbb{C}^3$ such that the irreducible components of the exceptional divisor of $\mu_\Sigma$ correspond to the irreducible components of the $m$-th jet schemes of $X$ (centered at the singular locus and the intersection of $X$ with the coordinate hyperplane). This yields a construction  of $\mu_\Sigma $ (not canonical).     
\end{thm}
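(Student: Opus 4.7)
The plan is to follow the template of Section 4 (the $E_{6,0}$ case), combined with a direct chart-by-chart verification at the very end --- because, as already flagged in Section 2 and Section 3, the fan $\Sigma$ we produce here does \emph{not} refine the dual Newton fan of $X$ (the vectors $Q=(2k-l+2,1,2k-l+2)$ and $V=(2k-l,1,2k-l+1)$ of Figure 1 are absent from the list in the statement), so Varchenko's theorem is not available.

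First I would build the jet graph. Starting from $X_1^{\mathrm{Sing}}=V(y_0,z_0)$ and from the initial components of $X_m^Y$ for $Y=\{x=z=0\}$, I would compute $\pi_{m,m-1}^{-1}(\mathcal{C}_{m-1,j})$ recursively, reducing the coefficient $F_m$ of $f(\sum x_it^i,\sum y_it^i,\sum z_it^i)$ modulo the defining ideal of each $\mathcal{C}_{m-1,j}$ and decomposing the result into monomial strata plus (possibly) a non-monomial piece. Exactly as in the $E_{6,0}$ analysis, the non-monomial strata that arise are expected to be of Hirzebruch--Jung or $A_1$ type, whose higher jet schemes stay irreducible and contribute only dashed arrows of the resulting jet graph. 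Reading off the weight vectors $v(\mathcal{C}_m)$ of the essential components should produce precisely the list of vectors in the statement. Assembling these vectors together with $e_1,e_2,e_3$ into a fan $\Sigma$ whose adjacency is dictated by the jet graph, I would check regularity by a finite list of $3\times 3$ determinant computations, as was done in the $A_{k-1,l-1,m-1}$ case. The shape of $\Sigma$ is essentially the $B_{k-1,2l}$ subdivision truncated along the ray $(l,1,k+1)$, replacing the broken path through $V$ and $Q$ of Figure 1 by a direct edge; this encodes the contraction of the $(-1)$-curve $Q$ predicted by the minimal-resolution discussion.

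The hard part will be to show that the resulting $\mu_\Sigma\colon Z_\Sigma\to\mathbb{C}^3$ really is an embedded resolution of $X\subset\mathbb{C}^3$. For each maximal cone $\sigma$, on the affine chart $U_\sigma\simeq\mathbb{C}^3$ with coordinates $(u,v,w)$, I would write the pullback $\mu_\Sigma^{\ast}f$ as a monomial $u^av^bw^c$ times a residual factor $g_\sigma(u,v,w)$, and verify (i) that $\{g_\sigma=0\}$ is smooth along every exceptional stratum of $U_\sigma$, and (ii) that $\{g_\sigma=0\}\cup\{uvw=0\}$ has simple normal crossings. The delicate charts are those meeting the ray $(l,1,k+1)$ (which corresponds to the $(-3)$-curve of the minimal resolution): there the mixed term $xy^{2k+1}$ of $f$ must be handled by a direct residual computation, since the safety net of Newton non-degeneracy is not available. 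Once these charts are checked, the bijection between the rays of $\Sigma$, the exceptional components of $\mu_\Sigma$, and the essential components of the jet schemes follows immediately, closing the proof.
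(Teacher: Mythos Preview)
Your approach is essentially the paper's, and the key insight---that $\Sigma$ fails to refine the dual Newton fan so Varchenko must be replaced by explicit chart computations---is correctly identified. The paper organizes the endgame more sharply than you do, and one point of yours is slightly off.

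The paper splits the support of $\Sigma$ into two polygons,
\[
J=[(1,0,0),(0,1,0),(0,0,1),(1,0,1),(l,1,k+1),(2l-2,2,2k+1),(l-1,1,l-1)]
\]
and
\[
K=[(1,0,0),(1,0,1),(l,1,k+1),(2l-2,2,2k+1),(l-1,1,l-1)].
\]
On $J$ the jet-derived subdivision \emph{is} compatible with the dual Newton fan (the $B_{k-1,2l}$ computations carry over verbatim), so Varchenko applies and no residual computation is needed there. Only on $K$ is direct verification required, and the paper reduces this to three explicit families of cones: $[(1,0,0),(l-1,1,s),(l-1,1,s+1)]$ for $l-1\le s<k$, then $[(1,0,0),(l-1,1,k),(l,1,k+1)]$, and finally $[(1,0,0),(1,0,1),(l,1,k+1)]$. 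For each, the monoidal transformation is written down and the strict transform is seen to be smooth and transversal to the exceptional divisors.

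The point where your outline is imprecise: the first of these three families does \emph{not} meet the ray $(l,1,k+1)$, yet it still lies in $K$ and hence still escapes Varchenko. So ``delicate charts $=$ those touching $(l,1,k+1)$'' is too narrow; the correct criterion is membership in $K$. If you had only checked the charts adjacent to $(l,1,k+1)$ you would have left a genuine gap.
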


\begin{figure}[H]
\setlength{\unitlength}{0.40cm}
\includegraphics[scale=0.5,height=6cm,width=13cm]{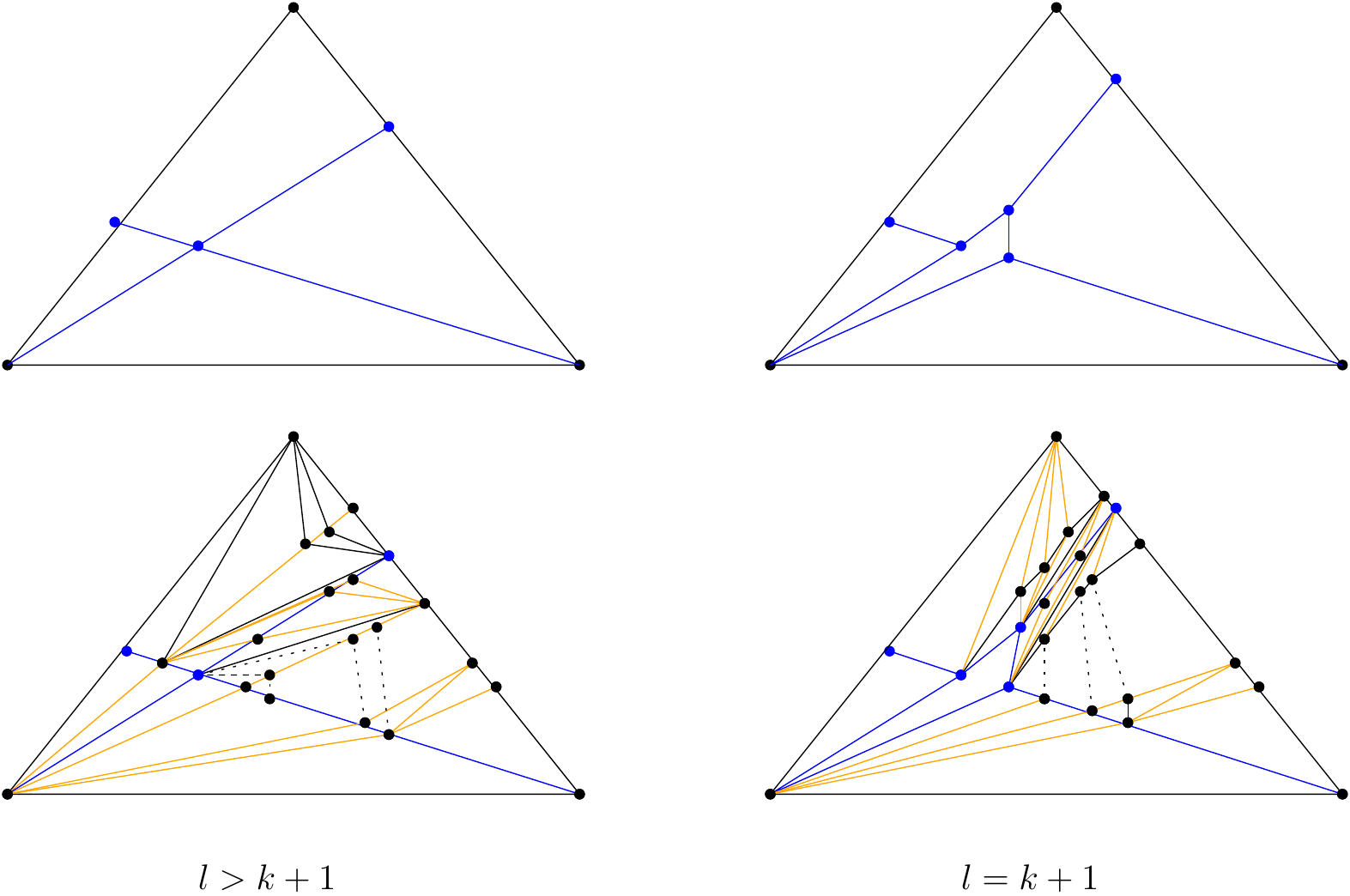}
\caption{Dual Newton fan of $B_{k-1,2l-1}$ for $l>k+1$ ($resp.\ l=k+1$) and an embedded resolution}
\end{figure}
\noindent {\bf In the case where $X$ is of type $B_{k-1,2l-1}$ with $l\le k$}, the corresponding dual Newton fan of the singularity is  given with the right hand figure of Figure $16$. 
\begin{figure}[H]
\setlength{\unitlength}{0.40cm}
\includegraphics[scale=0.75,height=4cm,width=17cm]{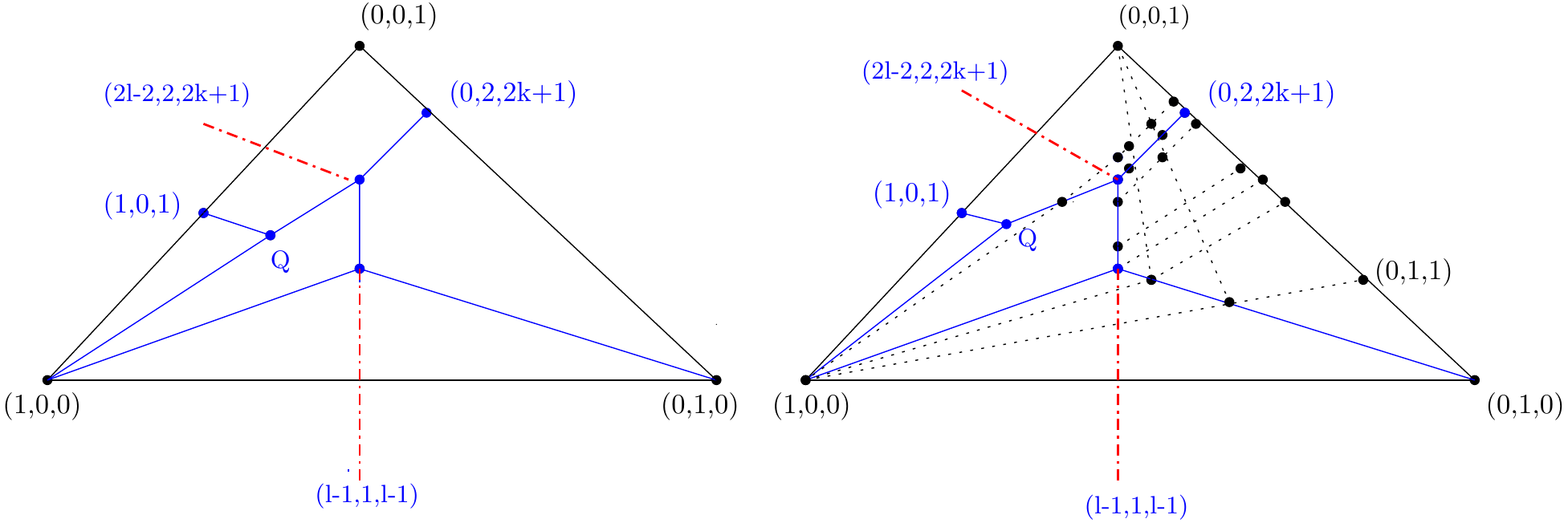}
\caption{Dual Newton fan of $B_{k-1,2l-1}$ with $l\le k$ and it is with the vectors of Theorem $6.3$}
\end{figure}

\begin{rem}
The set of vectors above does not contain the vector $Q=(2k-l+2,1,2k-l+2)$, thus the decomposition obtained by these vectors will not be  a regular decomposition of the dual Newton fan of the singularity.
\end{rem}

\begin{proof} Consider the polygons $J=[(1,0,0),(0,1,0),(0,0,1),$ $(1,0,1),(l,1,k+1),(2l-2,2,2k+1),(l-1,1,l-1)]$ and $K=[(1,0,0),(1,0,1),(l,1,k+1),(2l-2,2,2k+1),(l-1,1,l-1)]$ in the dual Newton fan of the singularity. In $J$, the vectors obtained from the jet schemes give a regular subdivision of this polygon (following the computations of $B_{k-1,2l}$). As $J$ is a sub-polygon of the fan, the strict transform of $X$ is regular on these charts. In $K$, 
we find a subdivision by adding an edge from $(1,0,0)$ to $(l,1,k+1)$, another edge from $(1,0,0)$ to $(l-1,1,s)$ for $l-1\le s\le k$ and another edge from $(l,1,k+1)$ to $(l-1,1,k)$. In this way, we obtain a regular subdivision of $K$.

\noindent  {\bf Since $K$ is not compatible with the dual Newton fan, we cannot use Varchenko's theorem to deduce the smoothness of the strict transform of $X$  in the charts corresponding to the subdivision of $K$ by the toric map. So, we should prove this fact:}

\begin{itemize}
\item For this, let us first consider the cone $[(1,0,0),(l-1,1,s),(l-1,1,s+1)]$ for $l-1\le s<k$; the monoidal transformation corresponding to it is:
$$\left \{ \begin{array}{c}
\  x=x_1y_1^{l-1}z_1^{l-1}\\
\  y=y_1z_1\\
 \ z=y_1^{s}z_1^{s+1}\\
\end{array}\right .$$

\noindent Then the total transform of $B_{k-1,2l-1}$ is defined by:
$$\{y_1^{2s+l-1}z_1^{2s+l+1}(y_1^{s-l+1}z_1^{s-l+2}-x_1-y_1^{2k-s-l+2}z_1^{2k-s-l+1}-x_1y_1^{2k-2s+1}z_1^{2k+2s-1})=0\}$$ The strict transform is smooth and transversal to the exceptional divisors defined by $y_1=0$ and $z_1=0.$
\item Now let us consider the cone $[(1,0,0),(l-1,1,k),(l,1,k+1)]$; the monoidal transformation corresponding to it is:
$$\left \{ \begin{array}{c}
\  x=x_1y_1^{l-1}z_1^{l}\\
\  y=y_1z_1\\
 \ z=y_1^{k}z_1^{k+1}\\
\end{array}\right .$$
Then the total transform of $B_{k-1,2l-1}$ is :
$$\{y_1^{2k+l-1}z_1^{2k+l+1}(y_1^{k-l+1}z_1^{k-l}-x_1z_1-1-y_1^{k-l+2}z_1^{k-l+1}-x_1y_1)=0\}.$$ 
The strict transform is smooth and transversal to the exceptional divisors defined by $y_1=0$ and $z_1=0.$
\item Finally let us consider the cone $[(1,0,0),(1,0,1),(l,1,k+1)]$; the monoidal transformation corresponding to it is:
$$\left \{ \begin{array}{c}
\  x=x_1y_1z_1^{l}\\
\  y=z_1\\
 \ z=y_1z_1^{k+1}\\
\end{array}\right .$$
Then the total  transform of $B_{k-1,2l-1}$ is :
$$\{y_1z_1^{2k+l+1}(y_1^2z_1^{k+2-l}-x_1y_1^2z_1-y_1-z_1^{k-l+1}-x_1)=0\}.$$ The strict transform is smooth and transversal to the exceptional divisors defined by $y_1=0$ and $z_1=0.$
\end{itemize}
\begin{figure}[H]
\setlength{\unitlength}{0.7cm}
\includegraphics[scale=0.8,height=4cm,width=14cm]{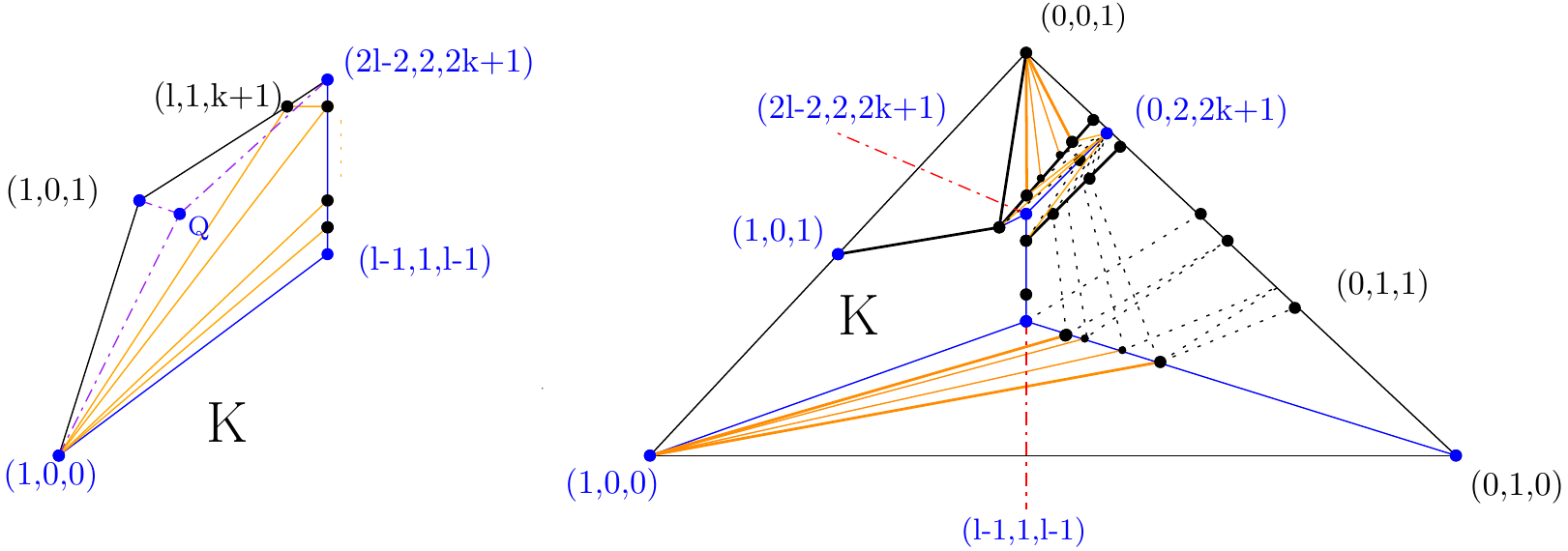}
\caption{The polygon $K$ and an embedded resolution of  $B_{k-1,2l-1}$ with $l\le k$}
\end{figure}
\end{proof}

\section{Jet Schemes and Toric Embedded Resolution of $C_{k-1,l+1}$}

\noindent  The singularity of $X\subset \mathbb{C}^3$ defined by  the equation:
$$z^3+xz^2-\ell x^{\ell-1}y^{2k}z-(x^\ell+y^2)y^{2k}=0$$
is called $C_{k-1,l+1}$-type singularity where $k\geq 1$ and $\ell\geq 2$. For $k=3q-1$, we obtain the jet graph given in Figure \ref{jetC} which represents the irreducible components of the jet schemes of $C_{k-1,l+1}$ projecting on the singular locus $\{y=z=0\}$.
 \begin{figure}[H]
\includegraphics[height=15cm,width=15cm]{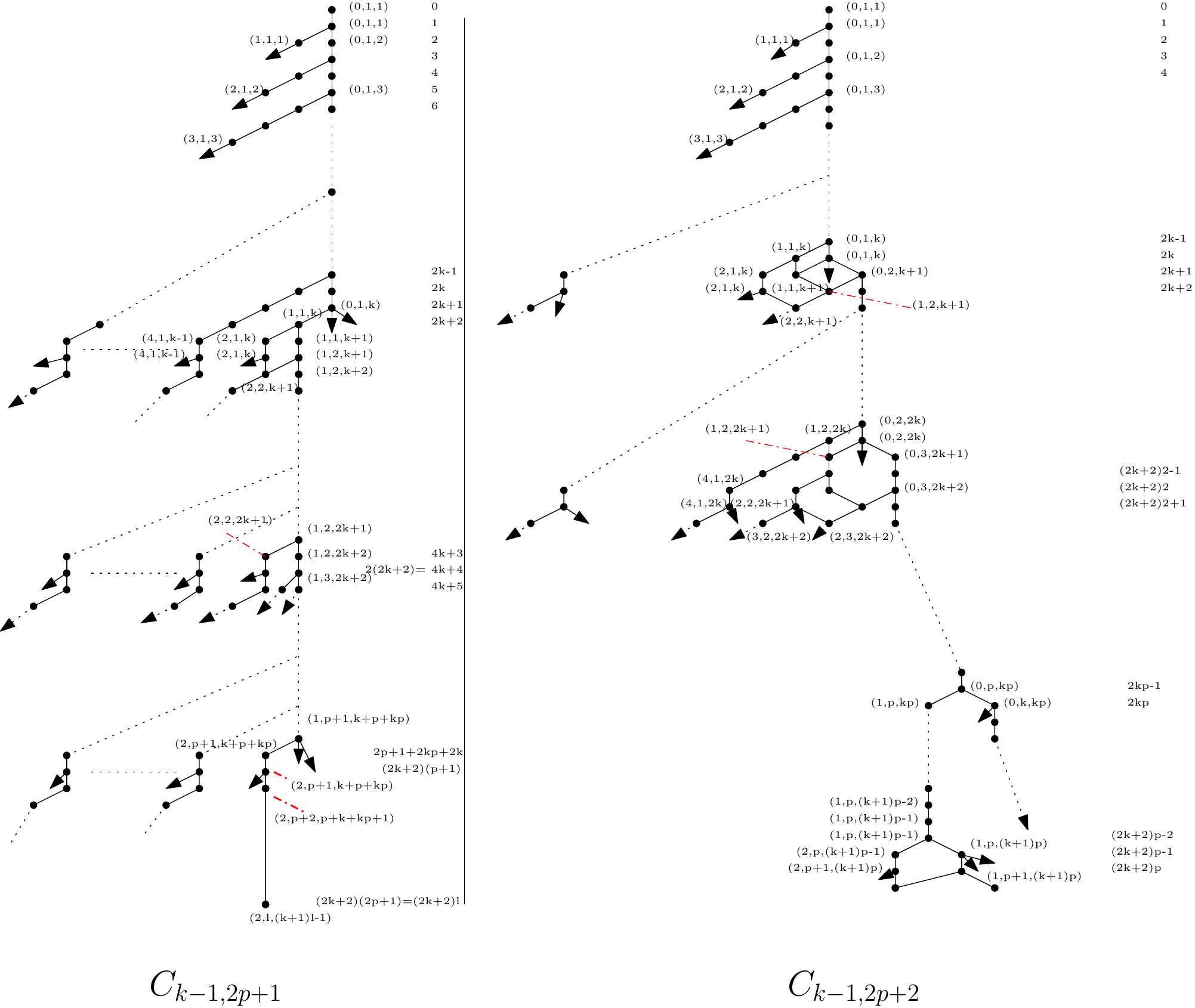}
\caption{Jet schemes of $C_{k-1,2p+2}$ with $k=3q-1$}
\label{jetC}
\end{figure}

\begin{thm}
Let $X$ be a surface singularity of type  $C_{k-1,l+1}$. The monomial valuations associated with the vectors:
\begin{itemize}
\item for { $k=3q-1$} and $l=2p$
\begin{itemize}
\item $(0,1,1), (0,1,2)...(0,1,k)$
\item $(1,1,1), ... ,(1,1,k),(1,1,k+1)$
\item $(2,1,2),... ,(2,1,k)$
\item $(3,1,3),... ,(3,1,k-1)$
\item $(4,1,4),... ,(4,1,k-1)$
\item \dots
\item $(2q-1,1,2q-1), (2q-1,1,2q)$
\item $(2q,1,2q)$
\item $(1,1,k),(1,2,2(k+1)-1),\dots,(1,p,(k+1)p-1)$
\item $(2,1,k),(2,2,2(k+1)-1),(2,3,3(k+1)-1),\dots,(2,l,(k+1)l-1)$
\item $(1,1,k+1),(1,2,2(k+1)),\dots,(1,p,(k+1)p)$
\end{itemize}
\item for { $k=3q-1$} and $l=2p+1$
\begin{itemize}
\item $(0,1,1), (0,1,2)...(0,1,k)$
\item $(1,1,1), ... ,(1,1,k),(1,1,k+1)$
\item $(2,1,2),... ,(2,1,k)$
\item $(3,1,3),... ,(3,1,k-1)$
\item $(4,1,4),... ,(4,1,k-1)$
\item \dots
\item $(2q-1,1,2q-1), (2q-1,1,2q)$
\item $(2q,1,2q)$
\item $(1,1,k),(1,2,2(k+1)-1),\dots,(1,p,(k+1)p-1),(1,p,(k+1)(p+1)-1)$
\item $(2,1,k),(2,2,2(k+1)-1),(2,3,3(k+1)-1),\dots,(2,l,(k+1)l-1)$
\item $(1,1,k+1),(1,2,2(k+1)),\dots,(1,p,(k+1)p)$
\end{itemize}
\end{itemize}
belong to $EV(X).$ Moreover there exists a toric birational map $\mu_\Sigma:Z_\Sigma\longrightarrow \mathbb{C}^3$ which is an embedded resolution of $X\subset \mathbb{C}^3$ such that the irreducible components of the exceptional divisor of $\mu_\Sigma$ correspond to the irreducible components of the $m$-th jet schemes of $X$ (centered at the singular locus and the intersection of $X$ with the coordinate hyperplane). This yields a construction (not canonical) of $\mu_\Sigma.$    
\end{thm}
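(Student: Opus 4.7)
The plan is to follow the same two-step strategy used for $E_{6,0}$ and $A_{k-1,l-1,m-1}$. First, I would read off the essential components and their weight vectors directly from the jet graph in Figure \ref{jetC}. The inductive computation of $X_m^{Sing}$ and $X_m^Y$ (for $Y$ the coordinate axis contained in $X$) proceeds exactly as in the $E_{6,0}$ case: at each level $m$, I would compute $F_m \mod I_{m-1}$ for each component $\mathcal{C}_{m-1}$ at level $m-1$, factor it, and read off the irreducible components of $\pi_{m,m-1}^{-1}(\mathcal{C}_{m-1})$. Non-essential branches are precisely those giving Hirzebruch--Jung or $A_1$ fibers and are encoded by the dashed arrows as in $E_{6,0}$. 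Since each essential component is monomial (Proposition 2.3 of \cite{MP}), this establishes that each listed vector belongs to $EV(X)$. The case split on $l=2p$ versus $l=2p+1$ only affects the length of the $(1,j,\cdot)$-chain near the top of the graph.

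Second, I would show that the collection of vectors provides a regular subdivision $\Sigma$ of the dual Newton fan of $X$. Since $C_{k-1,l+1}$ is Newton non-degenerate by Theorem 2.1, Varchenko's theorem then yields that the associated toric birational map $\mu_\Sigma : Z_\Sigma \to \mathbb{C}^3$ is an embedded resolution, and the bijection between exceptional divisors and rays of $\Sigma$ gives the correspondence with the essential components. To verify regularity of $\Sigma$, I would partition the first octant according to the pieces of the dual Newton fan, verify by a determinant that each listed vector does lie on the face of the dual Newton polyhedron it is claimed to subdivide, and then check that any three consecutive generators of each three-dimensional cone yield a determinant of absolute value $1$. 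This is structurally identical to the cone-by-cone verification performed for $A_{k-1,k-1,m-1}$ in the decomposition of $C_1,\dots,C_5$ there.

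The main obstacle is the combinatorial bookkeeping: the dual Newton fan of $C_{k-1,l+1}$ has several two-dimensional faces (coming from the edges of the Newton polyhedron of $z^3 + xz^2 - \ell x^{\ell-1}y^{2k}z - (x^\ell + y^2)y^{2k}$), and the chains $(1,j,j(k+1)-1)$, $(1,j,j(k+1))$, $(2,j,j(k+1)-1)$ for varying $j$ must be organized into a global regular triangulation that extends compatibly across these faces. The chain $(s,1,s),\dots,(s,1,k+1-s)$ together with the vectors $(2q-1,1,2q-1),(2q-1,1,2q),(2q,1,2q)$ forces an $A$-type zig-zag analogous to the decomposition of the cone $C_3$ for $A_{k-1,k-1,m-1}$, with the condition $k=3q-1$ fixing the parity so that the zig-zag closes regularly at $(2q,1,2q)$. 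Once the local regularity check is carried out on each subcone type (which reduces to a finite list of $3\times 3$ determinants, all equal to $\pm 1$ by direct computation), the theorem follows by the same Varchenko-plus-fan argument used in the proof of Theorem 4.2.
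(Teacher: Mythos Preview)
Your proposal is correct and matches the paper's approach exactly: the paper does not give a detailed proof for $C_{k-1,l+1}$ but explicitly states that one proceeds as for $E_{6,0}$ and $A_{k-1,l-1,m-1}$, presenting only the jet graph (Figure \ref{jetC}) and the resulting regular subdivision of the dual Newton fan. Your two-step outline (extract weight vectors from the jet graph via the inductive $F_m \bmod I_{m-1}$ computation, then verify regularity of the subdivision cone-by-cone via $3\times 3$ determinants and invoke Varchenko) is precisely the template the paper applies.
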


The embedded resolutions are represented on the figure below.
\begin{figure}[H]
\setlength{\unitlength}{0.5cm}
\includegraphics[height=5cm,width=17cm]{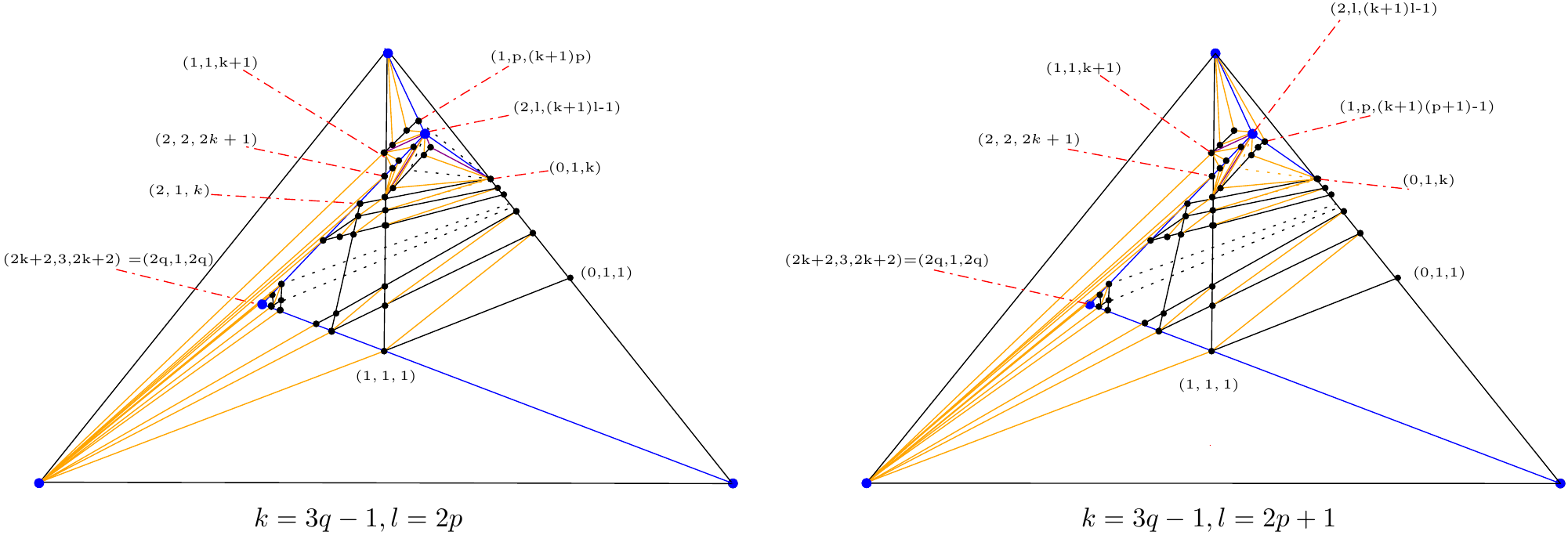}
\caption{An embedded resolution of $C_{k-1,l+1}$ when $k=3q-1$ and $l=2p$ or $l=2p+1$ }
\end{figure}

\section{Jet Schemes and Toric Embedded Resolution of $D_{k-1}$}

\noindent  The singularity of $X\subset \mathbb{C}^3$ defined by the equation:
$$z^3+(x+y^{2k})z^2+(2xy^{k}-y^2)y^kz+x^2y^{2k}=0$$
is called $D_{k-1}$-type singularity with $k\geq 1$. The jet graph is given in Figure $20$ where the irreducible components of the jet schemes of $D_{k-1}$ projecting on the singular locus $\{(y=z=0\}$ and the axe $\{x=z=0\}$ included in $X$:

\begin{figure}[H]
\setlength{\unitlength}{0.28cm}
\includegraphics[height=16cm,width=15cm]{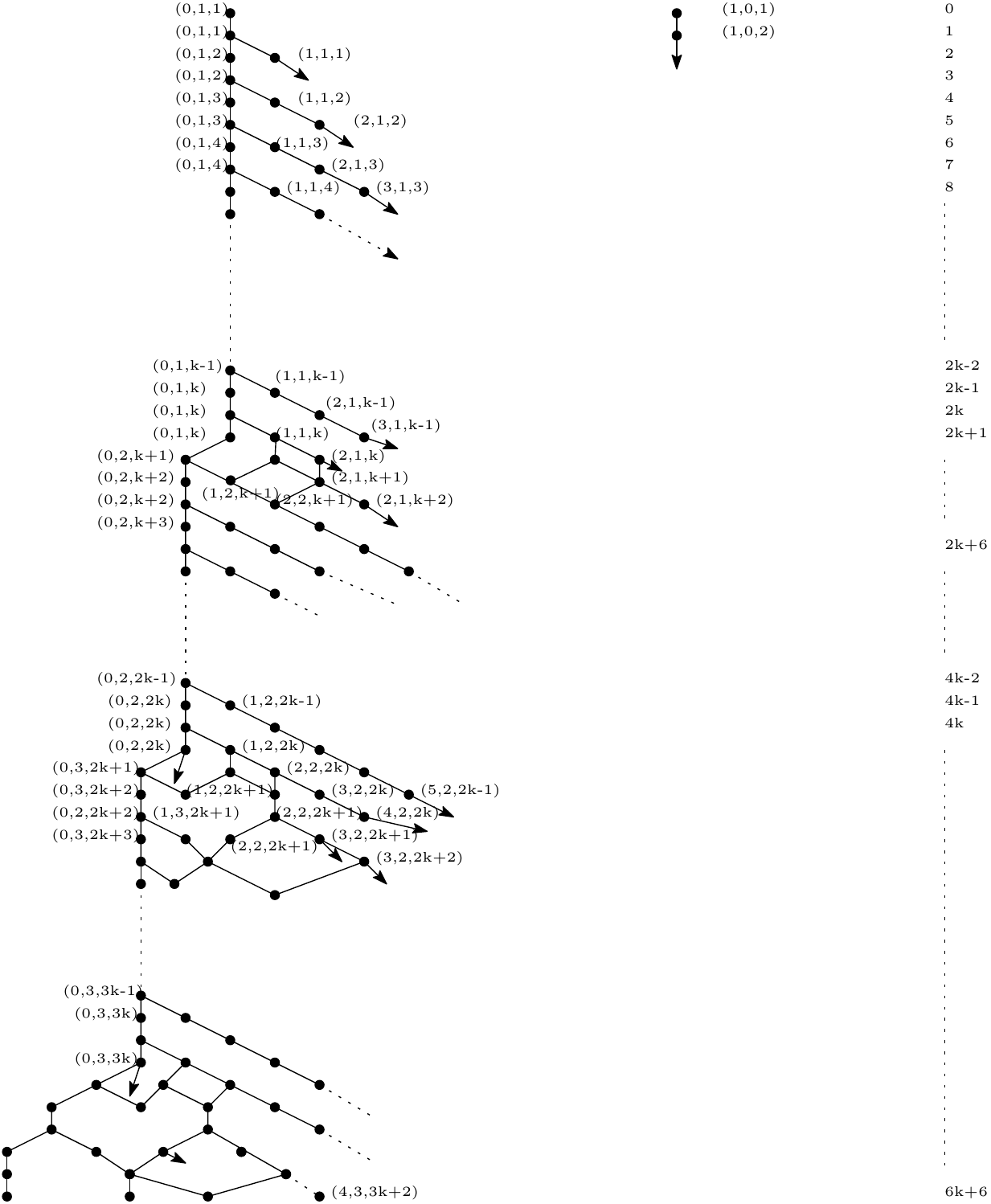}
\caption{Jet schemes of $D_{k-1}$ }
\end{figure}

\begin{thm}
Let $X$ be a surface singularity of type $D_{k-1}$. The monomial valuations associated with the following vectors
belong to $EV(X).$ Moreover there exists a toric birational map $\mu_\Sigma:Z_\Sigma\longrightarrow \mathbb{C}^3$ which is an embedded resolution of $X\subset \mathbb{C}^3$ such that the irreducible components of the exceptional divisor of $\mu_\Sigma$ correspond to the irreducible components of the $m$-th jet schemes of $X$ (centered at the singular locus and the intersection of $X$ with the coordinate hyperplane). This yields a construction (not canonical) of $\mu_\Sigma.$     
\begin{itemize}
\item { $(1,0,1), (1,0,2)$}
\item $(0,1,1), (0,1,2)...(0,1,k)$
\item $(1,1,1), ... ,(1,1,k),(2,2,2k+1)),(1,1,k+1)$
\item $(2,1,2),... ,(2,1,k+2)$
\item $(3,1,3),... ,(3,1,k-1)$
\item \dots
\item $(m,1,m), (m,1,m+1),(m,1,m+2)$
\item $(m+1,1,m+1)$
\item $(3,2,2k+1),(3,2,2k+2)$
\end{itemize}
\noindent When $k$ is odd, we should add two more vectors: $(m+1,1,m+2), (k+2,2,k+2)$
where  $m=E(\frac{k}{2})$).
\end{thm}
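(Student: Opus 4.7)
The plan mirrors the structure used throughout Sections~4--7: establish that the listed vectors arise as weight vectors of essential components of the jet schemes, and then verify that they give a regular simplicial refinement of the dual Newton fan of $D_{k-1}$, so that Varchenko's theorem yields a toric embedded resolution.

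For the first part, I would carry out the jet scheme computation level by level, exactly following the template of the detailed $E_{6,0}$ argument in Section~4. Starting from $X_1^{\mathrm{Sing}} = V(y_0,z_0)$ (weight $(0,1,1)$) and the axis component $V(x_0,z_0)$ (weight $(1,0,1)$), I would compute $F_m$ modulo the ideal of each component at level $m-1$ and track the irreducible decomposition, recording the weight vector on each essential monomial branch. Non-monomial intermediate components of Hirzebruch--Jung or $A_1$ type that appear along the way are, by the same argument as for $E_{6,0}$, irreducible and yield no new essential valuations. The vectors $(2,2,2k+1)$ and $(3,2,2k+1),(3,2,2k+2)$ should appear from the branches where the local equation acquires a double structure in the $y$-direction, in the same way as the corresponding vectors in the $B$- and $C$-type cases. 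The jet graph in Figure~20 records the outcome of this process.

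For the second part, I would read off the dual Newton fan of $D_{k-1}$ from the Newton polyhedron with vertices at $(0,0,3),(1,0,2),(2,2k,0),(0,k+2,1)$, and verify by standard $3\times 3$ collinearity/determinant computations that each listed vector lies on a ray or on a two-dimensional face of that fan. Then I would insert the vectors one by one, producing elementary triangles, and check regularity of each triangle by a determinant of the shape
\[
\left|\begin{array}{ccc} s & s & s+1 \\ 1 & 1 & 1 \\ \alpha & \alpha+1 & \alpha+1 \end{array}\right| = 1,
\]
as was done for $C_{k-1,l+1}$. Since the $D_{k-1}$ singularity is Newton non-degenerate (by the theorem of Section~2), Varchenko's theorem \cite{Va} then concludes that $\mu_\Sigma:Z_\Sigma \to \mathbb{C}^3$ is an embedded resolution, and the bijection between rays of $\Sigma$ and irreducible components of the exceptional divisor follows from the standard toric dictionary together with the correspondence described in Section~3.

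The step I expect to be most delicate is the parity dichotomy in $k$. When $k$ is even, the diagonal sequence $(s,1,s)$ reaches $(k/2,1,k/2)$ cleanly and joins up with $(2,2,2k+1)$ much as in the $A_{k-1,l-1,m-1}$ case with $k+m$ even. When $k$ is odd, however, the diagonal terminates at $\bigl(\tfrac{k-1}{2},1,\tfrac{k-1}{2}\bigr)$ and the cone bounded by $(m+1,1,m+1)$ and $(2,2,2k+1)$ cannot be broken into regular pieces without the extra rays $(m+1,1,m+2)$ and $(k+2,2,k+2)$ listed in the statement. Verifying regularity of the triangles bordering $(k+2,2,k+2)$ will require a determinant argument analogous to the $(m+k,2,m+k)$ calculation in the proof of Theorem~5.2 for $A_{k-1,l-1,m-1}$ with $k\geq l\geq m$; this is the one place where I expect the combinatorics to need care beyond the routine template.
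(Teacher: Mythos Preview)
Your proposal is correct and follows essentially the same approach as the paper. The paper gives no written proof for $D_{k-1}$ beyond presenting the jet graph (Figure~20) and the resulting regular subdivision (Figure~21), explicitly relying on the template worked out in detail for $E_{6,0}$ and $A_{k-1,l-1,m-1}$; your plan to redo the jet computation level by level, then verify regularity of the subdivision via $3\times 3$ determinants and invoke Varchenko, is exactly that template, and your identification of the odd-$k$ parity issue with the extra rays $(m+1,1,m+2)$, $(k+2,2,k+2)$ as the analogue of the $(m+k,2,m+k)$ step in Theorem~5.2 is on the mark.
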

\noindent These vectors placed in the dual Newton fan give the regular subdivision:
\begin{figure}[H]
\setlength{\unitlength}{0.5cm}
\includegraphics[scale=0.8,height=6cm,width=14cm]{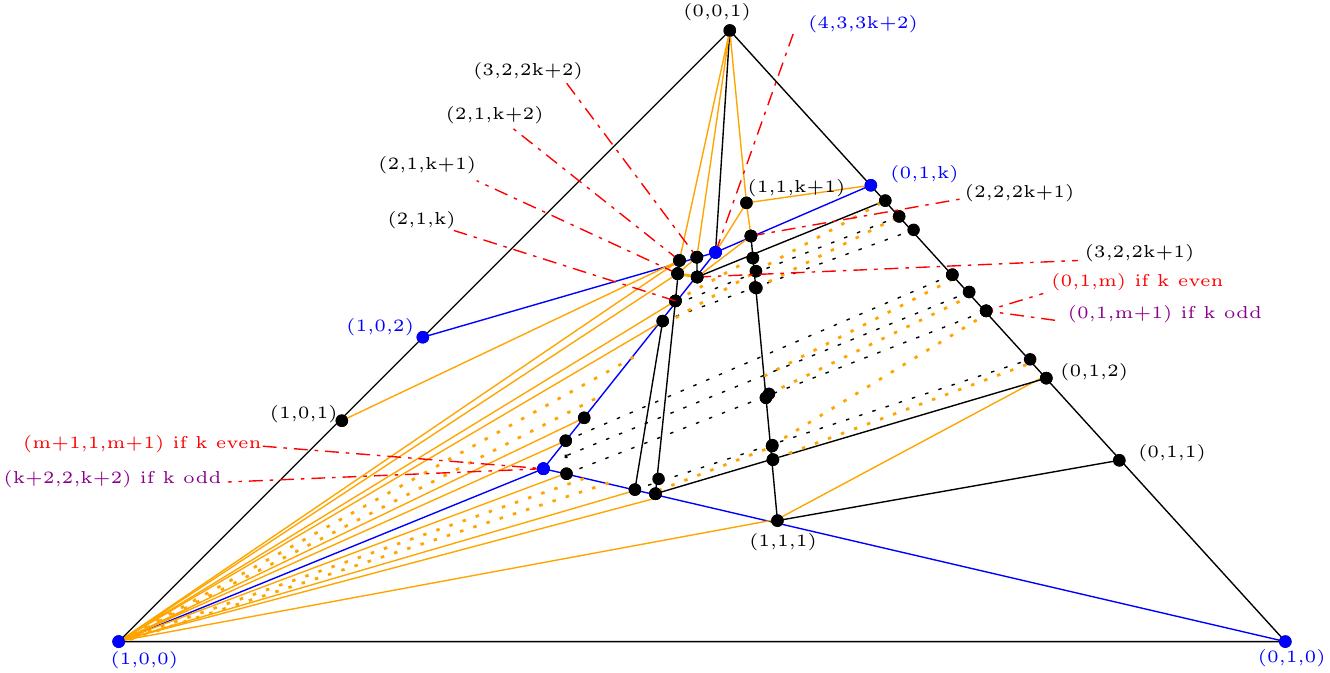}
\caption{Embedded resolutions of $D_{k-1}$ for $k=2m$ and $k=2m+1$ }
\end{figure}

\section{Jet Schemes and Toric Embedded Resolution of  $E_{7,0}$}

\noindent  The singularity of $X\subset \mathbb{C}^3$ defined by  the equation:
$$z^{3}+x^{2}yz+y^{4}=0$$ 
is called $E_{7,0}$-type singularity. The singular locus is $\{y=z=0\}$.

\begin{thm}
Let $X$ be a surface singularity of type $E_{7,0}$. The monomial valuations associated with the vectors: 
$\{(0,1,1),(0,2,1),(0,1,2), {(0,1,3),}$ $(1,1,1),$ $(1,1,2),$ $(1,2,2),$ $(1,2,3),$ $(1,2,4),$ $(2,2,3),$ $(2,3,4),$ $(2,3,5),$ $(3,3,4),$ $(3,4,5),$ $(3,4,6),$ $(4,5,7),$ $(5,6,8)\}$ belong to $EV(X).$ There exists a toric birational map $\mu_\Sigma:Z_\Sigma\longrightarrow \mathbb{C}^3$ which is an embedded resolution of $X\subset \mathbb{C}^3$ such that the irreducible components of the exceptional divisor of $\mu_\Sigma$ correspond to the irreducible components of the $m$-th jet schemes of $X$ (centered at the singular locus and the intersection of $X$ with the coordinate hyperplane). Moreover this yields a construction (not canonical) of $\mu_\Sigma.$    
\end{thm}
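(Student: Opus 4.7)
The plan is to mirror the strategy used for the $E_{6,0}$ case in Section 4, adapting the jet scheme computations to the defining equation $f(x,y,z)=z^3+x^2yz+y^4$. First, I will compute the jet schemes $X_m^{Sing}$ projecting onto the singular locus $\{y_0=z_0=0\}$ and, separately, $X_m^Y$ where $Y=\{x=z=0\}\subset X$, for $m$ running up to the quasi-weighted degree of $f$ (here $12$, since $f$ is quasi-homogeneous of weight $(3,3,4)$ with total weight $12$). At each level $m$, I would substitute $x=\sum x_it^i$, $y=\sum y_it^i$, $z=\sum z_it^i$ into $(\ast)$, extract the coefficients $F_0,\ldots,F_m$, and decompose $V(F_0,\ldots,F_m)\cap \pi_m^{-1}(Y)$ into irreducible pieces. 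The initial steps are forced: $V(y_0,z_0)$ gives the component with weight $(0,1,1)$; the next $F_2$ modulo $(y_0,z_0)$ shows a splitting according to whether $x_0=0$ or $y_1=0$, yielding the weights $(1,1,1)$ and $(0,2,1)$; and one continues level by level.

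Second, I would record the results in the jet graph, exactly as in Figure~2, weighting each vertex by its $v(\mathcal{C}_m)\in \mathbb{N}^3$. As in the $E_{6,0}$ discussion, non-monomial subcomponents coming from auxiliary equations (which will appear here from terms like $z_i^3+x_j^2y_k$-type relations after coordinate simplifications, reflecting the Hirzebruch–Jung quasi-ordinary nature) should be traced via dashed arrows and shown not to be essential because their weight vector equals that of the monomial component above them. The essential-component list should then reduce to exactly the eighteen vectors announced in the theorem; this is the proof of the first assertion.

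Third, to get the embedded resolution, I would place these vectors in the positive octant and verify that they define a fan $\Sigma$ refining the dual Newton fan of $f$. Concretely, I need to (a) check that each listed vector lies in a $2$-face of the Newton boundary (so that $\Sigma$ is genuinely a subdivision of the Newton fan), and (b) triangulate and verify regularity of every resulting $3$-cone by computing the $3\times 3$ determinant of its generators and checking that it equals $\pm 1$. Since the vectors come in the pattern $(k,k+1,k+2)$ along the principal ray together with the short rays near the coordinate axes, the determinant computations are routine $1$-line verifications analogous to the cones $C_1,\ldots,C_5$ treated in Section 5. Once regularity of $\Sigma$ is established, Varchenko's theorem together with Theorem 2.1 (non-degeneracy of RTP singularities) ensures that the toric map $\mu_\Sigma\colon Z_\Sigma\to\mathbb{C}^3$ is an embedded resolution of $X$, and the bijection between the rays of $\Sigma$ and the components of the exceptional divisor gives the final correspondence with the essential jet-scheme components.

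The main obstacle I expect is the bookkeeping in the jet scheme computation: $E_{7,0}$ has degree $12$, so there are many levels where non-monomial hypersurface equations of quasi-ordinary type appear and must be analyzed to confirm that their specialization/regular strata do not produce new essential weight vectors beyond those listed. The argument for each such stratum is the same pattern used for $\mathcal{C}_{6,1}$ in the $E_{6,0}$ case (stratify into regular locus plus lower-dimensional singular loci, observe that the regular part only recycles the same weight vector, and feed the singular strata into the next level), but one must carry it out consistently at every level. Checking regularity of the $3$-cones of $\Sigma$ and verifying the Newton-fan refinement property is tedious but mechanical and presents no real obstacle.
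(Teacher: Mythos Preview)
Your overall strategy—mirror the $E_{6,0}$ computation, extract the weight vectors from the jet graph, then verify that they regularly subdivide the dual Newton fan and invoke Varchenko—is exactly what the paper does; its ``proof'' is just the sentence ``Following almost the same process as in the case of $E_{6,0}$, we continue until $m=22$'' together with the jet graph and the resolution picture.

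However, there is a concrete numerical error in your plan that would make the execution fail. The polynomial $f=z^{3}+x^{2}yz+y^{4}$ is \emph{not} quasi-homogeneous of weight $(3,3,4)$: with those weights $x^{2}yz$ has degree $13$, not $12$. The correct weights are $(5,6,8)$ with quasi-degree $24$ (note that $(5,6,8)$ is precisely the last vector in the list). Accordingly the paper carries the jet computation up to $m=22$, not $m=12$. If you stop at $m=12$ you will never see the essential components with weight vectors $(3,4,6)$, $(4,5,7)$, $(5,6,8)$, etc., and your list will be badly incomplete. A related slip: at level $m=2$ one has $F_{2}\equiv x_{0}^{2}y_{1}z_{1}$ modulo $(y_{0},z_{0})$, which already splits into \emph{three} monomial components with weights $(1,1,1)$, $(0,2,1)$, $(0,1,2)$, not two.

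A second correction: the axis $Y=\{x=z=0\}$ is \emph{not} contained in $X$ here, since $f(0,y,0)=y^{4}\neq 0$. So the auxiliary computation of $X_{m}^{Y}$ that was needed for $E_{6,0}$ (where the $y$-axis does lie on the surface) is irrelevant for $E_{7,0}$; all the listed vectors come from jets centered on the singular locus $\{y=z=0\}$ alone. (Incidentally, the list has seventeen vectors, not eighteen.)
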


\noindent  Following almost the same process as in the case of $E_{6,0}$, we continue until $m=22$ to obtain the 
following jet graph: 

\begin{figure}[H]
	\setlength{\unitlength}{0.5cm}
\includegraphics[scale=0.8,height=12cm,width=12cm]{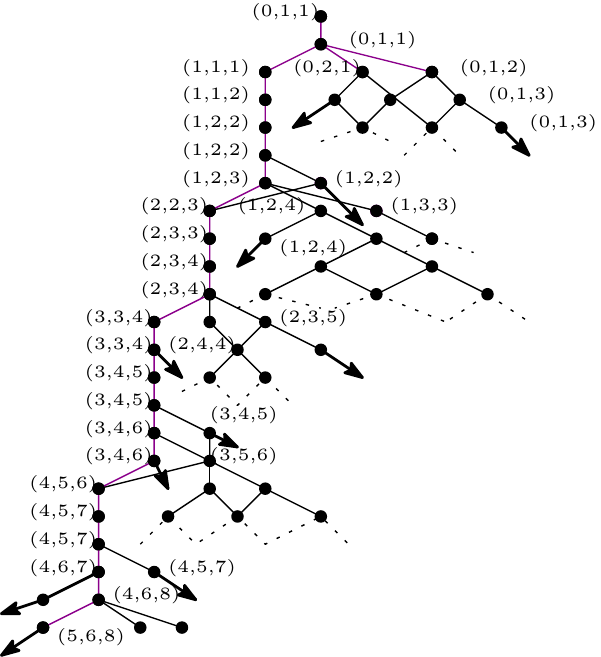}
	\caption{ Jet schemes of $E_{7,0}$}
\end{figure}

\noindent The vectors corresponding to the irreducible jet schemes give the following subdivision, which is an embedded resolution of $X$:

 \begin{figure}[H]
\setlength{\unitlength}{0.6cm}
\includegraphics[scale=0.8,height=5cm,width=14cm]{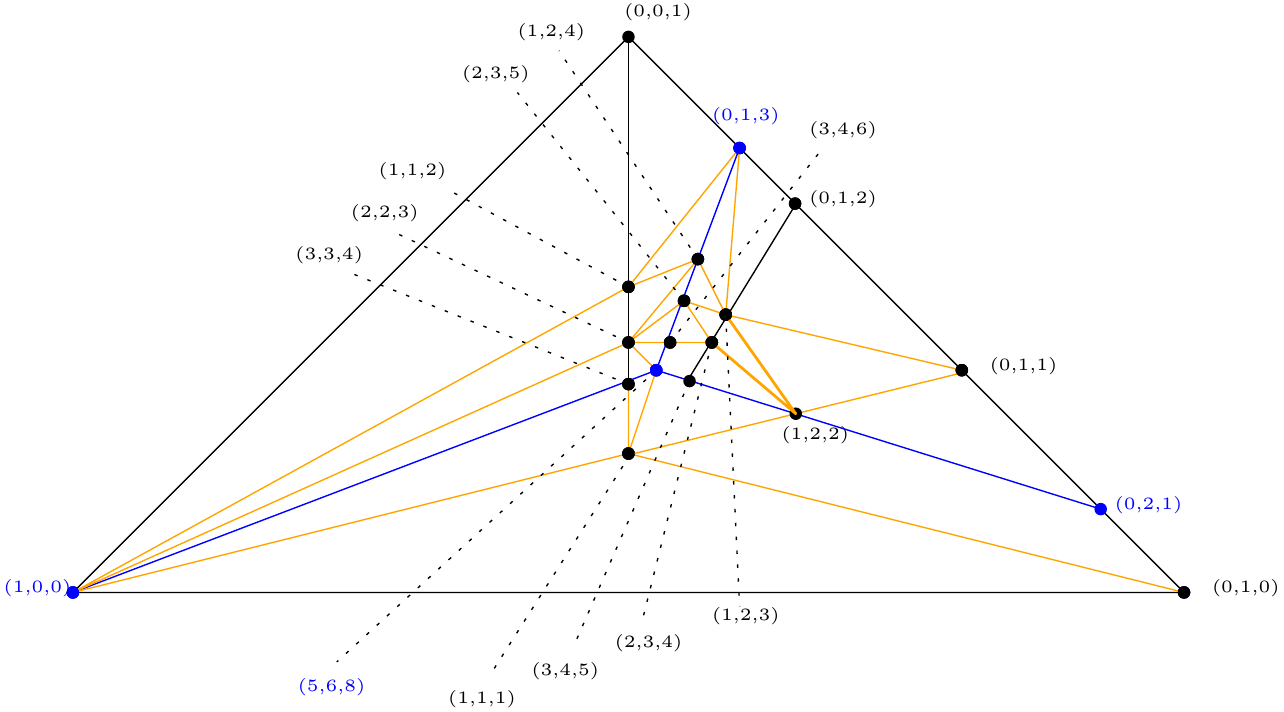}
\caption{An embedded resolution of $E_{7,0}$}
\end{figure}

\section{Jet Schemes and Toric Embedded Resolution of  $E_{0,7}$}

\noindent  The singularity of $X\subset \mathbb{C}^3$ defined by  the equation:
  $$z^{3}+y^{5}+x^{2}y^{2}=0$$ 
is called $E_{0,7}$-type singularity. The singular locus is $\{y=z=0\}$. The jet graph representing the irreducible jet schemes is obtained as:

\begin{figure}[H]
	\setlength{\unitlength}{0.05cm}
\includegraphics[scale=0.8,height=18cm,width=17cm]{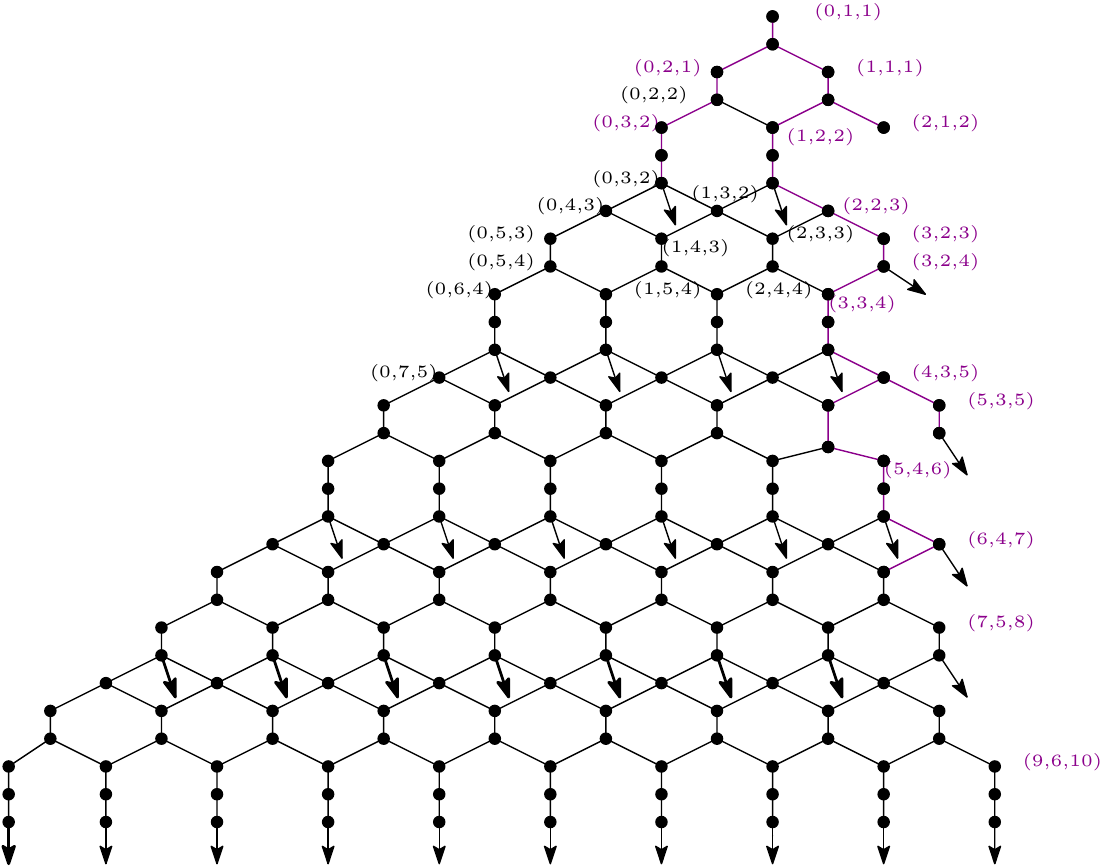}
	\caption{ Jet schemes of $E_{0,7}$}
\end{figure}

\begin{thm}
Let $X$ be a surface of type $E_{0,7}$. The monomial valuations associated with the vectors  
$\{(0,1,1),(0,2,1),(1,1,1)$, $(0,3,2)$, $(1,1,2)$, $(1,2,2)$, $(2,1,2),$ $(2,2,3),$ $(3,2,3),$ $(3,2,4),$ $(3,3,4),$ $(4,3,5),$ $(5,3,5))$
$(5,4,6),(6,4,7),(7,5,8),(9,6,10)\}$ belong to $EV(X).$ There exists a toric birational map $\mu_\Sigma:Z_\Sigma\longrightarrow \mathbb{C}^3$ which is an embedded resolution of $X\subset \mathbb{C}^3$ such that the irreducible components of the exceptional divisor of $\mu_\Sigma$ correspond to the irreducible components of the $m$-th jet schemes of $X$ (centered at the singular locus and the intersection of $X$ with the coordinate hyperplane). Moreover this yields a construction (not canonical) of $\mu_\Sigma.$    
\end{thm}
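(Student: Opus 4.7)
The plan is to follow verbatim the scheme carried out for $E_{6,0}$ in Section 4. Writing $f(x(t),y(t),z(t)) = \sum_{i\geq 0} F_i\, t^i$ with $f = z^3 + y^5 + x^2 y^2$, one computes $X_m^{Sing} = \pi_m^{-1}(\{y_0 = z_0 = 0\})$ recursively: at each level $m$, for every irreducible component $\mathcal{C}_{m-1}$ of $X_{m-1}^{Sing}$, reduce $F_m$ modulo the ideal defining $\mathcal{C}_{m-1}$ and decompose the resulting variety to read off the components of $X_m^{Sing}$ lying above $\mathcal{C}_{m-1}$. One starts at $m = 1$ from the unique component $V(y_0, z_0)$ of weight $(0,1,1)$; each subsequent step is a routine polynomial manipulation followed by a factorization.

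As in the $E_{6,0}$ case, the iteration will produce two kinds of components: genuinely new monomial essential components (whose defining ideals are generated by hyperplane coordinates in the jet ambient space) and intermediate non-monomial components carrying an auxiliary equation of the form $z_k^3 + x_i^2 y_j^2 = 0$ or $u^2 + vw = 0$ (quasi-ordinary, resp.\ $A_1$). For the second kind one stratifies into the smooth locus (whose Zariski closure under $\pi_{n,m}^{-1}$ for $n > m$ remains irreducible and inherits the same weight vector, hence yields no new essential component, and is recorded in the jet graph by a dashed arrow) and the singular stratum (which spawns the genuinely new essential components at level $m+1$, with strictly larger weights). I would also treat $X_m^Y$ for $Y$ a component of $X \cap \{x = 0\}$ and $X \cap \{z = 0\}$ to capture essential vectors with a vanishing coordinate that do not arise from the singular-locus stratification.

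Once the jet graph stabilizes and produces the $17$ weight vectors of the statement, I assemble them into a fan $\Sigma$ with support $\mathbb{R}^3_{\geq 0}$; regularity is checked by the standard determinant test on each maximal cone (each $3\times 3$ determinant must equal $\pm 1$). Since $f = z^3 + y^5 + x^2 y^2$ is Newton non-degenerate (Theorem 2.1) and the listed vectors refine the dual Newton fan of $X \subset \mathbb{C}^3$, Varchenko's theorem applies and guarantees that the induced toric morphism $\mu_\Sigma : Z_\Sigma \longrightarrow \mathbb{C}^3$ is an embedded resolution; the bijection between the rays of $\Sigma$ and the irreducible components of the exceptional divisor matches the essential components of $\{X_m^Z\}_{m\geq 0}$ as required.

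The main obstacle is the sheer length of the recursion. Because $f$ is weighted homogeneous of quasi-degree $30$ with weights $(9,6,10)$ (matching precisely the largest vector $(9,6,10)$ in the list), the essential components only stabilize around $m \approx 30$, making the case analysis noticeably heavier than the $E_{6,0}$ computation (which stabilized at $m = 18$). The genuinely delicate part is ensuring that every non-monomial intermediate component is correctly stratified and that no unexpected essential weight slips through; once that bookkeeping is in place, the regularity verification of $\Sigma$ and the final appeal to Varchenko's theorem are mechanical.
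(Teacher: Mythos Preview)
Your proposal is correct and follows exactly the approach the paper takes: for $E_{0,7}$ the paper gives no argument beyond displaying the jet-graph figure and the resulting regular subdivision of the dual Newton fan, relying on the fully worked $E_{6,0}$ computation as a template, and then invoking Varchenko's theorem. Your write-up spells out precisely that template (recursive computation of $X_m^{Sing}$, stratification of the non-monomial Hirzebruch--Jung and $A_1$-type intermediate components, determinant check for regularity, appeal to Newton non-degeneracy), including the correct observation that the quasi-degree here is $30$ rather than $18$, so the recursion runs longer.
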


\begin{figure}[H]
\setlength{\unitlength}{0.05cm}
\includegraphics[scale=0.8,height=6cm,width=14cm]{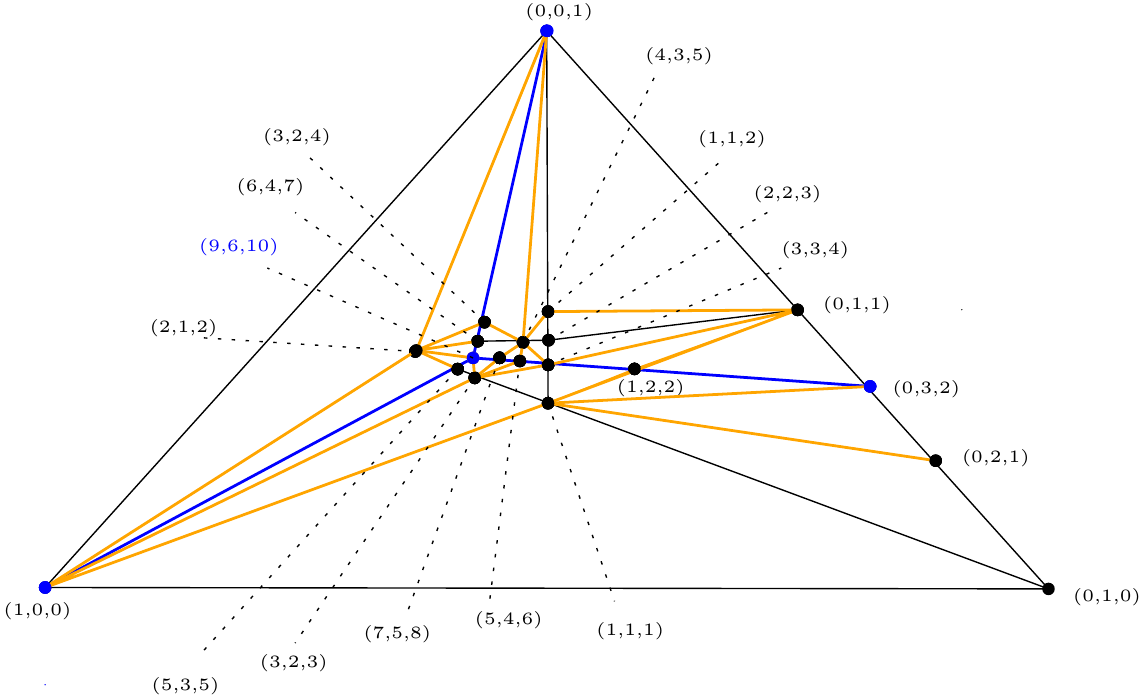}
\caption{An embedded resolution of $E_{0,7}$ }
\end{figure}

\section{Jet Schemes and Toric Embedded Resolution of $F_{k-1}$}

\noindent  The singularity of $X\subset \mathbb{C}^3$ defined by  the equation:
$$z^3+(x+y^{2k})z^2+2xy^{2k}z+(x^2+y^{3})y^{2k}=0$$
is called $F_{k-1}$-type singularity. The singular locus is $\{y=z=0\}$. 

\begin{thm}
Let $X$ be a surface singularity of type $F_{k-1}$. The monomial valuations associated with the vectors:
\begin{itemize}
\item $ (0,1,1),\dots, (0,1,k)$
\item $(1,1,1),\dots,(1,1,k+1)$
\item $(2,1,2),\dots,(2,1,k+1)$
\item $(3,1,3),\dots,(3,1,k)$
\item $\dots$
\item $(a,1,b)$
\item $(2,2,2k+1),(3,2,2k+2),(4,2,2k+1),(6,2,2k) \dots (c,2,d)$
\item $(4,3,3k+2),(5,3,3k+2),(7,3,3k+1),(9,3,3k) \dots {(2k+3,3,2k+3)}$
\item {$(3k+2,3,3k+2)$} if $k=3m+1$
\end{itemize}
\noindent with 
\begin{itemize}
\item $(a,1,b)=(\frac{2k+3}{3},1,\frac{2k+3}{3})$ and $(c,2,d)=(\frac{4k+6}{3},2,\frac{4k+6}{3})$ if $k=3m$ for $m\in \mathbb N$; 
\item $(a,1,b)=(\frac{2k+1}{3},1,\frac{2k+4}{3})$ and $(c,2,d)=(\frac{4k+2}{3},2,\frac{4k+8}{3})$ if $k=3m+1$ for $m\in \mathbb N$
\item  $(a,1,b)=(\frac{2k-1}{3},1,\frac{2k+5}{3})$ and $(c,2,d)=(\frac{4k+4}{3},2,\frac{4k+7}{3})$ if $k=3m+2$ for $m\in \mathbb N*$
\end{itemize} 
belong to $EV(X)$. Moreover there exists a  birational map $\mu_\Sigma:Z_\Sigma\longrightarrow \mathbb{C}^3$ which is an embedded resolution of $X\subset \mathbb{C}^3$ such that the irreducible components of the exceptional divisor of $\mu_\Sigma$ correspond to the irreducible components of the $m$-th jet schemes of $X$ (centered at the singular locus and the intersection of $X$ with the coordinate hyperplane). Moreover this yields a construction (not canonical) of $\mu_\Sigma.$    
\end{thm}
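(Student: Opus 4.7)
The plan is to mimic the strategy already carried out for the $E_{6,0}$ and $A_{k-1,l-1,m-1}$ cases, which itself has two independent halves: first establish that each listed vector arises as the weight vector $v(\mathcal{C}_m)$ of an essential component of some jet scheme $X_m^Y$ (with $Y$ the singular locus $\{y=z=0\}$, or an axis drawn on $X$), and second, check that the union of these vectors, placed inside the first octant, gives a regular simplicial refinement of the dual Newton fan of $f=z^3+(x+y^{2k})z^2+2xy^{2k}z+(x^2+y^3)y^{2k}$. Once the refinement is regular and subdivides the dual Newton fan, Varchenko's theorem (applicable since $f$ is Newton non-degenerate by the results of \cite{mag} recalled in Section~2) produces the required toric embedded resolution $\mu_\Sigma:Z_\Sigma\longrightarrow\mathbb{C}^3$, and the bijection between rays of $\Sigma$ and components of the exceptional divisor is built into the toric construction.

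For the jet scheme computation I would proceed level by level exactly as in Section~4. After setting $y_0=z_0=0$ one reads from $(\ast)$ that $F_0=F_1=0$ automatically, $F_2=x_0^2y_1^2$, and so on; at each step, from any component $\mathcal{C}_{m-1}$ one expands $F_m$ modulo the ideal defining $\mathcal{C}_{m-1}$ and factors it into equations of hyperplane-coordinate type or into non-monomial factors (producing a Hirzebruch--Jung type or $A_1$-type fiber, as happened at $m=6$ for $E_{6,0}$). Components of the first kind are essential and contribute a new weight vector to the graph; components of the second kind are non-monomial and, by the quoted irreducibility for quasi-ordinary or rational singularities, propagate as a single arrow without changing the weight vector. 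Tracking these arrows until the quasi-weight of $f$ is reached (here $6k+6$ via the monomial $y^{2k+3}$) recovers exactly the list of vectors in the statement, where the three residue classes $k\in\{3m,3m+1,3m+2\}$ produce three different terminal vectors $(a,1,b)$ and $(c,2,d)$ sitting on the line joining $(1,1,k+1)$ to $(2k+3,3,2k+3)$.

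The second half reduces to routine $3\times 3$ determinant checks, carried out cone by cone as for the subcones $C_1,\ldots,C_5$ in the $A_{k-1,k-1,m-1}$ proof. The triangulation is forced: one adds first the ray $(1,1,k+1)$ on the compact edge joining $(0,1,k)$ and $(2k+3,3,2k+3)$, then the ``fans'' of rays $(i,1,j)$ along the segments issued from the origin of the dual Newton fan towards $(2k+3,3,2k+3)$, the vectors of the form $(c,2,d)$ supplying the intermediate ray on the edge towards the weight $(4k+6,6,4k+6)$ direction, and finally the rays of the form $(\cdot,3,\cdot)$ to finish the decomposition in the neighborhood of $(2k+3,3,2k+3)$. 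For each resulting 3-cone one checks that the determinant of the matrix of its generators equals $\pm 1$; the three residue cases differ only in the arithmetic of the central determinant involving $(a,1,b)$ and $(c,2,d)$, which is why the list of vectors must branch according to $k\bmod 3$.

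The main obstacle is the bookkeeping inside the jet graph rather than any conceptual difficulty. The term $y^3$ in the equation, which has no analogue in the simpler types, forces a non-monomial branching at the level where $F_m$ acquires the contribution $y_0^3\cdot(\ldots)$; one must show, as for the Hirzebruch--Jung component in the $E_{6,0}$ case, that this branch propagates with a constant weight vector and therefore contributes no new essential component. Matching the terminal vector of each such dashed arrow with one of the rays already in the list, and verifying that no further essential weight vector is created beyond the quasi-degree, is the place where the three residue cases have to be treated separately and where the casework in the statement originates. Once these verifications are in place, the regularity computation closes the proof and the exceptional divisors of $\mu_\Sigma$ correspond, ray by ray, to the essential components found in the jet graph.
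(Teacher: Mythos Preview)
Your proposal is correct and follows essentially the same approach as the paper: for $F_{k-1}$ the paper does not give a separate proof but simply applies the method worked out in detail for $E_{6,0}$ and $A_{k-1,l-1,m-1}$, presenting only the resulting jet graph and a sample regular subdivision of the dual Newton fan as figures. One small slip: the non-monomial branching you mention cannot come from a $y_0^3$ term (since $y_0=0$ over the singular locus) but rather from the analogous contribution in higher $y_i$'s; otherwise your outline matches the paper's strategy.
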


\noindent The jet graph representing the irreducible components of the jet schemes projecting on the singular locus is given by:

\begin{figure}[H]
\setlength{\unitlength}{0.6cm}
\includegraphics[scale=0.6,height=17cm,width=14cm]{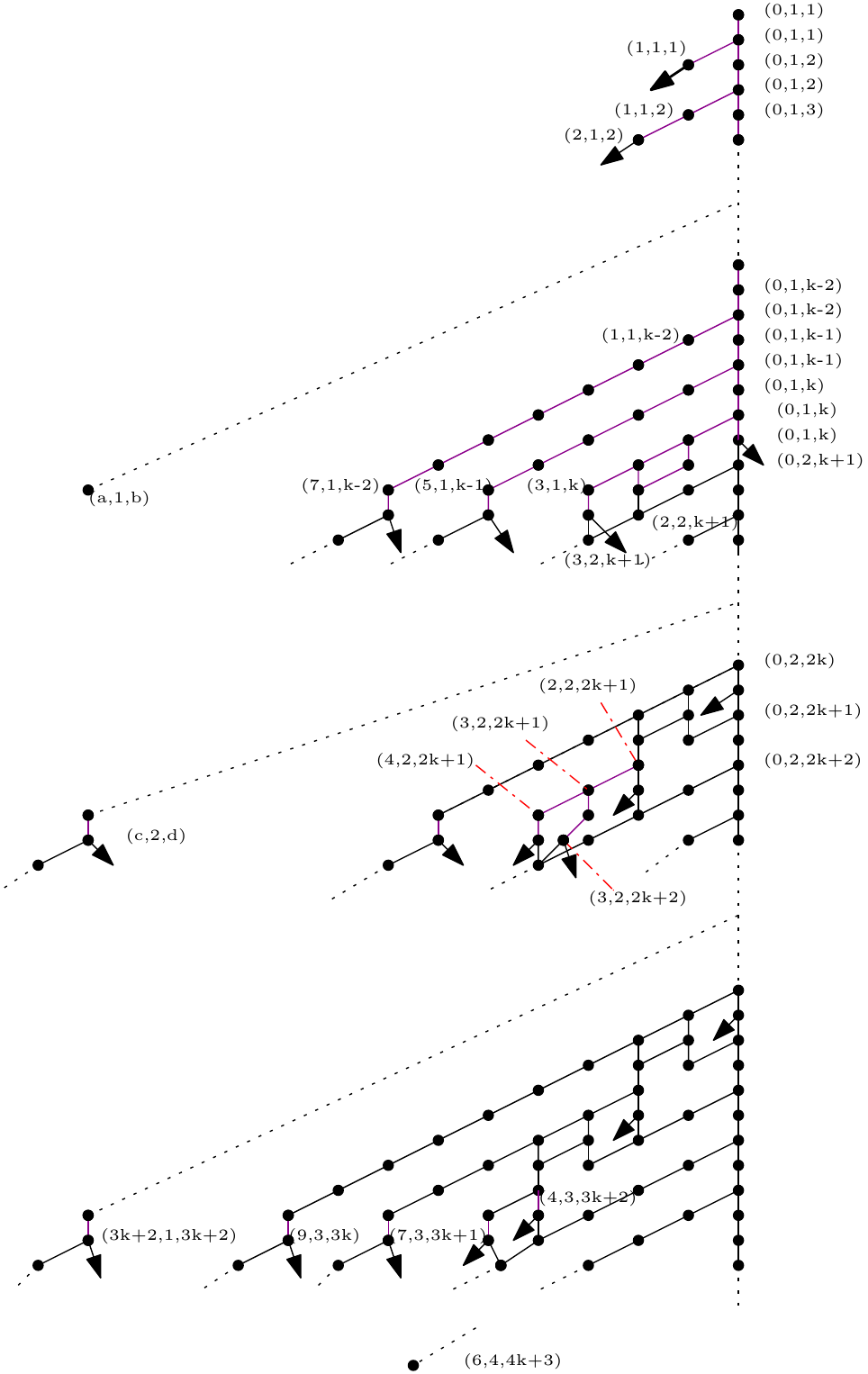}
\caption{ Jet schemes of $F_{k-1}$ }
\end{figure}

\begin{figure}[H]
\setlength{\unitlength}{0.05cm}
\includegraphics[scale=0.6,height=7cm,width=14cm]{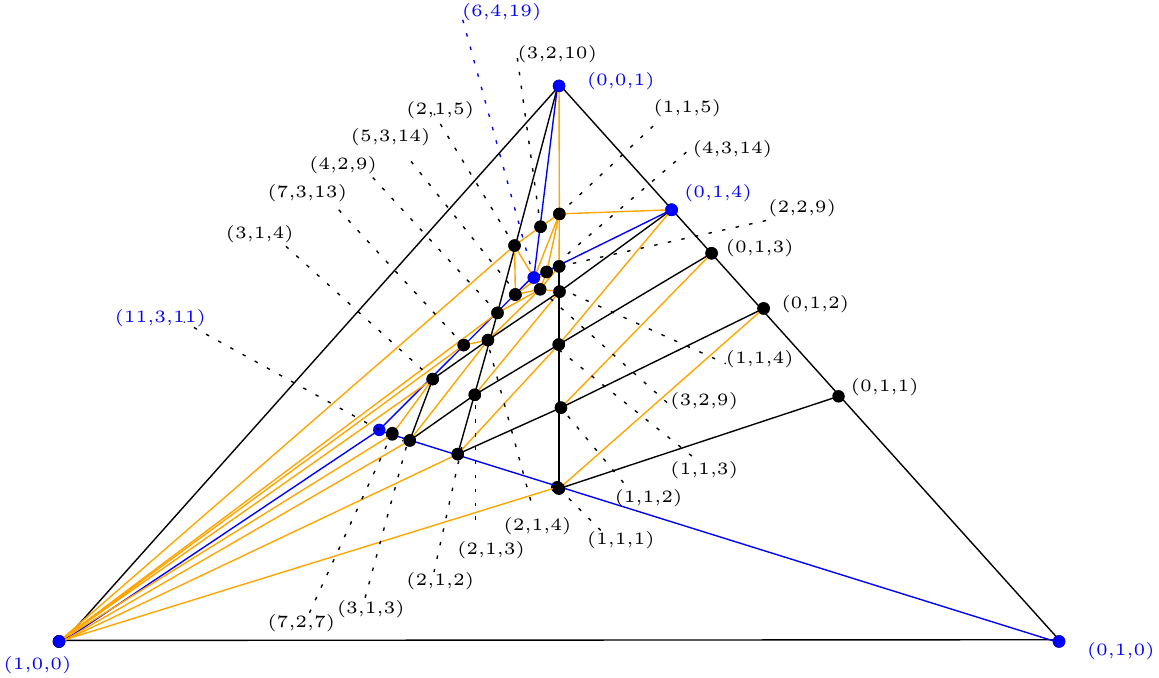}
\caption{{An embedded resolution of $F_3$} }
\end{figure}

\section{Jet Schemes and Toric Embedded Resolution of $H_{n}$}

\noindent The singularity of $X\subset \mathbb{C}^3$ defined by  the equation: 
 \begin{itemize}
\item $z^{3}+x^{2}y(x+y^{k-1})=0$ where $n=3k-1$
\item $z^{3}+xy^{k}z+x^{3}y=0$ where $n=3k$
\item $z^{3}+xy^{k+1}z+x^3y^2=0$ where $n=3k+1$
\end{itemize}
is called $H_n$-type singularity. 
\begin{thm} Let $X$ be a surface of type $H_n$. The monomial valuations associated with the vectors:
\begin{enumerate}
\item $n=3k-1$
\begin{itemize}
\item { $(2,0,1),(3,0,2)$}
\item $(0,1,1), (1,1,2),\dots (k-1,1,k)$
\item $(0,2,1),(1,2,2), \dots (2k-2,2,2k-1)$
\item $(0,3,1),(1,3,2) , \dots (3k-3,3,3k-2)$
\item$(1,0,1),(1,1,1),(2,1,2),\dots (k,1,k)$
\end{itemize}
\item $n=3k$
\begin{itemize}
\item { $(2,0,1))$}
\item $(0,1,1), (1,1,2),\dots (k,1,k+1)$
\item $(0,2,1),(1,2,2), \dots (2k-1,2,2k)$
\item $(0,3,1),(1,3,2) , \dots (3k-2,3,3k-1)$
\item$(1,0,1),(1,1,1),(2,1,2),\dots (k,1,k)$
\end{itemize}
\item $n=3k-1$
\begin{itemize}
\item $(0,1,1), (1,1,2),\dots (k,1,k+1)$
\item $(0,2,1),(1,2,2), \dots (2k,2,2k+1)$
\item $(0,3,2),(1,3,3) , \dots (3k-1,3,3k+1)$
\item$(1,0,1),(1,0,2),(2,0,1)$
\item $(1,1,1),(2,1,2),\dots (k,1,k)$
\end{itemize}
\end{enumerate}
belong to $EV(X) .$ Moreover there exists a toric birational map $\mu_\Sigma:Z_\Sigma\longrightarrow \mathbb{C}^3$ which is an embedded resolution of $X\subset \mathbb{C}^3$ such that the irreducible components of the exceptional divisor of $\mu_\Sigma$ correspond to the irreducible components of the $m$-th jet schemes of $X$ (centered at the singular locus and the intersection of $X$ with the coordinate hyperplane). This yields a construction (not canonical) of $\mu_\Sigma.$    

\end{thm}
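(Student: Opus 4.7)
The plan is to follow exactly the same strategy that has been carried out in the previous sections, and in particular the detailed treatment of $E_{6,0}$ and $A_{k-1,l-1,m-1}$. The argument splits into the three sub-cases $n=3k-1$, $n=3k$, $n=3k+1$, each of which must be handled separately because the defining equation (and hence the singular locus together with the lines on $X$ along which we center our jet schemes) changes. In each case, the subvariety $Y\subset X$ over which we compute is the union of the singular locus with the coordinate axes contained in $X$ (for instance, for $n=3k-1$, both the $x$-axis and the $y$-axis lie on $X$, which accounts for the presence of the extra vectors $(2,0,1),(3,0,2)$ and $(1,0,1)$ in the first list; and similarly for $n=3k,3k+1$).

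The first step is, inside each of the three sub-cases, to describe the irreducible components of $X^Y_m$ recursively on $m$, following the template used for $E_{6,0}$: having identified the components $\mathcal{C}_{m-1,j}$ with their defining ideals, I would compute $F_m$ modulo each such ideal using the expansion $(\ast)$, decompose the resulting equation into its irreducible factors, and thereby produce the candidate components at level $m$, then discard the ones that are included in another. I would simultaneously record the weight vector $v(\mathcal{C}_{m,j})$ of each component and flag as essential those whose weight vector strictly differs from that of its predecessor. As in the $E_{6,0}$ analysis, certain components will be of Hirzebruch--Jung type (irreducible quasi-ordinary hypersurfaces) or of rational type $A_1$ (times an affine space), and I would use the known irreducibility of their jet schemes, together with stratification into regular and singular loci, to understand how they propagate up the tower; these give rise to non-essential dashed arrows in the jet graph and do not contribute new weight vectors. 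The procedure terminates once the quasi-degree of the defining polynomial is reached: beyond that level, new components only arise from the rational/quasi-ordinary propagation, producing no new weight vectors.

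The second step is to check that, in each of the three sub-cases, the listed vectors are \emph{all} the essential weight vectors produced by the first step, and that they lie on the one-skeleton of a common simplicial fan $\Sigma$ supported on the first octant of $\mathbf{R}^3$. Here I would proceed as in the $A_{k-1,l-1,m-1}$ proof: first locate the vectors on the natural edges of the dual Newton polyhedron (vectors of the form $(\alpha,1,\alpha+1)$ lie on a common edge, vectors $(2\alpha,2,2\alpha+1)$ on another, and so on), then subdivide cone by cone, successively inserting the vectors along predetermined segments so that every newly created three-dimensional cone is generated by a triple whose determinant equals $1$. As in the computations made for $C_1,\dots,C_5$ in the $A_{k-1,l-1,m-1}$ case, the verification reduces to a uniform family of $3\times 3$ determinants indexed by the parameter $k$ (and, in the $F_{k-1}$-type argument, by the residue of $k$ modulo $3$); in our case the split will be according to $k\bmod 3$ for $n=3k\pm 1$ to handle the vector at the apex of the triangular subdivision, analogously to what happens for $F_{k-1}$.

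Once $\Sigma$ is shown to be a regular subdivision of the dual Newton fan of $X$, the existence of the embedded resolution $\mu_\Sigma\colon Z_\Sigma\to \mathbf{C}^3$ follows directly from Varchenko's theorem, using non-degeneracy of the non-isolated forms of RTP-singularities (the theorem from Section 2). The identification of the exceptional components of $\mu_\Sigma$ with the essential components of the jet schemes is automatic from the bijection between one-dimensional cones of $\Sigma$ and exceptional divisors of the associated toric modification, combined with Proposition 2.3 of \cite{MP} which asserts that the divisorial valuation associated with an essential component is exactly the monomial valuation read off its weight vector. The main obstacle I expect is the first step: the jet graph for $H_n$ is large, and tracking the non-essential branches (coming from the $A_1$ and Hirzebruch--Jung strata) to make sure they do not collide with essential branches, or generate new weight vectors that would be missing from the lists, requires the same kind of careful case-splitting done explicitly for $E_{6,0}$, and this is where bookkeeping errors are most likely to occur; a secondary (but more routine) obstacle is the $k\bmod 3$ split needed to write down the correct apex vectors in the second step.
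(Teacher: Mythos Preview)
Your proposal is correct and follows essentially the same approach as the paper. In fact, the paper gives no written proof for this theorem at all: it simply displays the jet graph (Figure~28) and the embedded resolution fans (Figures~29--30), implicitly relying on the detailed templates worked out for $E_{6,0}$ and $A_{k-1,l-1,m-1}$, which is precisely the strategy you outline. One minor point: the paper does not indicate any $k\bmod 3$ case split for $H_n$ (that complication is specific to $F_{k-1}$), so that anticipated obstacle likely does not arise here.
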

\noindent The tree representing the irreducible components of the jets schemes projecting on the singular locus $\{x=z=0\}$ and the axe $\{y=z=0\}$ included in $X$ is the following:

\begin{figure}[H]
\setlength{\unitlength}{0.4cm}
\includegraphics[scale=1,height=18cm,width=18cm]{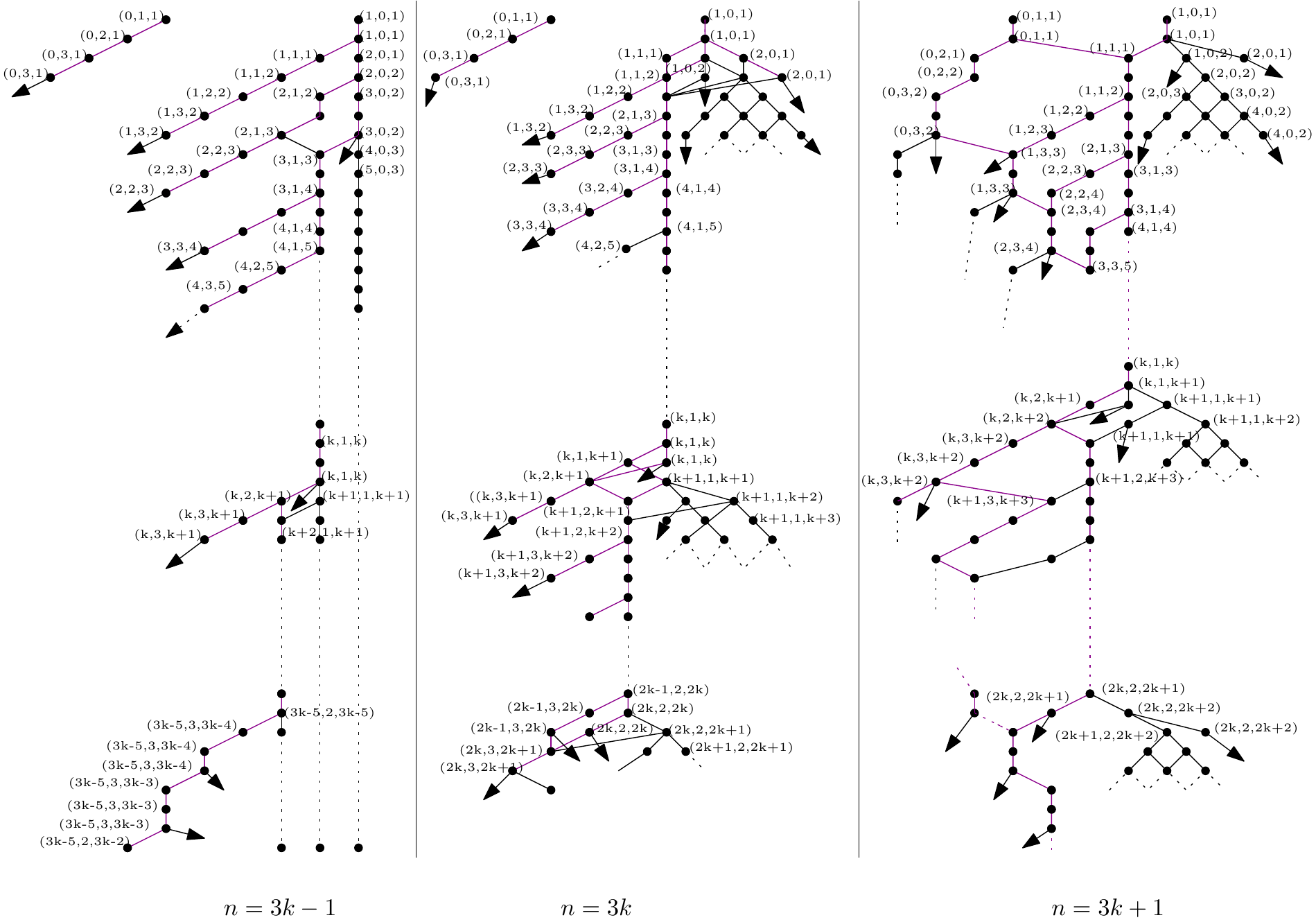}
\caption{ Jets schemes of $H_n$ }
\end{figure}

An embedded resolution for each case is represented on the figure below:

\begin{figure}[H]
\includegraphics[scale=1,height=21cm,width=18cm]{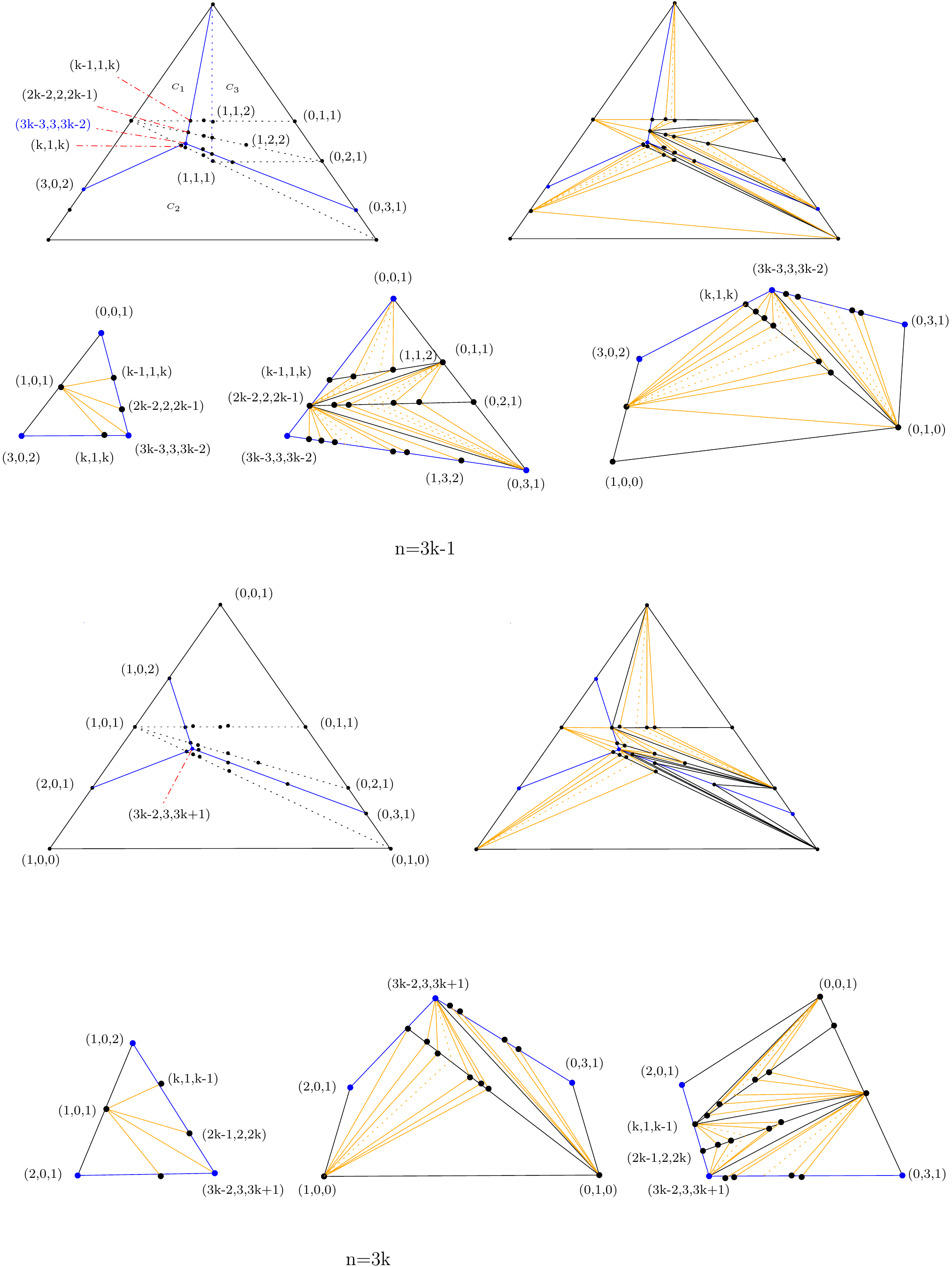}
\caption{ Embedded resolutions of $H_n$ }
\end{figure}

\begin{figure}[H]
\includegraphics[scale=1,height=10cm,width=18cm]{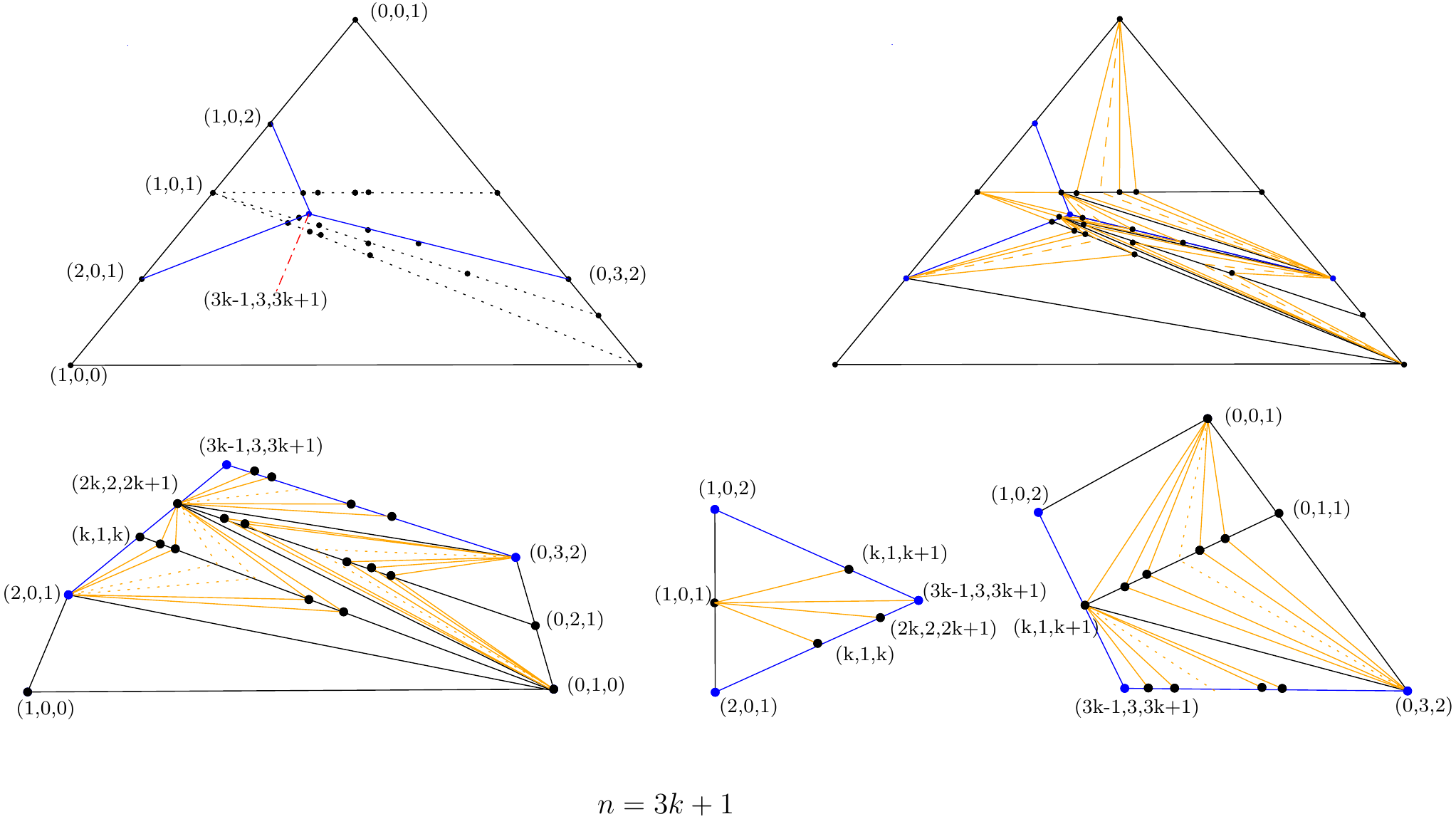}
\caption{ Embedded resolutions of $H_{3k+1}$ }
\end{figure}


\end{document}